\documentclass{amsart}
\usepackage[utf8]{inputenc}
\usepackage{amsmath}
\DeclareMathOperator\acosh{arccosh}
\DeclareMathOperator*{\argmax}{arg\,max}
\usepackage{amsfonts}
\usepackage{amssymb}
\usepackage{amsthm}
\usepackage{graphicx}
\usepackage{epstopdf}
\usepackage{enumerate}
\usepackage{float}
\usepackage{url}
\usepackage{setspace}
\usepackage{setspace}
\usepackage{enumitem}
\usepackage[dvipsnames]{xcolor}
\usepackage[top=1.5cm,bottom=2cm,left=2cm,right=1.5cm]{geometry}
\usepackage[normalem]{ulem}
\usepackage{cancel}

\makeatletter
\@namedef{subjclassname@2020}{\textup{2020} Mathematics Subject Classification}
\makeatother

\begin{document}
\title[Coupled surface diffusion and mean curvature motion]{Coupled surface diffusion and mean curvature motion: axisymmetric steady states with two grains and a hole}

\author[K. Golubkov]{Katrine Golubkov}
\address{Department of Mathematics, Technion-IIT, Haifa 32000, Israel}
\email{k1a9t9i6a@campus.technion.ac.il}
\author[A. Novick-Cohen]{Amy Novick-Cohen}
\address{Department of Mathematics, Technion-IIT, Haifa 32000, Israel}
\email{amync@technion.ac.il}
\author[Y. Vaknin]{Yotam Vaknin}
\address{Racah Institute of Physics, The Hebrew University of Jerusalem, Jerusalem 91904, Israel}
\email{yotam.vaknin@mail.huji.ac.il}

\thanks{The authors wish to acknowledge the support of the Israel Science Foundation (Grant \# 1200/16).
}
\keywords{
 surface diffusion, mean curvature motion,  thin solid films, grain boundaries, stationary states}
\subjclass[2020]{35K46, 35K93, 35Q74, 53E10, 53E40, 74G22, 74K35}
\date{\today}
\begin{abstract}
 The evolution of  two  grains, which lie on a substrate  and are in contact with each other, can be roughly described by a model
 in which the exterior surfaces
of the  grains  evolve by surface diffusion and the  grain boundary, namely the contact surface between  the adjacent  grains, evolves by
motion by mean curvature. We consider an axi-symmetric two grain system, contained within an inert bounding semi-infinite cylinder with a hole along the axis of symmetry. Boundary conditions are imposed
reflecting the considerations of W.W. Mullins, 1958.

We  focus here on the  steady states of this system. At steady state,  the  exterior surfaces  have constant and equal mean curvatures, and
the grain boundary  has zero mean curvature; the exterior surfaces
 are nodoids and the grain boundary surface is  a catenoid. The physical parameters in the model can be expressed via two angles $\beta$ and $\theta_c$, which depend on the surface free energies.
Typically if a steady state solution exists for given values of $(\beta, \theta_c)$, then there exists a continuum
of such solutions. In particular, we prove that there exists a continuum of solutions with $\theta_c=\pi$ for any $\beta \in (\pi/2, \pi)$. 
\end{abstract}

\maketitle

\newcommand{\norm}[1]{\left\lVert#1\right\rVert}
\newcommand\dd{\, \mathrm{d}}
\newcommand\ee{\mathrm{e}}
\newcommand\ess{\mathrm{ess}}
\newcommand\en{\textbf{n}}
\newcommand\vu{\textbf{u}}
\newcommand\vU{\textbf{U}}
\newcommand\vv{\textbf{v}}
\newcommand\Rr{\mathbb{R}}
\newcommand\Nn{\mathbb{N}}
\newcommand{\di}{\mathrm{div}_x}
\newcommand{\intQT}{\int\limits_0^t \!\!\!\int_\Omega}
\newcommand{\intQtau}{\int\limits_0^\hat{\tau} \!\!\!\int_\Omega}
\newcommand{\sgn}{\text{sgn}}
\renewcommand{\theenumi}{\roman{enumi}}   \renewcommand{\labelenumi}{(\theenumi)}

\newtheorem{thm}{Theorem}[section]
\newtheorem{lemma}[thm]{Lemma}
\newtheorem{prop}[thm]{Proposition}
\newtheorem{cor}[thm]{Corollary}
\newtheorem{note}[thm]{Remark}
\newtheorem{claim}[thm]{Claim}

\theoremstyle{definition}
\newtheorem{deff}{Definition}

\newcommand{\Kpurple}[1]{{\color{purple}{#1}}}
\newcommand{\Kgreen}[1]{{\color{green}{#1}}}
\newcommand{\Kblue}[1]{{\color{blue}{#1}}}
\newcommand{\Kmagenta}[1]{{\color{magenta}{#1}}}
\newcommand{\Kred}[1]{{\color{red}{#1}}}

\allowdisplaybreaks

\numberwithin{equation}{section}

\newpage
\section{Introduction}\label{Intro}

The  present paper is inherently interdisciplinary in that we analyse a model designed for the study of certain processes arising in materials science related to the stability of thin solid films;
 while the model and  its analysis are of independent mathematical interest,
   the results from the analysis nevertheless have implications for the original materials science problem.

Thin solid films,  such as  nano-thin metallic and ceramic films, are  commonly used in numerous  applications in electronics, optoelectronics,  sensors, and more recently for anti-bacterial coatings, and
for many applications, stability is a critical concern. Instability issues include wetting, dewetting, and hole formation \cite{Thompson2012,KaplanCWC}; dewetting and hole formation can lead to agglomeration \cite{SrolovitzG}, namely the  break up of regions of the film into
droplets.
 Thin solid films are typically composed of numerous crystals or  grains. Ideally one would like to model systems with  large numbers of grains, but
 to do this faithfully is not an easy task. Moreover, in most realistic physical system, many additional effects are present.
 In terms of modelling, there is a large and growing literature taking into account many effects \cite{Thompson2012,Jiang2019}.
 However, when many effects are included, the analysis frequently relies on numerics and various not overly controlled assumptions and approximations.
 Accordingly, we follow a somewhat different path; we adopt a rather simplistic model in a rather special geometry, but this approach allows us to reach some
 precise and meaningful  conclusions.

 More specifically, we focus  on a system with  two adjacent grains and a hole.
 For simplicity, an axi-symmetric geometry is assumed. The grains are assumed to be supported from below by a substrate and to be contained within an inert cylinder. The region above the grains
 is assumed to be  gaseous or vacuum, and
  the further assumption is made that there is a hole along the axis of symmetry, which exposes the substrate.
 Clearly our two grain model with a hole constitutes a very simplified model for the evolution of a system with a large number of grains and holes,
   but  our conclusions are indicative. For a related numerical model, see e.g. \cite{VEAAJE}.

   Given the geometry described above,     our model for the dynamics of the two grain system can be formulated in terms of a coupled system of partial differential equations, in which  the exterior surfaces of the grains evolve by surface diffusion, and the surfaces of contact between neighboring grains, known as grain boundaries,  evolve by mean curvature motion. Notably,  motion by mean curvature and motion by surface diffusion were both first proposed by the renowned materials scientist, W.W. Mullins \cite{MU6,MU7} in 1956  and 1957, respectively.
The boundary conditions for the system reflect conservation of mass flux, continuity of the chemical potential, and local balance of mechanical forces.

From the  viewpoint of materials science, our model  is quite simple; possible influences from evaporation, bulk diffusion, magnetic response, elastic effects, crystalline anisotropy are neglected.
However, from a mathematical viewpoint, our model is not at all so simple. Both surface diffusion and motion by mean curvature constitute geometric motions, namely  surface  motions where the dynamics is defined in terms of geometric (coordinate independent) quantities, such as the mean curvature, the normal velocity, the surface Laplacian.  While many properties of mean curvature  motion  have been explored in depth \cite{Bellettini2013}, surface diffusion motion has so far received less attention \cite{PrussS2016}, and the literature in regard to the coupling of these motions is considerably thinner \cite{NC2000,LydiaNC,D2016}.  Given the  increasing interest  within the mathematical community in geometric motions in general, which includes, for example,  Ricci and Willmore flows, and given  the implications of these coupled motions in the context of materials science, the literature and overall interest in motions such as these coupled motions clearly stands to grow.


In terms of  the mathematical analysis of the coupled system, clearly it is desirable to address standard issues, such as existence, uniqueness, and regularity.  These issues have been addressed for
certain similar related systems, such as surface diffusion in axisymmetric geometries \cite{axiSimonett_1,axiSimonett_2} or for the coupled motion in the plane of curves evolving by surface diffusion \cite{GarckeNC}, but not to our knowledge for our system per se.
 Such an analysis should be possible and not too problematic;  we demonstrate in \S\ref{parametric formulation} that the coupled motion can be formulated in terms of the motion of three parametric curves and four intersection points
 in a meridian cross-section,
  and within this framework it should be possible  to utilize quasi-linear parabolic theory \cite{GarckeNC,PrussS2016} to establish existence and regularity.
  However, we prefer to sidestep this effort for the moment;  we choose, rather,  to assume the existence of sufficiently smooth solutions and  to explore the qualitative behavior of solutions, in particular to explore some of
  the qualitative features of the steady states.

For the coupled motions,  certain special types of solutions have been considered; for example in \cite{Kanel_1, Kanel_2}, proofs are given of the existence of certain travelling wave solutions for the coupled motions,
 including possibly  ``non-classical'' travelling wave solutions, which are not describable as a graph of a function. Moreover, formal asymptotic calculations indicate that the coupled motions can be viewed as a ``hydrodynamic limit'' of coupled Allen-Cahn/Cahn-Hilliard equations with degenerate mobility, \cite{NC2000,LydiaNC}. For further discussion of approximate travelling wave solutions, spherically capped hexagonal solutions,
  grain annihilation, and more, see  \cite{AnnaZ,D2016,DMNV2017}.  For mean curvature motion and surface diffusion, there has been some emphasis on qualitative geometric properties of solutions;  for example,  do initially embedded solutions   remain embedded?  under what conditions is asymptotic convexity guaranteed? See e.g. \cite{Blatt} and references therein; Similar issues would be interesting to explore within the framework of the coupled motions, however this is beyond the scope of the present paper.

 In the present paper, the geometry of our model and some of the physical background for the problem is outlined in \S\ref{Backg}; in particular axi-symmetry is assumed throughout.   In \S\ref{parametric formulation},  a parametric formulation for the problem in terms of evolving curves and
  intersection points in the meridian cross-section is derived. Assuming  existence and sufficient regularity, conservation of mass  (volume) and energy dissipation for the dynamic problem is demonstrated.
  Afterwards,  we  focus on the set of steady states of the system, and in \S\ref{stationary}, we reduce the characterization of the possible steady states to a parametric description of how the surfaces of constant mean curvature can be combined in accordance with the
  various constraints and boundary conditions. More specifically, within the context of our axi-symmetric setting, this corresponds  to gluing  together two nodoids and a catenoid at designated angles,
  and constraining these surfaces to intersect the substrate or the inert bounding cylinder at prescribed angles. Somewhat similar concerns arise in the context of some capillary problems, \cite{Finn,EHLM}.
  The results here build on the results of Vadim Derkach \cite{D2016}, and  extend an earlier study of a two grain system without a hole, where the characterization of the steady states could be reduced to a parametric coupling of spheres, nodoids, and catenoids \cite{MND2018}.
   We show in Lemma \ref{angle_order} and Lemma \ref{zerothetac}  that  when $\beta=\pi/2$ or $\theta_c=0,$  there do not exist steady state solutions.
   Necessary and sufficient parametric conditions are prescribed for the existence of steady states for $(\beta,\theta_c)\in(\pi/2,\pi)\times(0,\pi],$ and a few solution profiles which have been numerically calculated are portrayed.
   In \S\ref{asymptotic analysis}, we prove that there exists a continuum of
steady solutions with $\theta_c=\pi$ for any $\beta \in (\pi/2, \pi)$, where $\theta_c$ is the contact angle of the inner grain with the substrate and $\beta$ is the angle extended between each of the exterior grain
surfaces and the grain boundary. Our numerical results indicate that similar properties should hold for a wider range of angles.

\section{Background}\label{Backg}
Following the discussion in \S\ref{Intro}, we focus on an axi-symmetric system containing two grains and a hole which lie
within an inert bounding orthogonal  cylinder on a planar substrate.  We assume  that the axis of symmetry  coincides
with the $z$-axis, and that the planar substrate which bounds the system from below is  located at $z=0$. The  region above the grains and the hole is assumed to be occupied by a gas or vacuum.
We shall refer to the grain which lies closer to the $z$-axis as {grain$_1$}, or the \textbf{inner grain}, and to the grain which lies closer to the bounding cylinder as {grain$_2$}, or the \textbf{outer grain}.

Thus, the system contains two grain volumes, which we shall refer to as $V_1$ and $V_2$, respectively, and accordingly we can define the total grain volume as
\begin{equation} \label{total_volume}
\mathbb{V}=V_1 + V_2;
\end{equation}
in \S\ref{parametric formulation},  $\mathbb{V}$ will be shown to be time independent.

In the geometry under consideration, we may identify 7 surfaces. These include the  \textbf{exterior surfaces} of the inner grain and the outer grain, which we shall refer to as $S_1$ and $S_2$, respectively, which separate these grains from the gas or vacuum.  We shall assume that the two grains are in contact
along a \textbf{grain boundary} surface, which we refer to as $S_3$. We view the substrate surface to be partitioned into the region below the hole, which is exposed to the gas or vacuum above, and into the region
which  is covered by the grains; these surface regions are referred to as $S_{4}$ and $S_{5}$, respectively.  Similarly, we can envision the surface of the inert bounding cylinder which lies
above $z=0$ to be partitioned into a region, $S_{6},$  which is in contact with the outer grain and which extends down to the substrate, and into the complement of this surface, $S_{7}$.
The total free energy of the system is given by the  sum of the areas of the various surfaces, weighted by their respective  surface free energies.
Accordingly we let  $\gamma_{ex}$ denote  the surface free energy (free energy/area) of the exterior surfaces, and we let $\gamma_{gb}$ denote the surface free energy of the grain boundary.
Similar we let $\gamma_{gas}$ denote the surface free energy of $S_{4}$, and we let $\gamma_{grs}$ denote the surface free energy of $S_{5}$.
Since we are assuming that the bounding cylinder is inert, we shall assume that the surface free energy of the cylinder vanishes for both $S_{6}$ and its complement $S_{7}$.
  Thus we obtain the following expression for the total free energy of the system,
\begin{equation} \label{totalfE}
\mathbb{E}= \gamma_{ex} (A_{1} + A_{2})  + \gamma_{gb} A_{3} + \gamma_{gas} A_{4} + \gamma_{grs} A_{5},
\end{equation}
where   $A_{i}$ denotes the area of surface $S_i$, for $i=1,...,7;$  note that $\mathbb{E}$ takes into account only surface free energy contributions,
 as other possible energetic contributions have been neglected.
 The energy $\mathbb{E}$ may be time dependent, due to the possible time dependence of the areas $A_i$.
  Throughout, the surfaces $S_i$, $i=1,...,7,$ will be assumed  to be  sufficiently regular hypersurfaces, in accordance with the discussion in
 the Introduction.

\bigskip
Moreover, in accordance with the discussion in the Introduction, the  exterior surfaces, $S_1$, $S_2$ are taken  to evolve by \textbf{motion by surface diffusion} \cite{MU7},
 and the grain boundary surface, $S_3$ is taken to evolve by \textbf{motion by mean curvature} \cite{MU6}. To define these motions,
 for $i \in \{1,2,3\},$ we let
$V_{n_i}$ denote the normal velocity of surface $S_i$, $V_{n_i} := \vec{V}_i \cdot \widehat{N}_i$,   where $\vec{V}_i$  is the surface velocity of $S_i$ and $\widehat{N}_i$ is the outer unit normal to $S_i$.
 Here the outer normals $\widehat{N}_i$ are defined in accordance with the orientation implied by the parametrizations of the surfaces $S_i$, to be introduced shortly.
 Similarly, for $i\in \{1,2,3\},$ we let $H_i$ denote the mean curvature of surface $S_i$, where $H_i$ refers to the average of the principle curvatures.
 In terms of these definitions, motion by mean curvature of the grain boundary may be expressed as
 \begin{equation}\label{meancur}
V_{n_3}=\mathcal{A}H_3,
\end{equation}
 where the coefficient $\mathcal{A}>0$ is  known in the materials science literature as the reduced mobility, whose  dimensions are $[\mathcal{A}]=\frac{L^2}{T}$.
 To formulate motion by surface diffusion of the exterior surfaces, $S_1,$ $S_2$, we let
 $\Delta_s$ denote the Laplace-Beltrami operator, and since $S_1$, $S_2$ have been assumed to be smooth hypersurfaces, $\Delta_s$ may be expressed as \cite{DMNV2017}
\begin{equation}
\Delta_s:=\sum_{j=1}^{2}\frac{\partial^2}{\partial{s_j}^2},
\end{equation}
where $s_j,$ $j=1,2$ are locally defined arc length parametrizations in the direction of the principal curvatures \cite{BO}.
Accordingly, we may formulate the motion by surface diffusion of the exterior surfaces as
\begin{equation}\label{surdiff}
V_{n_i}=-\mathcal{B}\Delta_s H_i,\quad i=1,2,
\end{equation}
where $\mathcal{B}>0$ denotes a kinetic coefficient, known in the materials science literature as the Mullins'  coefficient, whose dimensions are $[\mathcal{B}]=\frac{L^4}{T}$.

\bigskip
Before prescribing boundary and initial conditions, let us describe the geometry of the system which we wish to study  in a bit more detail. Our system is contained in the  semi-infinite  cylinder $\Upsilon$,
\begin{equation} \label{cylinder}
\Upsilon = \{ \;(r,z,\psi)\; | \; 0 \le r \le R, \; 0 \le z, \; 0 \le \psi < 2\pi\; \};
\end{equation}
we  adopt the normalization
\begin{equation}
R=1.\label{R1}
\end{equation}
Since the system under consideration is axi-symmetric, the surfaces
$S_i$, $i=1,...,7$ which were mentioned above and which correspond to hypersurfaces in $\Upsilon$, are $\psi$-independent, and thus determined by the curves
 $\Gamma_i$, $i=1,...,7$ which correspond to their respective projections in the meridian cross-section,
\begin{equation} \label{meridian}
\mathcal{M} =   \{ \;(r,z)\; | \; 0 \le r \le 1, \; 0 \le z \;\}.
\end{equation}

Given our geometric framework, we can distinguish four points of intersection of these curves,
\begin{enumerate} \roman{enumiv}
\item $(r_1^\ast, 0),$  the point of intersection of $\Gamma_1 \cap \Gamma_{4} \cap \Gamma_{5}$, where $\Gamma_1$ meets the substrate,
\item $(1, z_2^\ast),$ the point of intersection of $\Gamma_2 \cap \Gamma_{6} \cap \Gamma_{7}$, where $\Gamma_2$ meets the inert bounding cylinder,
\item $(r_3^\ast, 0),$  the point of intersection of $\Gamma_3 \cap \Gamma_{5}$, where $\Gamma_3$ meets the substrate,
\item $(\bar{r}, \bar{z}),$ the point of intersection of $\Gamma_1 \cap \Gamma_2 \cap \Gamma_3$.
\end{enumerate}
In counting these points, we are inherently assuming that initially and throughout the evolution of the system,
these points of intersection exist, and constitute {the only points of intersection}; in particular that there are no self-intersections and that the hole remains exposed. These assumptions can be understood to be reasonable for some, but not all, initial
curves \cite{Blatt}. This implies in particular the assumption that the outer grain
is actually in contact with the inert boundary of the cylinder.
These assumptions imply in particular that
\begin{equation} \label{constraints_intersections}
0 < r_1^\ast < r_3^\ast<1, \quad 0<z_2^\ast, \quad 0<\bar{r} <1, \quad 0<\bar{z}.
\end{equation}
Under these assumptions, it follows that since the substrate and the boundary of the cylinder are fixed and do not move, the two grain system configuration
can be specified in terms of the three curves $\Gamma_i,$ $i=1,2,3,$ and the four intersection points, listed above, which may all be possibly time dependent.
Given that the four points of intersection  uniquely determine the end points of the curves $\Gamma_i$, $i=1,2,3,$ (assumed to be continuous and rectifiable),
at any given time we may parametrize each of the three curves in terms of  $\alpha \in [0,1],$ with $\alpha=0$ corresponding to
$(r_1^\ast, 0)$ for
$\Gamma_1,$ to  $(\bar{r}, \bar{z})$ for $\Gamma_2,$ and to $(r_3^\ast, 0)$ for
$\Gamma_3$, and with $\alpha=1$ corresponding to $(\bar{r}, \bar{z})$ for  $\Gamma_1$ and for $\Gamma_3$, and to $(1, z_2^\ast)$ for  $\Gamma_2.$
 Since this paper is primarily devoted to the study of the steady states, we refrain for the most part from indicating time dependence explicitly.
The unique point $(\bar{r}, \bar{z}),$ where the three curves $\Gamma_i,$ $i=1,2,3,$ intersect, is often referred to as the
\textbf{triple junction}. Note  that  the ``points'' of intersection in $\mathcal{M}$,  correspond to ``circles'' of intersection on the surfaces $S_i$ in $\Upsilon$.

\bigskip

In discussing the configuration and in particular the curves $\Gamma_i$, $i=1,2,3,$ it shall be at times convenient later  to introduce  angle descriptions
rather than to use the $\alpha$ parametrizations. However, with regard to the $\alpha$  parametrizations of the curves, we may define them more explicitly by requiring that
\begin{equation} \label{equidistribution}
((r_{i\alpha})^2 + (z_{i\alpha})^2)_\alpha =0, \quad \alpha \in [0,1], \quad\quad i=1,2,3,
\end{equation}
where $r_{i\alpha}=\frac{\partial}{\partial \alpha}r_i,$  $z_{i\alpha}=\frac{\partial}{\partial \alpha}z_i$. This implies that the $\alpha$ parametrizations \color{black} correspond to normalized arc-length parametrizations, which were seen to be helpful in numerical implementations  \cite{PanWetton,D2016,DerkachJEPE,DerkachScripta}. Given the parametric description above, and given knowledge of at least one of the endpoints on each curve, for $i=1,2,3$
the curve $\Gamma_i$    may  be uniquely prescribed in terms of the angles $\theta_i(\alpha),$ $\alpha\in [0,1],$ where $\theta_i(\alpha)$ is the angle between
$\hat{\tau}_i(\alpha),$ the unit tangent to $\Gamma_i$ at $\alpha$ in the direction of the parametrization, and the (positive) $r$-axis. In particular, this approach allows us to identify
the angles $\theta_i(0)$, $\theta_i(1)$, $i=1,2,3,$ at the various points of intersection. We shall adopt the notation
\begin{equation} \label{tjangles}
\bar{\theta}_1= \theta_1(1), \quad \bar{\theta}_2=\theta_2(0), \quad \bar{\theta}_3=\theta_3(1),\end{equation}
at the triple junction, in analogy with the notation $(\bar{r}, \bar{z})$ introduced above.

 \subsection{Boundary conditions}

Following Mullins 1958 \cite{MU8,Kanel_0}, we identify four types of boundary conditions: persistence, balance of mechanical forces, continuity of the chemical potential, and balance of mass flux,
which are described in detail below.

\smallskip\par\noindent\textbf{Persistence:} Persistence  here refers to the assumption that the configuration described  above is maintained throughout the evolution; namely that the system remains axi-symmetric and can be described
in the meridian cross-section via the three curves $\Gamma_i$, $i=1,2,3,$ and the four intersection points which were enumerated above. From a physical viewpoint, this assumption is reasonable, since
if one of the curves were, for example, to detach from one of the intersection points, the physical interpretation of the configuration would be lost. Our assumption of regularity outlined earlier  implies that (\ref{constraints_intersections}) should hold continuously; if one of the conditions ceases to hold, this can be viewed
as a break-down of the requirements and indicates termination  of the evolution under consideration. In particular, we assume that  the hole, the exterior surfaces and the grain boundary surface  persist, and that no other holes or grains  form.

\smallskip\par\noindent\textbf{Balance of mechanical forces:} This condition is often referred to as \textbf{Herring's law} or \textbf{Young's law}, depending on whether solids or fluids are being discussed.
It reflects the assumption that the  forces at the intersection
points are determined by the local atomic forces in its direct vicinity, and typically allows local force equilibrium to be attained before the surrounding surfaces have reached equilibrium.
Sometimes this condition is relaxed, but it
constitutes a simple and common assumption. This condition in a general setting may be stated as follows: given  $n\in \mathbb{N}$ surfaces that meet freely along a given curve of intersection,
\begin{equation} \label{Herring}
\sum_{j=1}^n \gamma_j \hat{\tau}_j =0,
\end{equation}
where for $j=1,2,\ldots,n$, $\gamma_j$  denotes  the surface free energy of the $j^{th}$ surface, and $\hat{\tau}_j$ denotes the unit tangent to the $j^{th}$ surface which is orthogonal to the curve of intersection and points
outwards from the intersection point. In the present context, this condition applies at the triple junction. At the constrained intersection points $(r_1^\ast, 0),(1, z_2^\ast),(r_3^\ast, 0),$ which lie on either the inert cylinder or the substrate which are taken to be fixed, balance of the tangential mechanical forces can be assumed, namely
\begin{equation} \label{HerringConstrained}
\hat{t}\cdot\left(\sum_{j=1}^n \gamma_j \hat{\tau}_j \right)=0,
\end{equation}
where $\hat{t}$ is a unit tangent vector to the fixed surface at the point of contact in the meridian cross section.
 \par Thus, taking into account the geometry and the surface free energies \cite{KaplanCWC} and given that $\gamma_1=\gamma_2=\gamma_{ex}$, $\gamma_3=\gamma_{gb},$  (\ref{Herring}) implies that
 \begin{equation}\label{beta1}
 -\bar{\theta}_1+\bar{\theta}_3  = \beta,
 \end{equation}
 \begin{equation}\label{beta2}
 \bar{\theta}_2-\bar{\theta}_3 + \pi = \beta,
 \end{equation}
 where $\frac{\pi}{2} \le \beta:=\arccos({-\gamma_{gb}}/({2\gamma_{ex}})) \le \pi$,  $0\le \gamma_{gb}/\gamma_{ex} \le 2.$
 The  case in which $\beta=\pi$ and $ \gamma_{gb}= 2\gamma_{ex}$ corresponds to the singular case of complete wetting at the triple junction \cite{KaplanCWC,LydiaNC},  and we shall limit our considerations  to the range
 \begin{equation}
 \frac{\pi}{2}\le \beta <\pi, \quad \beta:=\arccos({-\gamma_{gb}}/({2\gamma_{ex}})),\quad 0\le \gamma_{gb} < 2\gamma_{ex}. \label{gammagamma}
 \end{equation}
 Note that (\ref{gammagamma}) implies  that
 \begin{equation}\label{2xy}
 \cos(\beta)\le 0, \quad
 \sin(\beta)>0.
 \end{equation}
 At the constrained intersection points, (\ref{HerringConstrained}) implies that
\begin{equation}\label{inex0}
\theta_1(0)=\theta_c:=\arccos((\gamma_{gas}-\gamma_{grs})/{\gamma_{ex}}),
\end{equation}
where, since the parametric curve $\Gamma_1=\Gamma_1(\alpha)$ lies in $\mathcal{M}$,
\begin{equation}
0 \le \theta_c \le \pi,    \label{thetac}
\end{equation}
inertness of the bounding cylinder and (\ref{HerringConstrained}) imply that
\begin{equation}
\theta_2(1)=0, \label{outex1}
\end{equation}
and planarity of the substrate and (\ref{HerringConstrained}) imply that
\begin{equation}
\theta_3(0)={\pi}/{2}. \label{gb0}
\end{equation}

\smallskip\par\noindent\textbf{Continuity of the chemical potential:} Following \cite{MU7}, continuity of the chemical potential  is assumed, and within the framework of our problem,  the
chemical potential is proportional to the mean curvature along the exterior surfaces. In the spirit of the notation introduced earlier, this implies that
\begin{equation} H_1(1)=H_2(0), \label{cont_chem_pot}
\end{equation}
 where for $i=1,2,$  $H_i(\alpha)$ refers to the mean curvature of $S_i$ evaluated along  $\Gamma_i$  at $\alpha$.

\smallskip\par\noindent\textbf{Balance of the mass flux along the exterior surfaces:}
 In formulating the balance of mass flux along the exterior surfaces, we  neglect possible mass flux contributions from along the grain boundary onto exterior surfaces at the triple junction; the accuracy of this assumption depends on the physical system under consideration.
 Following \cite{MU8,Kanel_0}, the mass flux along the exterior surface can be
identified as
\begin{equation} \label{mass_flux}
J = - \nabla_s \mu, \quad \mu = H, \end{equation}
where $\nabla_s$ is the surface gradient.  The parametric formulation of this boundary condition is given in \S\ref{parametric formulation}.

\subsection{Initial conditions}\label{initiall conditions}
It seems reasonable to prescribe initial conditions  which would allow our  configurational assumptions to be maintained for all time, permitting steady state to be asymptotically attained.
Thus it would seem sensible to choose initial conditions within the geometric framework outlined above,  which satisfy the constraints and  boundary conditions.
In particular, it would seem  reasonable to consider, for example, appropriately defined axi-symmetric perturbations of some steady states.
An alternative approach could be to be guided by the notion of a critical radius $r_c$ \cite{SSafran2,REFF}, according to which holes whose radius is smaller than $r_c$ tend to close, and holes whose radius is larger than $r_c$ tend to grow, where $r_c$ can be roughly determined via energetic considerations, given a film with average height $h$, \cite{WangJiangBaoS,DMNV2017}.
Within this context,  assuming that $0 < r_c  \ll 1=R,$ we might define an initial configuration in which $r_1^\ast =2r_c$ and  $r_3^\ast=2 r_c +h$, where $h$ roughly corresponds to the average height of both grains, taking care to satisfy the boundary conditions for some set of angles $\theta_c$, $\beta,$  chosen in accordance with  (\ref{thetac}),  (\ref{gammagamma}).

\vspace{5mm}
\section{The Dynamic Problem Formulation in Terms of Parameteric Curves}\label{parametric formulation}

Following the  discussion earlier, we assume that  the solutions are axi-symmetric throughout the evolution, and that they belong to the class of configurations described in \S\ref{Backg}.
This implies in particular that they can be described by their profile
 in the meridian cross-section,
\begin{equation} \label{meridian1}
\mathcal{M} =   \{\; (r,z)\; | \; 0 \le r \le 1, \; 0 \le z \; \},
\end{equation}
where the normalization (\ref{R1})  has been adopted,
via three curves and four points of intersection. Since the problem here  is  dynamic,
we  consider the  three curves $\Gamma_i,$ $i=1,2,3,$ to be time dependent,
where $\Gamma_1(t)$ and $\Gamma_2(t)$ refer, respectively, to the inner and outer exterior surfaces, and $\Gamma_3(t)$ refers to the grain boundary surface,
and the four evolving points which were defined in \S\ref{Backg}, namely
\begin{equation} \label{points}
 \hbox{i})\quad (r_1^\ast(t), 0),\quad\quad  \hbox{ii})\quad (1, z_2^\ast(t)),\quad\quad \hbox{iii})\quad (r_3^\ast(t), 0),\quad\quad \hbox{iv})\quad(\bar{r}(t), \bar{z}(t)),
 \end{equation}
  and we impose the constraints  (\ref{constraints_intersections}).
We assume throughout that the exterior surfaces and the grain boundary curves are sufficiently regular, and moreover that the evolution persists within the class of configurations
considered for  times $t \in [0,T),$ $T>0,$ with some implicit emphasis on the case $T=\infty$.
If our assumptions and constraints  cease to exist at some given positive time, we can take this as an upper limit on the stopping time for the evolutionary problem.
%
%
%
%
%
\subsection{Equations of motion} \label{eqns_motion}
\smallskip
For $t\in [0, T)$ and $i=1,2,3,$ we set
\begin{equation} \label{parG3}
\Gamma_i(t)=\{(r_i(\alpha, t),\, z_i(\alpha,t)) \, | \, \alpha \in [0, 1] \, \},
\end{equation}
where the parametrizations correspond to the normalized arc-length parametrizations (\ref{equidistribution}), which satisfy
\begin{equation}\label{ugs}
(r_{i\alpha} \, r_{i\alpha\alpha} + z_{i\alpha} \, z_{i\alpha\alpha})(\alpha,t) =0, \quad \alpha \in [0,1],\quad t \in [0,T), \quad\quad i=1,2,3.
\end{equation}
 Setting 
 \begin{equation} \label{tau}
\hat{\tau}_i(\alpha,t)=\left.\Biggl(\frac{r_{i\alpha}}{\sqrt{r_{i\alpha}^2+z_{i\alpha}^2}},\frac{z_{i\alpha}}{\sqrt{r_{i\alpha}^2+z_{i\alpha}^2}}\Biggr)\right|_{(\alpha,t)}, \quad\quad \alpha \in [0,1],\quad t \in [0,T), \quad\quad i=1,2,3,\end{equation}
we may prescribe the following angle descriptions of the curves $\Gamma_i(t),$ $i=1,2,3,$ 
\begin{equation} \label{angles}
\cos(\theta_i(\alpha,t))= \hat{\tau}_i(\alpha,t) \cdot \hat{e}_r,\quad \sin(\theta_i(\alpha,t))=\hat{\tau}_i(\alpha,t) \cdot \hat{e}_z,
\quad i=1,2,3, \quad \alpha \in [0,1],\quad t \in [0,T),
\end{equation}
  \color{black} where $\theta_i(0,t) \in (-\pi, \pi]$, $t\in [0,T),$ $i=1,2,3,$ and $\hat{e}_r=(1,0),\ \hat{e}_z=(0,1)$ are the unit vectors in the directions of the (positive) $r$-axis and $z$-axis, respectively.  
Assuming that
$\theta_i(\alpha,t) \in C([0,1] \times [0,T)),$  $i=1,2,3,$ and since    self-intersecting configurations have been ruled out, in looking at the
boundary conditions given below in  (\ref{r1z1})--(\ref{r3z3}), it seems reasonable to assume that $\theta_i(\alpha,t) \in (-\pi, \pi]$, for $\alpha\in (0,1],$ $t\in [0,T),$  and $i=1,2,3.$


From  (\ref{parG3}), we obtain (see e.g. \cite{D2016}) that for $\Gamma_i(t),$ $i=1,2,3$, the normal velocity and the mean curvature may be expressed
in accordance with the description $\Gamma(t)=\{(r(\alpha, t),\, z(\alpha,t)) \, | \, \alpha \in [0, 1] \, \}$  as
\begin{equation}
V_n=\frac{r_{\alpha}z_t-z_{\alpha}r_t}{\sqrt{r_{\alpha}^2+z_{\alpha}^2}},\quad
H=\frac{r_{\alpha}z_{\alpha\alpha}-z_{\alpha}r_{\alpha\alpha}}{2(r_{\alpha}^2+z_{\alpha}^2)^{\frac{3}{2}}}+
\frac{z_\alpha}{2r\sqrt{r_{\alpha}^2+z_{\alpha}^2}},\quad \alpha \in [0,1],\quad t \in [0,T), \label{H3}
\end{equation}
where $V_n=V_n(\alpha,t)$ and $H=H(\alpha,t)$.
From (\ref{surdiff}), (\ref{ugs})--(\ref{H3}), we obtain   for the evolution of the exterior surfaces curves,  $\Gamma_1(t)$ and $\Gamma_2(t)$, that
\begin{equation}
r_{i\alpha}z_{it}-z_{i\alpha}r_{it}=\frac{-\mathcal{B}(r_iH_{i\alpha})_\alpha}{r_i\sqrt{r_{i\alpha}^2+z_{i\alpha}^2}},\quad i=1,2,\quad \alpha\in (0,1),\quad t \in (0,T), \label{va1}
\end{equation}
see \cite[\S1.13.1]{D2016} for details. \\
Similarly, from (\ref{meancur}), and (\ref{ugs})--(\ref{H3}), we obtain  for
the evolution of the grain boundary, $\Gamma_3(t)$, that
\begin{equation}
r_{3\alpha}z_{3t}-z_{3\alpha}r_{3t}=\mathcal{A}\sqrt{r_{3\alpha}^2+z_{3\alpha}^2}H_3,\quad \alpha\in(0,1),\quad t\in (0,T). \label{vb1}
\end{equation}

\subsection{Boundary conditions}\label{boundary_conditions}

We now formulate the boundary conditions discussed in \S\ref{Backg}, in terms of the parametric description given above.

The notion of persistence introduced in \S\ref{Backg} implies the following constraints on the endpoints of the curves,
\begin{eqnarray}
(r_1(0,t),z_1(0,t))&=&(r_1^*(t),0),\quad\;\; (r_1(1,t),z_1(1,t))=(\bar{r}(t),\bar{z}(t)),\quad\ t\in [0,T),\label{r1z1}\\
(r_2(0,t),z_2(0,t))&=&(\bar{r}(t),\bar{z}(t)),\quad (r_2(1,t),z_2(1,t))=(1,z_2^*(t)),\quad\;\;\ t\in [0,T),\label{r2z2}\\
(r_3(0,t),z_3(0,t))&=&(r_3^*(t),0),\quad\;\; (r_3(1,t),z_3(1,t))=(\bar{r}(t),\bar{z}(t)),\quad\ t\in [0,T),\label{r3z3}\\
0<r_1^\ast <r_3^\ast <1,&&\, 0<z_2^\ast,\quad 0<\bar{r}<1,\quad 0<\bar{z},\quad t\in [0,T). \label{r4z4}
\end{eqnarray}

The  balance of mechanical forces implies that
\begin{eqnarray}
&\hat{\tau}_1(0,t) \cdot \hat{e}_r=\cos(\theta_c),\quad\quad\quad\quad\quad\quad\quad   \theta_1(0,t)=\theta_c, \quad\quad\quad\quad  &t \in [0,T),\label{6bc}\\
&\hat{\tau}_2(1,t) \cdot \hat{e}_z=0,\quad\quad\quad\quad\quad\quad   \theta_2(1,t)=0, \quad\quad\quad\quad  &t \in [0,T),\label{7bc}\\
&\hat{\tau}_3(0,t) \cdot \hat{e}_r=0,\quad\quad\quad\quad\quad\quad   \theta_3(0,t)=\frac{\pi}{2}, \quad\quad\quad\quad  &t \in [0,T),\label{8bc}\\
&\hat{\tau}_1(1,t)\cdot \hat{\tau}_3(1,t)=\cos(\beta),\quad\quad  -\theta_1(1,t)+ \theta_3(1,t)=- \bar{\theta}_1(t)+ \bar{\theta}_3(t)=\beta, \quad\quad  &t \in [0,T),\label{9abc}\\
 &-\hat{\tau}_2(0,t)\cdot \hat{\tau}_3(1,t)=\cos(\beta),\quad\quad  \theta_2(0,t)-\theta_3(1,t)=\bar{\theta}_2(t)- \bar{\theta}_3(t)=-\pi+\beta, \quad\quad  &t \in [0,T),\label{9bc}
 \end{eqnarray}
 where $\theta_c$ satisfies (\ref{inex0}), (\ref{thetac}) and $\beta$ satisfies (\ref{gammagamma}). 
 
Continuity of the chemical potential along the exterior surface, (\ref{cont_chem_pot}), implies that
\begin{equation}
H_1(1,t)=H_2(0,t),\ t \in  [0,T). \label{10bc}
\end{equation}

Balance of the mass flux,  (\ref{mass_flux}), along the exterior surface, with no incoming or outgoing mass flux from the grain boundary, or into the substrate or out through the bounding cylinder, implies that  
\begin{equation} \label{bca}
H_{1\alpha}(0,t)=0, \quad
\left.\frac{H_{1\alpha}}{\sqrt{r_{1\alpha}^2 + z_{1\alpha}^2}}\right|_{(1,t)}=\left.\frac{H_{2\alpha}}{\sqrt{r_{2\alpha}^2 + z_{2\alpha}^2}}\right|_{(0,t)},
   \quad H_{2 \alpha}(1,t)=0, \quad t \in [0,T).
\end{equation}

  The assumption that the hole, the exterior surfaces and the grain boundary surface persist and that no additional
  holes or grains form, implies the following constraints:
\begin{eqnarray}
0<r_1(\alpha,t),\,r_3(\alpha,t) <1,\quad &\alpha\in[0,1],\ &t \in [0,T),\label{C11}\\
0<r_2(\alpha,t)< 1,\quad &\alpha\in[0,1),\ &t \in [0,T), \label{C11a}\\
0< z_1(\alpha,t),\,z_3(\alpha,t),\quad &\alpha\in(0,1],\ &t\in [0,T), \label{l4}\\
0<z_2(\alpha,t),\quad &\alpha\in[0,1],\ &t\in [0,T).\label{l4a}
\end{eqnarray}
 Lastly we recall that we do not allow self-intersection of the curves or the appearance of additional intersections.

\subsection{Initial conditions} \label{initial conditions} For $i=1,2,3,$
\begin{equation}\label{ic}
(r_{i}(\alpha,0),z_{i}(\alpha,0))=(r^i_{init}(\alpha),z^i_{init}(\alpha)),\quad \alpha \in [0,1],
\end{equation}
where  $r^i_{init}$, $z^i_{init}$ are prescribed sufficiently smooth  functions which satisfy the constraints and boundary conditions.
\subsection{In summary} \label{problem formulation} The complete dynamic problem formulation is given in (\ref{parG3})--(\ref{ic}) above.

\vspace{5mm}
\subsection{The Total Energy and Volume of the Physical System}\label{volume and energy}

 We now demonstrate two rather intuitive  properties of the evolutionary system,
 namely  that the total energy is non-increasing and that the total volume is conserved.

Recalling (\ref{totalfE}), the total free energy  $\mathbb{E}(t)$ of the system may be expressed as
\begin{equation}\nonumber
\mathbb{E}(t)=\gamma_{gas}A_{gas}(t)+\gamma_{grs}A_{grs}(t)+\gamma_{ex}A_{ex}(t)+\gamma_{gb}A_{gb}(t),\ t\ge 0.
\end{equation}
    Using (\ref{R1}), (\ref{inex0}), in our geometry  $A_{gas}+A_{grs}=\pi R^2=\pi$,   $A_{gas}(t)=\pi r_1^{*2}(t),$ and $\gamma_{gas}-\gamma_{grs}=\gamma_{ex}\cos(\theta_c).$ Hence
\begin{equation}
\mathbb{E}(t)=\pi\gamma_{grs}+\gamma_{ex}(A_{ex}(t)+\cos(\theta_c)\pi r_1^{*2}(t))+\gamma_{gb}A_{gb}(t),\label{etot}
\end{equation}
where 
\begin{equation} \label{aex}
A_{ex}(t)=2\pi\sum_{i=1,2}{\int_{0}^{1}{r_i\sqrt{r_{i\alpha}^2+z_{i\alpha}^2}d\alpha}}, \quad
A_{gb}(t)=2\pi\int_{0}^{1}{r_3\sqrt{r_{3\alpha}^2+z_{3\alpha}^2}d\alpha}.
\end{equation}

\begin{lemma}\label{dedt} {Assuming  sufficient  regularity,
the total free energy,  $\mathbb{E}(t)$, is non-increasing with respect to time.}
\end{lemma}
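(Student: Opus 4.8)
The plan is to differentiate $\mathbb{E}(t)$ in the form (\ref{etot})--(\ref{aex}) and to recognise the result as a sum of non-positive dissipation integrals. The computational heart is the first variation of surface area for an axisymmetric surface with \emph{moving} endpoints. Differentiating each area integral $2\pi\int_0^1 r_i\sqrt{r_{i\alpha}^2+z_{i\alpha}^2}\,\mathrm{d}\alpha$ in $t$, integrating by parts in $\alpha$ to take the derivatives off $r_{it}$ and $z_{it}$, and using the expressions (\ref{H3}) for the normal velocity $V_{n_i}$ and the mean curvature $H_i$, one is led to a bulk term $-\int_{S_i}2H_iV_{n_i}\,\mathrm{d}A=-2\pi\int_0^1 2H_iV_{n_i}\,r_i\sqrt{r_{i\alpha}^2+z_{i\alpha}^2}\,\mathrm{d}\alpha$ together with endpoint contributions $\big[\,2\pi r_i\,(r_{it},z_{it})\cdot\hat\tau_i\,\big]_{\alpha=0}^{\alpha=1}$, where $\hat\tau_i$ of (\ref{tau}) appears as the surface conormal pointing out of $S_i$ along its boundary circles (namely $+\hat\tau_i$ at $\alpha=1$, $-\hat\tau_i$ at $\alpha=0$). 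Weighting the three curves by $\gamma_{ex},\gamma_{ex},\gamma_{gb}$ and adding the substrate contribution $2\pi\gamma_{ex}\cos(\theta_c)\,r_1^\ast\dot r_1^\ast$ coming from the term $\gamma_{ex}\cos(\theta_c)\pi r_1^{\ast 2}$ in (\ref{etot}), one splits $\dot{\mathbb{E}}$ into a bulk part and a boundary part.

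For the bulk part I would substitute the equations of motion. By (\ref{meancur}) (equivalently (\ref{vb1})) the grain boundary contributes $-2\mathcal{A}\gamma_{gb}\int_{S_3}H_3^2\,\mathrm{d}A$, and by (\ref{surdiff}) (equivalently (\ref{va1})) each exterior surface contributes $2\mathcal{B}\gamma_{ex}\int_{S_i}H_i\Delta_s H_i\,\mathrm{d}A$; a second integration by parts, now Green's identity on the surface with boundary $S_i$, turns the latter into $-2\mathcal{B}\gamma_{ex}\int_{S_i}|\nabla_s H_i|^2\,\mathrm{d}A$ plus a boundary flux term proportional to $H_i\,\partial_{\hat\tau_i}H_i$ at the two endpoint circles, with $\partial_{\hat\tau_i}H_i=H_{i\alpha}/\sqrt{r_{i\alpha}^2+z_{i\alpha}^2}$. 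Thus, up to boundary terms,
\[
\frac{\mathrm{d}\mathbb{E}}{\mathrm{d}t}=-2\mathcal{A}\gamma_{gb}\int_{S_3}H_3^2\,\mathrm{d}A\;-\;2\mathcal{B}\gamma_{ex}\sum_{i=1,2}\int_{S_i}|\nabla_s H_i|^2\,\mathrm{d}A\;\le\;0 ,
\]
since $\mathcal{A},\mathcal{B}>0$ and $0\le\gamma_{gb}<2\gamma_{ex}$ by (\ref{gammagamma}) (which forces $\gamma_{ex}>0$); the sign convention in the first variation is exactly the one that makes the prescribed motions (\ref{meancur}) and (\ref{surdiff}) energy dissipating.

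It then remains to check that every boundary contribution vanishes. The area-variation endpoint terms at the constrained points are killed one by one: at $(r_3^\ast,0)$ and $(1,z_2^\ast)$ the endpoint moves tangentially along the substrate, resp.\ the cylinder, so $(r_{it},z_{it})\cdot\hat\tau_i$ vanishes there by (\ref{8bc}) (as $\theta_3(0)=\pi/2$), resp.\ (\ref{7bc}) (as $\theta_2(1)=0$); at $(r_1^\ast,0)$ the term equals $-2\pi\gamma_{ex}\cos(\theta_c)\,r_1^\ast\dot r_1^\ast$ by (\ref{6bc}) and cancels exactly against the substrate contribution. At the triple junction the three area-variation terms assemble into $2\pi\bar r\,(\dot{\bar r},\dot{\bar z})\cdot\big(\gamma_{ex}\hat\tau_1(1)-\gamma_{ex}\hat\tau_2(0)+\gamma_{gb}\hat\tau_3(1)\big)$, and the vector in parentheses is exactly the left side of Herring's balance (\ref{Herring}) for $\gamma_1=\gamma_2=\gamma_{ex}$, $\gamma_3=\gamma_{gb}$, hence vanishes by (\ref{beta1})--(\ref{beta2}), equivalently (\ref{9abc})--(\ref{9bc}). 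Finally the diffusive flux terms vanish at $(r_1^\ast,0)$ and $(1,z_2^\ast)$ because $H_{1\alpha}(0,t)=0$ and $H_{2\alpha}(1,t)=0$ by (\ref{bca}), while at the triple junction the two remaining flux terms cancel using the flux-matching identity in (\ref{bca}) together with continuity of the chemical potential (\ref{10bc}), $H_1(1,t)=H_2(0,t)$. With all boundary terms gone, the displayed inequality holds, which is the assertion of the lemma.

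The main obstacle is precisely this boundary bookkeeping: consistently tracking the outward conormal ($+\hat\tau_i$ at $\alpha=1$, $-\hat\tau_i$ at $\alpha=0$) and the $2\pi r$ circumference weights at all six endpoints, and verifying that the geometric boundary data (Herring's law and the three contact-angle conditions) dispose of the area-variation terms while the kinetic boundary data (vanishing mass flux at the substrate and the cylinder, plus flux continuity and chemical-potential continuity at the triple junction) dispose of the surface-diffusion terms. Everything else --- the two integrations by parts and the algebraic rearrangement --- is routine once the first-variation formula is set up. One may also observe that $\dot{\mathbb{E}}=0$ holds exactly when $H_3\equiv 0$ on $S_3$ and $H_1,H_2$ are constant, which anticipates the steady-state characterisation of \S\ref{stationary}.
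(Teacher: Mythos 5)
Your proposal is correct and follows essentially the same route as the paper's proof: differentiate $\mathbb{E}$, integrate by parts to expose the bulk term $-\int 2H_iV_{n_i}\,\mathrm{d}A$ plus conormal endpoint terms, cancel the endpoint terms via the contact-angle conditions (\ref{6bc})--(\ref{8bc}) and Herring's balance (\ref{9abc})--(\ref{9bc}) together with the substrate term, and then integrate by parts once more on the surface-diffusion contribution using (\ref{10bc})--(\ref{bca}) to arrive at the non-positive dissipation integrals. The only difference is presentational --- you phrase the computation intrinsically via $\Delta_s$ and Green's identity on $S_i$, whereas the paper carries it out entirely in the parametric $(\alpha,t)$ coordinates --- and your sign and weight bookkeeping at all six endpoints matches the paper's.
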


\textit{Proof.} Assuming sufficient regularity, we get from (\ref{etot}), (\ref{aex})  that
\begin{equation}
\frac{d\mathbb{E}}{dt}=\gamma_{ex}\left(\frac{dA_{ex}}{dt}+\cos(\theta_c)2\pi r_1^*\frac{dr_1^*}{dt}\right)+\gamma_{gb}\frac{dA_{gb}}{dt},\ t\in (0,T).\label{detot}
\end{equation}
 Letting  $A_1,\,  A_2,\, A_3,$ denote the surface areas of the inner exterior surface, of the outer exterior surface and of the grain boundary, respectively,  
 $$\frac{d {A_i}}{dt}=2\pi\int_{0}^{1}{\Big(r_{it}\sqrt{r_{i\alpha}^2+z_{i\alpha}^2}+r_i\Big(\frac{r_{i\alpha}r_{i\alpha t}+z_{i\alpha}z_{i\alpha t}}{\sqrt{r_{i\alpha}^2+z_{i\alpha}^2}}\Big)\Big)d\alpha}, \quad i=1,2,3, \quad t \in (0,T).$$
 Integration by parts implies that
\begin{equation}\label{ai}
2\pi\int_{0}^{1} \Big( r_i
\Big(\frac{r_{i\alpha} r_{i\alpha t}+z_{i\alpha}z_{i\alpha t}}{\sqrt{r_{i\alpha}^2+z_{i\alpha}^2}}\Big)\Big) d\alpha
=- 2\pi\int_{0}^{1} \Big(r_{it}\Big(\frac{r_ir_{i\alpha}}{\sqrt{r_{i\alpha}^2+z_{i\alpha}^2}}\Big)_\alpha+z_{it}\Big(\frac{r_iz_{i\alpha}}{\sqrt{r_{i\alpha}^2
+z_{i\alpha}^2}}\Big)_\alpha\Big)d\alpha +a_i,  \quad i=1,2,3,
\end{equation}
where
\begin{equation}\nonumber
a_i=2\pi\Big(\frac{r_ir_{i\alpha}r_{it}+r_iz_{i\alpha}z_{it}}{\sqrt{r_{i\alpha}^2+z_{i\alpha}^2}}\Big)|_{\alpha=0}^{\alpha=1}.
\end{equation}
It follows from (\ref{gammagamma}), (\ref{9abc})-(\ref{9bc}) that
$\cos\left(\bar{\theta}_
1\right)+ \gamma_{gb}/\gamma_{ex}\cos\left(\bar{\theta}_3\right)=\cos\left(\bar{\theta}_2\right),$
$\sin\left(\bar{\theta}_1\right)+\gamma_{gb}/\gamma_{ex}\sin\left(\bar{\theta}_3\right)=\sin\left(\bar{\theta}_2\right),$
and hence
\begin{equation}\label{ai2}
\sum_{i=1,2}{\gamma_{ex} a_i}+ {\gamma_{gb} a_3}=-2\pi\gamma_{ex}\cos(\theta_c)r_1^*\frac{dr_1^*}{dt}.
\end{equation}
From (\ref{detot})--(\ref{ai2}),
\begin{align}\nonumber
\begin{split}
\begin{array}{ll}
\frac{d\mathbb{E}}{dt}&=2\pi\gamma_{ex}\sum_{i=1,2}\int_{0}^{1}\Big(r_{it}\sqrt{r_{i\alpha}^2+z_{i\alpha}^2}
-r_{it}\Big(\frac{r_ir_{i\alpha}}{\sqrt{r_{i\alpha}^2+z_{i\alpha}^2}}\Big)_\alpha-z_{it}\Big(\frac{r_iz_{i\alpha}}{\sqrt{r_{i\alpha}^2+z_{i\alpha}^2}}\Big)_\alpha\Big)d\alpha\\
\quad &\; + \;
2\pi \gamma_{gb} \int_{0}^{1}\Big(r_{3t}\sqrt{r_{3\alpha}^2+z_{3\alpha}^2}
-r_{3t}\Big(\frac{r_3r_{3\alpha}}{\sqrt{r_{3\alpha}^2+z_{3\alpha}^2}}\Big)_\alpha-z_{3t}\Big(\frac{r_3z_{3\alpha}}{\sqrt{r_{3\alpha}^2+z_{3\alpha}^2}}\Big)_\alpha\Big)d\alpha
.\label{detot2}
\end{array}
\end{split}
\end{align}
Using now (\ref{ugs}) and (\ref{H3})--(\ref{vb1}), as well as (\ref{10bc}) and (\ref{bca}), 
\begin{align}\nonumber
\begin{split}
\begin{array}{ll}
\frac{d\mathbb{E}}{dt}&=4\pi\sum_{i=1,2}\gamma_{ex}\int_{0}^{1}r_iH_i(r_{it}z_{i\alpha}-z_{it}r_{i\alpha})d\alpha  +
4\pi\gamma_{gb}\int_{0}^{1}r_3 H_3(r_{3t}z_{3\alpha}-z_{3t}r_{3\alpha})d\alpha\\[1.5ex]
&=4\pi\Big(\sum_{i=1,2}\gamma_{ex}\int_{0}^{1}r_iH_i\Big(\frac{\mathcal{B}(r_iH_{i\alpha})_\alpha}{r_i\sqrt{r_{i\alpha}^2+z_{i\alpha}^2}}\Big)d\alpha
-\gamma_{gb}\int_{0}^{1}\mathcal{A}r_3H_3^2\sqrt{z_{3\alpha}^2+r_{3\alpha}^2}d\alpha\Big)\\[1.5ex]
&=-4\pi\left(\sum_{i=1,2}\gamma_{ex}\int_{0}^{1}\frac{\mathcal{B}r_iH_{i\alpha}^2}{\sqrt{r_{i\alpha}^2+z_{i\alpha}^2}}d\alpha+\gamma_{gb}\int_{0}^{1}\mathcal{A}r_3H_3^2\sqrt{z_{3\alpha}^2
+r_{3\alpha}^2}d\alpha\right)\le0,\quad t>0.
\end{array}
\end{split}
\end{align}
\hfill$\square$\\\\
\begin{lemma}\label{dvdt} {Assuming sufficient regularity, the total volume $\mathbb{V}(t)$ enclosed by the two grains is time
independent.} 
\end{lemma}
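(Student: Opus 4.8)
The plan is to compute $\frac{d\mathbb{V}}{dt}$ directly, expressing each grain volume $V_i$ as a solid of revolution over its meridian cross-section, and to show that all the resulting boundary and flux terms cancel exactly as they did in the energy computation of Lemma \ref{dedt}. First I would write $\mathbb{V}(t)=V_1(t)+V_2(t)$ and express each $V_i$ as a surface integral over the moving boundary curves: since the grain $\mathrm{grain}_i$ is bounded by (pieces of) $\Gamma_1,\Gamma_2,\Gamma_3$, the substrate $z=0$, and the bounding cylinder $r=1$, the signed volume can be written via the divergence theorem (or directly by Pappus-type formulas) as a line integral of the form $\pi\int r^2\, dz$ or $\frac{2\pi}{3}\oint(\ldots)$ around the boundary of each meridian region. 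The contributions from the fixed substrate and the fixed cylinder wall are constant in time and drop out under differentiation, so $\frac{d\mathbb{V}}{dt}$ reduces to a sum of integrals along the three moving curves $\Gamma_i(t)$, $i=1,2,3$.

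Next I would differentiate under the integral sign. Using the parametrization (\ref{parG3}) and writing the ``$r^2\,dz$''-type integrand in $\alpha$, differentiation in $t$ produces terms in $r_{it}$ and $z_{it}$; after an integration by parts in $\alpha$ to move $\alpha$-derivatives off the time-derivative factors, the bulk integrand should collapse — using the expression (\ref{H3}) for $V_n$ — to something proportional to the normal velocity, i.e. $\frac{dV_i}{dt}$ picks up a term $\pm 2\pi\int r_i V_{n_i}\,\sqrt{r_{i\alpha}^2+z_{i\alpha}^2}\,d\alpha$ from each bounding curve, which is precisely $\pm\int_{S_i} V_{n_i}\,dA$, the rate of volume swept by $S_i$. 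This is the geometrically transparent identity $\frac{d}{dt}(\text{volume})=\int_{\partial}V_n\,dA$ with appropriate signs; I would derive it cleanly rather than quote it, to keep the argument self-contained.

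It then remains to show these swept-volume contributions sum to zero. The grain boundary $S_3$ is shared by $\mathrm{grain}_1$ and $\mathrm{grain}_2$ with opposite outward normals, so its contribution to $\frac{dV_1}{dt}$ cancels its contribution to $\frac{dV_2}{dt}$ outright. The exterior surfaces $S_1,S_2$ each bound only one grain, so their contributions are $-2\pi\int_0^1 r_i V_{n_i}\sqrt{r_{i\alpha}^2+z_{i\alpha}^2}\,d\alpha$; substituting the surface-diffusion law in the parametric form (\ref{va1}), $r_i(r_{i\alpha}z_{it}-z_{i\alpha}r_{it}) = -\mathcal{B}(r_iH_{i\alpha})_\alpha/\sqrt{r_{i\alpha}^2+z_{i\alpha}^2}$, this becomes $\mathcal{B}\int_0^1 (r_iH_{i\alpha})_\alpha\,d\alpha/\sqrt{\cdots}$; integrating this total derivative gives a pure boundary term $\mathcal{B}\,[\,r_iH_{i\alpha}/\sqrt{r_{i\alpha}^2+z_{i\alpha}^2}\,]_0^1$, wait — more carefully one integrates $(r_iH_{i\alpha})_\alpha$ against $d\alpha$ directly to get $r_iH_{i\alpha}\big|_0^1$, and then the flux boundary conditions (\ref{bca}) — namely $H_{1\alpha}(0,t)=0$, $H_{2\alpha}(1,t)=0$, and the matching condition at the triple junction $\frac{H_{1\alpha}}{\sqrt{r_{1\alpha}^2+z_{1\alpha}^2}}\big|_{(1,t)}=\frac{H_{2\alpha}}{\sqrt{r_{2\alpha}^2+z_{2\alpha}^2}}\big|_{(0,t)}$ together with $r_1(1,t)=r_2(0,t)=\bar r(t)$ — force the remaining endpoint contributions from $S_1$ and $S_2$ to cancel each other. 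Hence $\frac{d\mathbb{V}}{dt}=0$.

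The main obstacle I anticipate is bookkeeping of signs and of which curve bounds which region: getting the orientation conventions consistent between the volume formula, the outer-normal convention fixed after (\ref{parG3}), and the signs in (\ref{va1})–(\ref{vb1}), so that the grain-boundary terms genuinely cancel and the two exterior-surface endpoint terms combine correctly via (\ref{bca}). This is exactly the step where the energy proof of Lemma \ref{dedt} used (\ref{10bc}) and (\ref{bca}); here only (\ref{bca}) and the coincidence of the triple-junction point are needed, since no $\gamma$-weights or Herring-law relations enter. A secondary, purely technical point is writing the solid-of-revolution volume correctly when a single meridian region is bounded by several of the $\Gamma_i$ plus fixed walls; I would handle this by applying the divergence theorem to the vector field $\frac{1}{3}(x,y,z)$ (or equivalently $\frac12 r^2\hat e_r$ type reductions) over each grain and discarding the time-independent fixed-wall pieces, which keeps the computation short and avoids case analysis on the shape of the region.
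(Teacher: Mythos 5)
Your proposal is correct and follows essentially the same route as the paper: reduce $\mathbb{V}$ to the line integral $2\pi\sum_{i=1,2}\int_0^1 z_i r_i r_{i\alpha}\,d\alpha$ over the two exterior curves (the grain-boundary and fixed-wall contributions drop out exactly as you argue), differentiate and integrate by parts in $\alpha$, substitute (\ref{va1}) to obtain an integral of $-\mathcal{B}(r_iH_{i\alpha})_\alpha/\sqrt{r_{i\alpha}^2+z_{i\alpha}^2}$, and kill the resulting endpoint terms with the flux conditions (\ref{bca}) together with $r_1(1,t)=r_2(0,t)=\bar r(t)$. The only differences are cosmetic: the paper quotes the volume formula from \cite{D2016} rather than deriving it via the divergence theorem, and in your ``more carefully'' aside the factor $1/\sqrt{r_{i\alpha}^2+z_{i\alpha}^2}$ should be retained in the boundary terms (it is $\alpha$-independent by (\ref{ugs}) but differs between the two curves), since that is exactly the form in which (\ref{bca}) delivers the cancellation at the triple junction.
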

\textit{Proof.}
Following \cite{D2016}, the total volume may be expressed as
\begin{equation}
\mathbb{V}(t)=2\pi\sum_{i=1,2}\int_0^1z_ir_ir_{i\alpha}d\alpha.\label{volume}
\end{equation}
 Differentiating $\mathbb{V}(t)$, then integrating by parts and using (\ref{ugs}), (\ref{va1}), and (\ref{bca}), gives us that for $t>0$,
$$\frac{d \mathbb{V}}{dt}=2\pi\sum_{i=1,2}\int_0^1(z_{it}r_{i\alpha}-z_{i\alpha}r_{it})r_i d\alpha
=2\pi\sum_{i=1,2}\int_0^1 \Bigg(\frac{-\mathcal{B}\left(r_iH_{i\alpha}\right)_\alpha}{\sqrt{r_{i\alpha}^2+z_{i\alpha}^2}}\Bigg)d\alpha=\left.2\pi\sum_{i=1,2}
\frac{-\mathcal{B}r_iH_{i\alpha}}{\sqrt{r_{i\alpha}^2+z_{i\alpha}^2}}\right|_{\alpha=0}^{\alpha=1}=0.$$

\hfill$\square$

\vspace{5mm}
\section{Steady State Solutions}\label{stationary}


\vspace{1.5mm}
\subsection{The Steady State Problem}\label{mc_ss}

The steady state problem formulation, which we summarize below, follows  upon setting $r_{i\, t}(\alpha,t)\equiv z_{i\, t}(\alpha,t)\equiv 0,$ $i=1,2,3,$
(i.e.; $\frac{d}{dt}\Gamma_{i}(t)\equiv 0,$ $i=1,2,3,$) in (\ref{ugs})--(\ref{l4a}).

From   (\ref{C11})--(\ref{l4a}),
\begin{equation} \label{0r1}
0 < r_1(\alpha),\ r_3(\alpha)<1,\quad  \alpha \in [0,1],\qquad 0< r_2(\alpha)< 1,\quad \alpha \in [0,1),
\end{equation}
\begin{equation} \label{0z}
0<z_1(\alpha),\ 0<z_3(\alpha),\quad\alpha\in(0,1],\qquad 0<z_2(\alpha), \quad \alpha\in[0,1],
\end{equation}
and from (\ref{r1z1})--(\ref{r3z3}),
\begin{equation} \label{bc_S1}
\begin{array}{lll}
(r_1,z_1)(0)=(r_1^\ast,0), \quad  &(r_2,z_2)(0)=(\bar{r},\bar{z}), \quad   &(r_3,z_3)(0)=(r_3^\ast,0),\\[1.5ex]
(r_1,z_1)(1)=(\bar{r},\bar{z}), \quad  &(r_2,z_2)(1)=(1,z_2^\ast), \quad   &(r_3,z_3)(1)=(\bar{r},\bar{z}).
\end{array}
\end{equation}
Note that $r_1^\ast,$  $r_3^\ast,$ $\bar{r}$, and $z_2^\ast$, $\bar{z}$ are now time independent, and (\ref{0r1})--(\ref{bc_S1}) together with (\ref{r4z4}) imply  that
\begin{equation} \label{r0z0}
0 < r_1^\ast < r_3^\ast <1, \quad 0<z_2^\ast,\quad  0< \bar{r}<1, \quad 0<   \bar{z},
\end{equation}
\begin{equation} \label{endsne}
(r_i, z_i)(0) \ne (r_i, z_i)(1), \quad i=1,2,3.
\end{equation}
Upon integrating  (\ref{ugs}) with respect to $\alpha$, it  follows from (\ref{endsne})  that
\begin{equation} \label{const_S}
r_{i \alpha}^2 + z_{i \alpha}^2=c_i,\quad \alpha\in [0,1],\quad\quad 0<c_i\in \mathbb{R}, \quad i=1,2,3.
\end{equation}

Recalling that $\mathcal{A},\, \mathcal{B}>0,$ we obtain from  (\ref{const_S}) and  (\ref{va1}), (\ref{vb1}),  that
\begin{equation}\nonumber
(r_iH_{i\alpha})_\alpha=0,\quad i=1,2,\quad H_3=0,\quad\alpha\in (0,1).
\end{equation}
Integrating the first equation above twice with respect to $\alpha,$  we obtain by using (\ref{10bc})--(\ref{bca}) that
\begin{equation}\label{conH}
H_1(\alpha)=H_2(\alpha)=\lambda\in \mathbb{R},\quad H_3(\alpha)=0,\quad \alpha\in[0,1],
\end{equation}
where  $\lambda$ remains to be determined. From (\ref{ugs}), (\ref{H3}),
\begin{eqnarray}
H_i(\alpha)&=&\frac{r_{i \alpha}z_{i \alpha\alpha}-z_{i \alpha}r_{i \alpha\alpha}}{2(r_{i \alpha}^2+z_{i \alpha}^2)^{\frac{3}{2}}}+\frac{z_{i \alpha}}{2r_i \sqrt{r_{i \alpha}^2+z_{i \alpha}^2}},\quad \alpha\in [0,1],  \quad i=1,2,3,   \label{H_S}\\[1.5ex]
0&=&z_{i\alpha}z_{i\alpha\alpha}+r_{i\alpha}r_{i\alpha\alpha},\quad \alpha\in [0,1],  \quad i=1,2,3.\label{V_S3}
\end{eqnarray}

From (\ref{const_S}) and the angle conditions, (\ref{6bc})--(\ref{9bc}),
\begin{eqnarray}
r_{1\alpha}(0)=c_1^{1/2}\cos(\theta_c), \quad {z_{2\alpha}}(1)={r_{3\alpha}}(0)=0,\quad\quad \theta_1(0)=\theta_c, \quad \theta_2(1)=0, \quad \theta_3(0)={\pi}/{2},  \label{bc_S3}\\[1ex]
{r_{1 \alpha}}(1)\cdot {r_{3 \alpha}}(1) +
{z_{1 \alpha}}(1)  \cdot {z_{3 \alpha}}(1)=c_1^{1/2} c_3^{1/2} \cos(\beta), \quad\quad - \bar{\theta}_1+ \bar{\theta}_3=\beta, \label{bc_S4}\\[1ex]
 {r_{2 \alpha}}(0) \cdot {r_{3 \alpha}}(1) + {z_{2 \alpha}}(0) \cdot {z_{3 \alpha}}(1)=-c_2^{1/2} c_3^{1/2} \cos(\beta),\quad\quad
\bar{\theta}_2 - \bar{\theta}_3=-\pi+\beta, \label{bc_S5}
\end{eqnarray}
where from (\ref{tjangles}), $\bar{\theta}_1=\theta_1(1),$   $\bar{\theta}_2=\theta_2(0),$ $\bar{\theta}_3=\theta_3(1).$

Lastly we recall our assumption that the curves $\Gamma_i,$ $i=1,2,3,$ do not self-intersect, that their only intersection occurs at the triple junction and that their only contact points with the substrate and the inert cylinder occur at: $(r_1^\ast,0),(r_3^\ast,0)$ and $(1,z_2^\ast)$. Note that (\ref{0r1}) implies that the hole remains open.

\subsection{Analytic Expressions for the Steady State Solutions}\label{analytic solutions}


  From (\ref{conH}), it follows that at steady state in our axi-symmetric geometry,   the two exterior surfaces have constant and equal mean curvatures and that the mean curvature of the grain boundary vanishes.
 In 1841 Delaunay \cite{Del41,EHLM} proved  that the only surfaces of revolution with constant mean curvature were the surfaces obtained by rotating the roulettes of the conics, namely,  planes orthogonal to the axis of symmetry, cylinders, spheres,  catenoids,  unduloids, and nodoids; among these surfaces,  the catenoid and the plane satisfy $H=0$, while the remaining surfaces satisfy $H=const\ne0$. We shall now see that the boundary
 conditions imply that in fact the exterior surfaces are nodoids and the grain boundary surface is a catenoid.


From  (\ref{conH})--(\ref{H_S}), and recalling (\ref{0r1}),(\ref{bc_S1}), and (\ref{const_S}),
\begin{equation} \label{HL0}
\lambda=\frac{r_{i\alpha}z_{i\alpha\alpha}-z_{i\alpha}r_{i\alpha\alpha}}{2(r_{i\alpha}^2+z_{i\alpha}^2)^{\frac{3}{2}}}+\frac{z_{i\alpha}}{2r_i\sqrt{r_{i\alpha}^2+z_{i\alpha}^2}}, \quad i=1,2, \quad
0=\frac{r_{3\alpha}z_{3\alpha\alpha}-z_{3\alpha}r_{3\alpha\alpha}}{2(r_{3\alpha}^2+z_{3\alpha}^2)^{\frac{3}{2}}}+\frac{z_{3\alpha}}{2r_3\sqrt{r_{3\alpha}^2+z_{3\alpha}^2}}, \quad \alpha \in [0,1],
\end{equation}
which may be written as
\begin{equation} \nonumber
 2\lambda r_i r_{i\alpha}=\frac{d}{d\alpha}\left(\frac{r_i z_{i\alpha}}{\sqrt{r_{i\alpha}^2+z_{i\alpha}^2}}\right),\quad  i=1,2,\quad
  0=\frac{d}{d\alpha}\left(\frac{r_3 z_{3\alpha}}{\sqrt{r_{3\alpha}^2+z_{3\alpha}^2}}\right),\quad \alpha \in [0,1].
\end{equation}
Integrating with respect to $\alpha,$ recalling (\ref{0r1}),(\ref{bc_S1}) and dividing by $r_i$,
\begin{equation}\label{su}
\sin(\theta_i)=\frac{z_{i\alpha}}{\sqrt{r_{i\alpha}^2+z_{i\alpha}^2}}=\lambda r_i+\frac{C_i}{r_i},\quad  i=1,2,\quad
\sin(\theta_3)=\frac{z_{3\alpha}}{\sqrt{r_{3\alpha}^2+z_{3\alpha}^2}}=\frac{C_3}{r_3},\quad \alpha \in [0,1],
\end{equation}
where $C_i\in \mathbb{R}$, $i=1,2,3,$ are constants of integration, and   $\theta_i=\theta_i(\alpha)$     corresponds  to the angle description given in (\ref{angles}) for the curve $(r_i(\alpha), z_i(\alpha))$, for $i=1,2,3,$  and $\alpha\in [0,1]$.

Let us first consider the grain boundary, $(r_3(\alpha), z_3(\alpha)),$ in a bit more detail.
From (\ref{bc_S1}), (\ref{bc_S3}), (\ref{su}),
\begin{equation} \label{AC3}
\theta_3(0)=\frac{\pi}{2},\quad \sin(\theta_3(0))= \left.\frac{z_{3\alpha}}{\sqrt{r_{3\alpha}^2+z_{3\alpha}^2}}\right|_{\alpha=0}=1=\frac{C_3}{r_3^\ast},
\end{equation}
and thus $C_3=r_3^\ast.$ Setting $A:=r_3^\ast$ to simplify notation, with $0<A<1$ by (\ref{0r1}), (\ref{su})--(\ref{AC3}) yield that
\begin{equation} \label{AC3a}
 \sin(\theta_3(\alpha))= \frac{z_{3\alpha}(\alpha)}{\sqrt{r_{3\alpha}^2(\alpha)+z_{3\alpha}^2(\alpha)}}=\frac{A}{r_3(\alpha)},\quad \alpha \in [0,1].
\end{equation}
 Noting that (\ref{AC3a}) implies that $A r_3^{-1}(\alpha)\le 1$ for $\alpha \in [0,1],$ we
get using (\ref{0r1}) that
 \begin{equation} \label{cincrease3}
  0<A \le r_{3}(\alpha) <1,\quad z_{3 \alpha}(\alpha) > 0,\quad \alpha\in[0,1],
  \end{equation}
  and hence that $r_{3 \alpha}(\alpha) \ge 0$ in a positive neighborhood of $\alpha =0.$ If $r_{3 \alpha}(\alpha)$ vanishes initially and in
  a positive neighborhood of $\alpha=0,$ then (\ref{const_S}), (\ref{HL0}) imply that $r_{3 \alpha \alpha}(\alpha)>0$ in this neighborhood, which yields
  a contradiction. So $r_{3 \alpha}(\alpha)>0$ in some positive neighborhood of $\alpha =0.$ If it does not hold that $r_{3 \alpha}(\alpha)>0$ for all $\alpha \in
  (0,1]$, then a similar contradiction arises in considering the smallest value of $\alpha$ at which $r_{3 \alpha}(\alpha)$ vanishes.
  Therefore,
  \begin{equation} \label{cincrease4}
   r_{3 \alpha}(\alpha) > 0,\quad \alpha\in(0,1],
  \end{equation}
  and  using  (\ref{AC3a}) we may conclude that
  \begin{equation} \label{cincrease5}
  0< \frac{r_{3\alpha}(\alpha)}{\sqrt{r_{3\alpha}^2(\alpha)+z_{3\alpha}^2(\alpha)}} =\cos(\theta_3(\alpha))=\frac{\sqrt{r_3^2(\alpha)-A^2}}{r_3(\alpha)},\quad \alpha\in(0,1].
  \end{equation}
   Differentiating (\ref{AC3a})  with respect to $\alpha$ and using  (\ref{cincrease5}) yields, moreover, that
  \begin{equation} \label{cincrease6}
   \theta_{3 \alpha}(\alpha) < 0,\quad \alpha\in(0,1].
  \end{equation}

  From (\ref{AC3a}), (\ref{cincrease3}), and (\ref{cincrease5}),
 \begin{equation} \label{star}
\frac{dz_3}{dr_3}=\frac{A}{\sqrt{r_3^2-A^2}}>0.\end{equation}
Since $0<A <r_3(\alpha)$ for $\alpha \in (0,1]$ by (\ref{cincrease3})--(\ref{cincrease4}),
   we may integrate (\ref{star}) with respect to $r_3,$ and  obtain using  (\ref{AC3a}) that
\begin{equation}
r_3=A\,\cosh(\frac{z_3}{A})=\frac{A}{\sin(\theta_3)}, \quad z_3=A\,\acosh(\frac{r_3}{A})=A\,\log\left(\frac{1+\cos(\theta_3)}{\sin(\theta_3)}\right).\label{caten1}
\end{equation}
Finally,  from (\ref{bc_S3}), (\ref{cincrease6}), (\ref{caten1}),
\begin{equation} \label{psi3f}
0<\arcsin(A/\bar{r})=\bar{\theta}_3=\theta_3(1) <\theta_3(\alpha) < \theta_3(0)=\frac{\pi}{2},\quad \alpha\in(0,1),
\end{equation}
where 
\begin{equation}\label{E2}
0< \bar{r}:= r_3(1)<1, \quad 0<\bar{z}:=z_3(1)=A\,\acosh(\bar{r}/A).
\end{equation}
From (\ref{star}) and the discussion above,
\begin{lemma} \label{lemma_G3}
Given any $A, \bar{r} \in \mathbb{R},$
\begin{equation} \label{C_G3}
0<A<\bar{r}<1, \end{equation}
a unique solution curve $\Gamma_3$ is defined by (\ref{caten1})--(\ref{E2}) which
satisfies the conditions in (\ref{0r1})--(\ref{bc_S3}) which pertain to $\Gamma_3$, with  $r_3^\ast=A$, $\bar{z}=A\,\acosh(\bar{r}/A)$, $\bar{\theta}_3=\arcsin(A/\bar{r});$  it is monotone increasing and hence non-self-intersecting. Moreover, any solution curve $\Gamma_3$ satisfying
the conditions in (\ref{0r1})--(\ref{bc_S3}) which pertain to $\Gamma_3$, with $r_3(0)=A$ and $r_3(1)=\bar{r}$, satisfies (\ref{C_G3}) and
is monotone increasing and non-self-intersecting.
\end{lemma}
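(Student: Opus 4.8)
The plan is to prove the two assertions of Lemma~\ref{lemma_G3} in turn, observing first that the converse assertion is essentially already contained in the analysis leading to (\ref{AC3})--(\ref{E2}). Indeed, for any solution curve $\Gamma_3$ meeting the stated conditions, that analysis produces $C_3 = r_3^\ast$, the a priori bounds (\ref{cincrease3}), the strict monotonicity (\ref{cincrease4}) of $r_3$ in $\alpha$ on $(0,1]$, and the graph representation (\ref{caten1}). I would therefore first record that, since $r_3$ is strictly increasing with $r_3(0)=A$ and $r_3(1)=\bar r$, one has $A<\bar r$, which combined with (\ref{0r1}) gives $0<A<\bar r<1$, i.e.\ (\ref{C_G3}); and that (\ref{star}) exhibits $\Gamma_3$ as the graph $z_3=A\,\acosh(r_3/A)$ over $r_3\in[A,\bar r]$, hence injective, hence non-self-intersecting. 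This settles the converse and simultaneously shows that any such $\Gamma_3$ must coincide with the curve constructed below.

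For the existence statement I would start from arbitrary $A,\bar r$ with $0<A<\bar r<1$, take the planar arc $\mathcal C=\{(\rho,\,A\,\acosh(\rho/A)):\rho\in[A,\bar r]\}$, which has finite length $\int_A^{\bar r}\rho/\sqrt{\rho^2-A^2}\,d\rho=\sqrt{\bar r^2-A^2}$, and reparametrize it by normalized arc length so that $\alpha\in[0,1]$ runs from $(A,0)$ to $(\bar r,\bar z)$, $\bar z=A\,\acosh(\bar r/A)$. This produces $r_3(\alpha),z_3(\alpha)$ with $r_{3\alpha}^2+z_{3\alpha}^2\equiv c_3=\bar r^2-A^2>0$, which is (\ref{const_S}) and hence (\ref{ugs}), (\ref{V_S3}). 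It then remains to verify, item by item, the $\Gamma_3$-conditions in (\ref{0r1})--(\ref{bc_S3}): the range bounds $0<r_3<1$ on $[0,1]$ and $0<z_3$ on $(0,1]$ are immediate from $\rho\in[A,\bar r]\subset(0,1)$ and monotonicity of $\acosh$; the endpoint conditions (\ref{bc_S1}) hold by construction with $r_3^\ast=A$; the angle conditions $\theta_3(0)=\pi/2$ and $r_{3\alpha}(0)=0$ follow from $\sin\theta_3=A/r_3$ (so $\sin\theta_3(0)=1$) together with $r_3$ non-decreasing; and $\bar\theta_3=\theta_3(1)=\arcsin(A/\bar r)$ follows from the same identity at $\alpha=1$. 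The one genuine computation is $H_3\equiv0$: differentiating the algebraic relation $r_3\sin\theta_3=A$ along the arc-length parametrization recovers the second identity in (\ref{HL0}) with left-hand side $0$ --- equivalently, $z_3=A\,\acosh(r_3/A)$ is the meridian of a catenoid, which has vanishing mean curvature. Uniqueness of the curve determined by (\ref{caten1})--(\ref{E2}) under the arc-length convention is then clear, since starting point, direction and speed are all pinned down.

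The main point requiring care --- rather than a deep obstacle --- is the behaviour at $\alpha=0$, where $\rho=A$ and the graph $\rho\mapsto A\,\acosh(\rho/A)$ has infinite slope: smoothness of the curve $(r_3,z_3)(\alpha)$ near $\alpha=0$ is not visible from the graph representation and has to be read off from the arc-length reparametrization, checking in particular that $r_{3\alpha}(0)=0$ and $z_{3\alpha}(0)=\sqrt{c_3}>0$ consistently with the vertical tangent $\theta_3(0)=\pi/2$. A secondary, purely bookkeeping point is to confirm that every condition in (\ref{0r1})--(\ref{bc_S3}) bearing on $\Gamma_3$ has indeed been accounted for, and that the arc-length constant $c_3$ is allowed to be any positive real, so that the normalization in (\ref{const_S}) imposes no restriction.
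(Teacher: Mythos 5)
Your proposal is correct and follows essentially the same route as the paper: the converse direction is exactly the derivation (\ref{AC3})--(\ref{E2}) preceding the lemma (the contradiction argument giving $r_{3\alpha}>0$, the graph representation (\ref{star})--(\ref{caten1}), and hence monotonicity and (\ref{C_G3})), while the existence direction is the paper's remark that (\ref{caten1})--(\ref{E2}) define a smooth curve of finite length which can be reparametrized by normalized arc length so that (\ref{const_S}), (\ref{V_S3}) hold. Your explicit length computation $\sqrt{\bar r^2-A^2}$ and your attention to the vertical tangent at $\alpha=0$ are correct and merely make explicit what the paper leaves implicit.
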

\noindent Note that (\ref{caten1})--(\ref{E2}) imply that $\Gamma_3$ is smooth and of finite  length, so the parametrization conditions (\ref{const_S}), (\ref{V_S3}) are readily satisfied.

 In order to obtain a steady state solution,
it remains to determine $\Gamma_1,$  $\Gamma_2,$ and to fit them together with $\Gamma_3$ in accordance with the angle conditions (\ref{bc_S4})--(\ref{bc_S5}), and to check the intersections and self-intersections.

\bigskip
Let us now consider $\Gamma_1,$ the inner exterior surface, in greater detail.
Recalling (\ref{0r1})--(\ref{bc_S1}),   we  require that
\begin{equation} \label{sst1}
0<r_1(\alpha)<1,\quad z_1(\alpha)>0, \quad \alpha \in (0,1),
\end{equation}
\begin{equation}\label{sst11}
(r_1,z_1)(0)=(r_1^\ast,0), \quad (r_1,z_1)(1)=(\bar{r},\bar{z}), \quad 0 < r_1^\ast,\, \bar{r}<1,\quad \bar{z}>0.
\end{equation}
Note that (\ref{sst11})  implies  (\ref{endsne}). \color{black}
 From (\ref{thetac}), (\ref{gammagamma}), (\ref{const_S}), (\ref{bc_S3})--(\ref{bc_S4}), and (\ref{psi3f}), we obtain the following  conditions
for $\beta\in [\pi/2,\, \pi),$ $\theta_c \in [0,\, \pi]$,
\begin{equation} \label{c10}
\theta_1(0)=\theta_c, \quad \left.\frac{z_{1\alpha}}{\sqrt{r_{1\alpha}^2+z_{1\alpha}^2}}\right|_{\alpha=0}=\sin(\theta_c)\geq0,\end{equation}
\begin{equation}\label{l42}
 \theta_1(1)=\bar{\theta}_1=\bar{\theta}_3 -\beta,\quad -\pi < \bar{\theta}_1 <0, \quad  \left.\frac{z_{1\alpha}}{\sqrt{r_{1\alpha}^2+z_{1\alpha}^2}}\right|_{\alpha=1}=\sin(\bar{\theta}_1) < 0,
 \end{equation}
and by (\ref{su}),
 \begin{equation}\label{su1}
\sin(\theta_1)=\frac{z_{1\alpha}}{\sqrt{r_{1\alpha}^2+z_{1\alpha}^2}}=\lambda r_1+\frac{C_1}{r_1}, \quad  \alpha \in [0,1].
\end{equation}

\begin{lemma} \label{lemma_al}
For $(r_1(\alpha), z_1(\alpha)),$ $\alpha \in [0,1],$  satisfying (\ref{sst1})--(\ref{su1}), there exists a unique  $r_1=a_\ell \in(0,1)$ such that $z_{1\alpha}=0$ when $r_1=a_\ell$.
\end{lemma}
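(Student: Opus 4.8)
The plan is to read the statement off the first integral (\ref{su1}). Since $r_1(\alpha)>0$ for all $\alpha\in[0,1]$ by (\ref{sst1})--(\ref{sst11}), multiplying (\ref{su1}) through by $r_1$ shows that $z_{1\alpha}(\alpha)=0$ holds precisely when $\sin\theta_1(\alpha)=0$, and this in turn holds precisely when $\lambda\,r_1^2(\alpha)+C_1=0$. Thus the lemma amounts to two claims: that the equation $\lambda r^2+C_1=0$ has a root $r=a_\ell$ lying in $(0,1)$ which is actually attained by $r_1(\cdot)$, and that this value is unique. The uniqueness part will be cheap once we know $\lambda\neq0$, since then $\lambda r^2=-C_1$ has at most one positive root.

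So the first real step is to show $\lambda\neq0$. By (\ref{const_S}) we have $r_{1\alpha}^2+z_{1\alpha}^2=c_1>0$ with $c_1$ constant, so $z_{1\alpha}=\sqrt{c_1}\,\sin\theta_1$ and $\sin\theta_1$ is continuous on $[0,1]$. The boundary data (\ref{c10}) and (\ref{l42}) give $\sin\theta_1(0)=\sin\theta_c\ge0$ and $\sin\theta_1(1)=\sin\bar\theta_1<0$. If $\lambda=0$, then (\ref{su1}) reduces to $\sin\theta_1=C_1/r_1$, which (since $r_1>0$) has the constant sign of $C_1$, or vanishes identically; either way it cannot take both a nonnegative value and a strictly negative value. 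Hence $\lambda\neq0$.

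Next, the intermediate value theorem applied to the continuous function $\sin\theta_1$, using the sign information just recalled, produces some $\alpha_0\in[0,1)$ with $\sin\theta_1(\alpha_0)=0$; here $\alpha_0=0$ is forced only in the borderline cases $\theta_c\in\{0,\pi\}$, and otherwise $\alpha_0\in(0,1)$. At $\alpha_0$ we obtain $\lambda\,r_1^2(\alpha_0)+C_1=0$, hence $-C_1/\lambda=r_1^2(\alpha_0)>0$; set $a_\ell:=r_1(\alpha_0)=\sqrt{-C_1/\lambda}$. Since $r_1(\alpha_0)\in(0,1)$ — by (\ref{sst1}) when $\alpha_0\in(0,1)$, or because $r_1(0)=r_1^\ast\in(0,1)$ by (\ref{sst11}) when $\alpha_0=0$ — we conclude $a_\ell\in(0,1)$. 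Finally, if $z_{1\alpha}(\alpha)=0$ for any $\alpha\in[0,1]$, then $\lambda\,r_1^2(\alpha)+C_1=0$, so $r_1^2(\alpha)=-C_1/\lambda=a_\ell^2$ and, both quantities being positive, $r_1(\alpha)=a_\ell$. This yields both the existence of $a_\ell$ and its uniqueness.

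I expect the only delicate point to be the exclusion of $\lambda=0$ together with the bookkeeping at the borderline contact angles $\theta_c=0,\pi$: there the sign change of $\sin\theta_1$ at $\alpha=0$ is only weak, so the horizontal tangent sits at the substrate contact point itself and $a_\ell=r_1^\ast$; everything else is elementary algebra once the first integral (\ref{su1}) is in hand.
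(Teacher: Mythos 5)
Your proof is correct and follows essentially the same route as the paper: use the sign change of $\sin\theta_1$ between the endpoint conditions (\ref{c10}) and (\ref{l42}) to locate $\alpha_0$ with $z_{1\alpha}(\alpha_0)=0$, then read uniqueness of $a_\ell$ off the first integral (\ref{su1}). Your explicit exclusion of $\lambda=0$ is a welcome extra step that the paper's terse argument leaves implicit (it is only established later, in Lemma \ref{lambda_neg}, that in fact $\lambda<0$).
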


\textit{Proof.} It follows from (\ref{c10}), (\ref{l42}) that
 there exists $\alpha_0 \in[0,1)$   \color{black} for which $z_{1\alpha}(\alpha_0)=0$. 
  By (\ref{su1}),
\begin{equation}
\lambda r_1(\alpha_0)+\frac{C_1}{r_1(\alpha_0)}=0.\label{bal}
\end{equation}
Recalling  (\ref{sst1}), (\ref{sst11}) \color{black}  it follows  that there is a unique value $r_1(\alpha_0)  \in (0,1),$ which we denote by $a_\ell$, which  satisfies (\ref{bal}).
 \hfill$\square$

\bigskip

Setting $r_1(\alpha_0)=a_\ell$ in  (\ref{bal}),  we get that $C_1=-\lambda a_\ell^2,$ and using this result in (\ref{su1}) yields that
\begin{equation}
\sin(\theta_1)=
\frac{z_{1\alpha}}{\sqrt{r_{1\alpha}^2+z_{1\alpha}^2}}=\lambda (r_1-\frac{a_\ell^2}{r_1}), \quad \alpha\in[0,1].\label{f1}
\end{equation}
Note that given $\bar{\theta}_3$, $\bar{r}$, $\bar{z}$, then $\bar{\theta}_1$, ${r_1}(1)$, ${z_1}(1)$ are known from
(\ref{sst11}), (\ref{l42}). But further information regarding $\lambda$ and $a_\ell$ is needed to complete the description of
$\Gamma_1$. In particular, it remains to verify that
 (\ref{sst11})--(\ref{c10}) can be satisfied, namely there exists $r_1^\ast\in (0,1)$ such that $(r_1(0),z_1(0))=(r_1^\ast, 0)$ with
 $\theta_1(0)=\theta_c$, and to consider the possible intersections and self-intersections.

 \smallskip
 Before proceeding further with regard to $\Gamma_1,$ let us first turn
to consider $\Gamma_2$, the outer exterior surface.
Recalling (\ref{0r1})--(\ref{bc_S1}),   we  obtain the conditions
\begin{equation} \label{sst2}
0<r_2(\alpha)<1,\quad z_2(\alpha)>0, \quad \alpha \in [0,1),
\end{equation}
\begin{equation}\label{sst22}
(r_2,z_2)(0)=(\bar{r},\bar{z}), \quad (r_2,z_2)(1)=(1,{z_2^\ast}), \quad 0 < z_2^\ast,
\end{equation}
and (\ref{sst22})  implies  (\ref{endsne}).
From (\ref{bc_S3})--(\ref{bc_S5}),  we obtain the conditions,
\begin{equation}\label{l22}
 \theta_2(0)=\bar{\theta}_2=\bar{\theta}_3 -\pi +\beta=\bar{\theta}_1 -\pi +2\beta,\quad \left.\frac{z_{2\alpha}}{\sqrt{r_{2\alpha}^2+z_{2\alpha}^2}}\right|_{\alpha=0}=\sin(\bar{\theta}_2),
 \end{equation}
\begin{equation} \label{c22}
\theta_2(1)= 0,\quad z_{2\alpha}(1)=0.\end{equation}

\bigskip
From (\ref{su}), (\ref{sst22}), (\ref{c22}),
\begin{equation} \nonumber
\sin(\theta_2(1))=0=\lambda+ {C_2}.\end{equation}
Hence $C_2=-\lambda,$ and using (\ref{su}) we get that
\begin{equation}
\sin(\theta_2)=\frac{z_{2\alpha}}{\sqrt{r_{2\alpha}^2+z_{2\alpha}^2}}=\lambda (r_2-\frac{1}{r_2}), \quad \alpha\in[0,1].\label{f2}
\end{equation}

\smallskip
Looking at (\ref{l22})--(\ref{f2}), we see that $\lambda$, $\bar{r}$ and $\bar{z}$ uniquely determine $\Gamma_2$. We now prove

\begin{lemma} \label{lambda_neg}
{The mean curvature, $\lambda,$ of the exterior surfaces  $(r_i(\alpha), z_i(\alpha)),$ $i=1,2$,  satisfies $\lambda<0$.}\end{lemma}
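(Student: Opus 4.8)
The plan is to exploit the formula \eqref{f2} for $\Gamma_2$ together with the geometric constraints on the outer exterior surface near the triple junction and near the bounding cylinder. The key sign information sits in the relation $\sin(\theta_2)=\lambda(r_2 - 1/r_2)$, valid for $\alpha\in[0,1]$, combined with the fact that $0<r_2(\alpha)<1$ for $\alpha\in[0,1)$ by \eqref{sst2}, so that $r_2 - 1/r_2 < 0$ on $[0,1)$. Hence the sign of $\sin(\theta_2(\alpha))$ on $[0,1)$ is exactly opposite to the sign of $\lambda$ (when $\sin\theta_2\neq 0$). So it suffices to exhibit a single value of $\alpha\in[0,1)$ at which $\sin(\theta_2(\alpha))>0$: that forces $\lambda<0$.

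First I would read off $\bar\theta_2 = \theta_2(0)$. From \eqref{l22}, $\bar\theta_2 = \bar\theta_3 - \pi + \beta$. Now by \eqref{psi3f} we have $0 < \bar\theta_3 < \pi/2$, and by \eqref{gammagamma} we have $\pi/2 \le \beta < \pi$. Hence $\bar\theta_2 = \bar\theta_3 + (\beta - \pi)$ lies in the interval $(\pi/2 - \pi,\ \pi/2 + 0) = (-\pi/2,\ \pi/2)$; more precisely $\bar\theta_2 > \bar\theta_3 - \pi/2 > -\pi/2$ and $\bar\theta_2 < 0 + 0 = 0$ is \emph{not} quite immediate, so I need to be slightly more careful: $\bar\theta_2 < \pi/2 + \beta - \pi = \beta - \pi/2 < \pi/2$. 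The upshot I actually want is $\cos(\bar\theta_2) > 0$ together with a definite sign for $\sin(\bar\theta_2)$. Since $\bar\theta_2\in(-\pi/2,\pi/2)$ we already get $\cos(\bar\theta_2)>0$; to pin the sign of $\sin(\bar\theta_2)$ I would argue that $\bar\theta_2 \ne 0$ (else \eqref{f2} at $\alpha=0$ would force $\lambda(\bar r - 1/\bar r)=0$, impossible since $\bar r \in (0,1)$ unless $\lambda = 0$, and $\lambda=0$ would make $\Gamma_1,\Gamma_2$ planes/spheres and contradict the nodoid boundary data — this needs a short separate argument, or more directly $\lambda=0$ in \eqref{f1} forces $\theta_1\equiv 0$, contradicting \eqref{l42}). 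The sign of $\bar\theta_2$ itself: near $\alpha = 1$, by \eqref{c22} we have $\theta_2(1)=0$ and $z_{2\alpha}(1)=0$; and for $\alpha$ slightly less than $1$, $r_2(\alpha)<1$, so \eqref{f2} gives $\sin(\theta_2(\alpha)) = \lambda(r_2-1/r_2)$, which has sign $-\operatorname{sgn}(\lambda)$. So actually I do not even need the triple-junction angle: pick any $\alpha_1\in(0,1)$ with $\theta_2(\alpha_1)\ne 0$ (one exists, else $\theta_2\equiv 0$ on $(0,1)$, making $\Gamma_2$ a horizontal segment, contradicting $z_2(0)=\bar z > 0 = $ height at the cylinder… wait, $z_2(1)=z_2^\ast>0$, so a horizontal segment would need $\bar z = z_2^\ast$, which is not excluded — so I do still want the endpoint/angle argument).

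Let me therefore organize the clean version: Step 1, rule out $\lambda = 0$ using \eqref{f1} and \eqref{l42} (if $\lambda=0$ then $\sin\theta_1\equiv 0$, so $\theta_1\equiv 0$ or $\pi$ on $[0,1]$, contradicting $\sin(\bar\theta_1)<0$ in \eqref{l42}). Step 2, suppose for contradiction $\lambda>0$. Then from \eqref{f2} and $0<r_2(\alpha)<1$ on $[0,1)$ we get $\sin(\theta_2(\alpha)) = \lambda(r_2 - 1/r_2) < 0$ for all $\alpha\in[0,1)$, i.e. $z_{2\alpha}(\alpha) \le 0$ with equality only where... actually $\sin\theta_2 = z_{2\alpha}/\sqrt{r_{2\alpha}^2+z_{2\alpha}^2} < 0$ strictly on $[0,1)$, so $z_{2\alpha}(\alpha) < 0$ on $[0,1)$, hence $z_2$ is strictly decreasing on $[0,1]$ (by continuity at $\alpha=1$), giving $z_2(1) < z_2(0)$, i.e. $z_2^\ast < \bar z$. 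Step 3, derive the same inequality in the other direction from $\Gamma_3$-and-$\Gamma_1$ bookkeeping, or more cleanly: also examine $\Gamma_1$. From \eqref{f1}, $\sin(\theta_1) = \lambda(r_1 - a_\ell^2/r_1)$; at $\alpha=0$, \eqref{c10} gives $\sin(\theta_c) = \lambda(r_1^\ast - a_\ell^2/r_1^\ast)$ with $\sin(\theta_c)\ge 0$, and at $\alpha=1$, \eqref{l42} gives $\sin(\bar\theta_1) = \lambda(\bar r - a_\ell^2/\bar r) < 0$. If $\lambda > 0$ then $\bar r - a_\ell^2/\bar r < 0$, i.e. $\bar r < a_\ell$, while $\sin(\theta_c)\ge 0$ forces $r_1^\ast \ge a_\ell > \bar r$, contradicting the ordering... hmm, $r_1^\ast$ versus $\bar r$ is not directly constrained, so this needs care too. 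The cleanest contradiction is probably via the outer surface and a volume or monotonicity argument together with the cylinder contact condition $\theta_2(1)=0$.

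The step I expect to be the main obstacle is producing the \emph{contradiction} once $\lambda>0$ is assumed — i.e. showing that the configuration forced by $\lambda>0$ (the outer surface $\Gamma_2$ descending monotonically from the triple junction to the cylinder, and a corresponding behavior of $\Gamma_1$) is incompatible with one of the persistence/non-self-intersection constraints \eqref{0r1}, \eqref{0z}, or with the angle data at the triple junction \eqref{bc_S4}–\eqref{bc_S5}. I would look for the contradiction in the behavior of $\Gamma_1$: with $\lambda > 0$, \eqref{f1} shows $\sin\theta_1$ has the sign of $r_1 - a_\ell^2/r_1$, which is increasing in $r_1$, so $\theta_1$ crossing from $\theta_c$ (with $\sin\theta_c\ge 0$) down to $\bar\theta_1\in(-\pi,0)$ is geometrically constrained, and combined with the requirement $z_1(\alpha)>0$ on $(0,1)$ and $z_1(0)=0$, one should be able to show $\Gamma_1$ cannot close up the way the boundary conditions demand unless $\lambda<0$. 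Alternatively — and this is likely the route the authors take — one derives an explicit integral expression for $\bar z$ (height of the triple junction) in terms of $\lambda$, $a_\ell$, $\bar r$ from \eqref{f1} and shows it has the wrong sign when $\lambda>0$; I would carry out that integration, $z_1(1) - z_1(0) = \int \tan(\theta_1)\,dr_1$ over the appropriate $r_1$-range, and check the sign. The bookkeeping of which branch of $\theta_1$ one is on (the nodoid has $\theta_1$ ranging over more than a half-period) is where the care is needed.
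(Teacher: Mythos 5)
Your overall strategy is the right one, and your Step~1 (ruling out $\lambda=0$ directly from \eqref{f1} and $\sin(\bar{\theta}_1)<0$ in \eqref{l42}) is fine --- arguably cleaner than the paper's appeal to Delaunay's classification. But the proof as written does not close: in Step~3 you arrive at exactly the right inequality chain, $r_1^\ast \ge a_\ell > \bar{r}$ under the assumption $\lambda>0$, and then stop, saying that ``$r_1^\ast$ versus $\bar{r}$ is not directly constrained.'' That is the gap, and it is not a real obstacle: the ordering $r_1^\ast < \bar{r}$ \emph{is} already available. Indeed \eqref{r0z0} gives $r_1^\ast < r_3^\ast$, the catenoid analysis identifies $r_3^\ast = A = r_3(0)$, and the established monotonicity $r_{3\alpha}>0$ in \eqref{cincrease4} (Lemma \ref{lemma_G3}) gives $r_3(0)<r_3(1)=\bar{r}$ via \eqref{bc_S1}; hence $r_1^\ast < r_3^\ast = A < \bar{r}$, which contradicts $r_1^\ast > \bar{r}$ and finishes the proof. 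This is precisely the paper's argument, phrased there as: for $\lambda>0$ the right-hand side of \eqref{f1} is strictly increasing in $r_1$, while $r_1(0)<A<\bar{r}=r_1(1)$ and $\sin(\theta_c)\ge 0 > \sin(\bar{\theta}_1)$ force it to decrease from $\alpha=0$ to $\alpha=1$.

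The rest of the proposal --- the digressions about extracting a contradiction from $\Gamma_2$, from a volume argument, or from an explicit integral for $\bar{z}$ --- is speculation you correctly recognize as inconclusive ($z_2$ decreasing and $\bar{\theta}_2\in(-\pi/2,0)$ are not by themselves inconsistent with anything established at this stage), and none of it is needed once the ordering $r_1^\ast < r_3^\ast < \bar{r}$ is invoked. As submitted, the argument is incomplete because the decisive step is flagged as an open obstacle rather than resolved.
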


\textit{Proof.}
Let us first suppose that $\lambda=0.$ As noted above \cite{Del41,EHLM}, the only surfaces of revolution with vanishing mean curvature are  planes  and  catenoids.
From (\ref{bc_S1})--(\ref{r0z0}), it follows that the inner exterior surface is not  a plane orthogonal to the axis of symmetry.
From (\ref{0z})--(\ref{bc_S1}), $z_1(\alpha)\ge 0,$ $\alpha \in [0,1]$, and  $\bar{\theta}_1\notin\left(0,\frac{\pi}{2}\right)$ by  (\ref{l42}); hence the inner exterior surface is not a catenoid. Therefore $\lambda\neq0.$

 Let us now suppose that $\lambda>0$.   Since $r_1>0$, the right hand side of (\ref{f1}) is a strictly increasing function of $r_1$
  for  $\alpha \in [0,1]$. It  follows from (\ref{bc_S1}), (\ref{r0z0}), (\ref{cincrease4}) that $r_1=r_1(\alpha)$ satisfies
  $$r_1(0)<A=r_3(0)<r_3(1)=\bar{r}=r_1(1),$$
   However, by   (\ref{c10}), (\ref{l42}),
  $$\left.\frac{z_{1\alpha}}{\sqrt{r_{1\alpha}^2+z_{1\alpha}^2}}\right|_{\alpha=0}=\sin(\theta_c)\geq0 > \sin(\bar{\theta}_1)=\left.\frac{z_{1\alpha}}{\sqrt{r_{1\alpha}^2+z_{1\alpha}^2}}\right|_{\alpha=1},$$
  which yields a contradiction to (\ref{f1}).
Hence  $\lambda<0.$ \hfill$\square$

\begin{note} \label{G2G3} Since $\lambda<0$, it now follows from (\ref{AC3a}) and  (\ref{f2}) that $z_{3\alpha}(\alpha), z_{2\alpha}(\alpha)>0$
for $\alpha \in [0,1)$, and by  (\ref{bc_S1}),  $(r_3,z_3)(1)=(\bar{r},\bar{z})=(r_2,z_2)(0)$ . Hence $\Gamma_2$ has no self-intersections,
and $\Gamma_2 \cap \Gamma_3=(\bar{r}, \bar{z}).$
\end{note}

%
\color{black}

\begin{lemma}\label{monotone}
The functions $\theta_i(\alpha),$ $i=1,2$, are strictly decreasing on the interval $\alpha\in[0,1]$.
More specifically, $\theta_{i \alpha}(\alpha)<0,$ $i=1,2$, for $\alpha\in[0,1]$.
\end{lemma}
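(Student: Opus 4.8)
The plan is to derive an explicit closed-form expression for $\theta_{i\alpha}(\alpha)$, $i=1,2$, whose sign can then be read off directly; I deliberately avoid differentiating the first integrals (\ref{f1}), (\ref{f2}) in the form $\tfrac{d}{d\alpha}\sin(\theta_i)=\cos(\theta_i)\,\theta_{i\alpha}$, since that would require dividing by $\cos(\theta_i)$, which may vanish where $\Gamma_1$ is vertical (as can occur when $\theta_c$ is close to $\pi$).

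The key step is the following identity. Differentiating the angle relations (\ref{angles}) and invoking the arc-length normalization (\ref{const_S}), namely $r_{i\alpha}^2+z_{i\alpha}^2=c_i$, one obtains $\theta_{i\alpha}=c_i^{-1}(r_{i\alpha}z_{i\alpha\alpha}-z_{i\alpha}r_{i\alpha\alpha})$ and $z_{i\alpha}=\sqrt{c_i}\,\sin(\theta_i)$; substituting these into the curvature formula (\ref{H_S}) gives $H_i=\frac{\theta_{i\alpha}}{2\sqrt{c_i}}+\frac{\sin(\theta_i)}{2r_i}$ for $\alpha\in[0,1]$, $i=1,2$. Since $H_i\equiv\lambda$ by (\ref{conH}), and since $\sin(\theta_i)=\lambda\bigl(r_i-\tfrac{a_i^2}{r_i}\bigr)$ with $a_1:=a_\ell$ and $a_2:=1$ by (\ref{f1}), (\ref{f2}), I would substitute the latter into this identity and solve for $\theta_{i\alpha}$, obtaining
\[
\theta_{i\alpha}(\alpha)=\lambda\,\sqrt{c_i}\,\Bigl(1+\frac{a_i^{\,2}}{r_i^{\,2}(\alpha)}\Bigr),\qquad \alpha\in[0,1],\quad i=1,2.
\]

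It then only remains to read off signs. By Lemma \ref{lambda_neg}, $\lambda<0$; by (\ref{const_S}), $c_i>0$; and $1+a_i^2/r_i^2(\alpha)>0$ since $r_i(\alpha)>0$ on $[0,1]$ by (\ref{0r1}) (and $a_\ell\in(0,1)$ by Lemma \ref{lemma_al}). Hence $\theta_{i\alpha}(\alpha)<0$ for every $\alpha\in[0,1]$, so $\theta_i$ is strictly decreasing on $[0,1]$, $i=1,2$. I expect no genuine obstacle in carrying this out — the argument is essentially two lines of algebra once the identity $H_i=\frac{\theta_{i\alpha}}{2\sqrt{c_i}}+\frac{\sin(\theta_i)}{2r_i}$ is in hand; the only point requiring care, as noted, is to route the computation through the curvature equation (which holds pointwise on all of $[0,1]$) rather than through the differentiated $\sin(\theta_i)$ relation.
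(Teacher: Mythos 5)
Your proof is correct, and it reaches exactly the formula the paper obtains, $\theta_{i\alpha}=\lambda\,c_i^{1/2}\bigl(1+a_i^2/r_i^2\bigr)$ with $a_1=a_\ell$, $a_2=1$, but by a genuinely different (and cleaner) route. The paper differentiates the first integrals (\ref{f1})--(\ref{f2}) to get $\cos(\theta_i)\,\theta_{i\alpha}=r_{i\alpha}\lambda\bigl(1+a_i^2/r_i^2\bigr)$ and then divides by $r_{i\alpha}$; this forces a separate treatment of the points where $r_{i\alpha}=0$ (equivalently $\cos(\theta_i)=0$), which the paper dispatches somewhat informally by noting these are isolated and appealing to regularity. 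Your argument instead substitutes the arc-length identities $r_{i\alpha}z_{i\alpha\alpha}-z_{i\alpha}r_{i\alpha\alpha}=c_i\,\theta_{i\alpha}$ and $z_{i\alpha}=\sqrt{c_i}\,\sin(\theta_i)$ into (\ref{H_S}) to get the pointwise identity $H_i=\tfrac{\theta_{i\alpha}}{2\sqrt{c_i}}+\tfrac{\sin(\theta_i)}{2r_i}$, then uses $H_i\equiv\lambda$ from (\ref{conH}) together with the undifferentiated relations (\ref{f1})--(\ref{f2}) and solves algebraically for $\theta_{i\alpha}$; no division by a possibly vanishing quantity occurs, so the formula holds on all of $[0,1]$ without a case split. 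What your approach buys is precisely the removal of the small continuity patch at vertical points of $\Gamma_1$ (relevant when $\theta_c$ is near $\pi$, where $\theta_1$ passes through $\pi/2$); what it costs is the need for the second-order curvature equation and hence $C^2$ regularity of the parametrization, which the paper assumes anyway. The sign conclusion via Lemma \ref{lambda_neg}, (\ref{const_S}) and (\ref{0r1}) is exactly as in the paper.
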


\textit{Proof.} Differentiating (\ref{f1}), (\ref{f2}), we obtain that
\begin{equation}
\frac{d}{d\alpha}\left(\frac{z_{1\alpha}}{\sqrt{r_{1\alpha}^2+z_{1\alpha}^2}}\right)=r_{1\alpha}\lambda\left(1+\frac{a_\ell^2}{r_1^2}\right),\quad
\frac{d}{d\alpha}\left(\frac{z_{2\alpha}}{\sqrt{r_{2\alpha}^2+z_{2\alpha}^2}}\right)=r_{2\alpha}\lambda\left(1+\frac{1}{r_2^2}\right),
 \quad \alpha \in [0,1],\label{lem51}
\end{equation}
  and using (\ref{const_S}),
\begin{equation}\label{lem53}
\frac{d}{d\alpha}\left(\frac{z_{i\alpha}}{\sqrt{r_{i\alpha}^2+z_{i\alpha}^2}}\right)=
\frac{d\sin(\theta_i)}{d\alpha}=\cos(\theta_i)\theta_{i\alpha}={r_{i\alpha}\theta_{i\alpha}}c_i^{-1/2},\ i=1,2, \quad \alpha \in [0,1].
\end{equation}
Since $\lambda<0$ by Lemma \ref{lambda_neg}, we obtain from (\ref{lem51})--(\ref{lem53}) that for $\alpha \in [0,1]$, if $r_{i\alpha}\ne 0$, then
\begin{eqnarray}
\theta_{1\alpha}=\lambda c_1^{1/2} \left(1+\frac{a_\ell^2}{r_1^2}\right)<\lambda c_1^{1/2}<0, \quad
\theta_{2\alpha}=\lambda c_2^{1/2} \left(1+\frac{1}{r_2^2}\right)<\lambda c_2^{1/2}<0.
\end{eqnarray}
Noting  that if $r_{i\alpha}= 0$, then $\cos(\theta_i)=0,$ which occurs only at isolated values of $\theta_i$, and
recalling  our regularity assumptions, the lemma follows. \hfill$\square$

\bigskip
 Recalling (\ref{c10}),  (\ref{c22}),  Lemma \ref{monotone} implies the following
\begin{cor} \label{anglebounds} $\bar{\theta}_1<\theta_c,$  $0<\bar{\theta}_2.$
\end{cor}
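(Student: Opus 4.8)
This statement is an immediate consequence of Lemma \ref{monotone} together with the boundary angle conditions already recorded, so the plan is simply to chain these facts together; no new estimate is needed. The key point is that Lemma \ref{monotone} gives strict monotonicity of $\theta_i(\alpha)$ on the \emph{closed} interval $[0,1]$ for $i=1,2$, so the values at the two endpoints are strictly ordered.

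\textbf{Step 1 (bound on $\bar\theta_1$).} Recall from (\ref{tjangles}) that $\bar\theta_1=\theta_1(1)$, and from (\ref{c10}) that $\theta_1(0)=\theta_c$. Since $\theta_1$ is strictly decreasing on $[0,1]$ by Lemma \ref{monotone}, I would conclude $\bar\theta_1=\theta_1(1)<\theta_1(0)=\theta_c$.

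\textbf{Step 2 (bound on $\bar\theta_2$).} Recall from (\ref{tjangles}) that $\bar\theta_2=\theta_2(0)$, and from (\ref{c22}) that $\theta_2(1)=0$. Since $\theta_2$ is strictly decreasing on $[0,1]$ by Lemma \ref{monotone}, I would conclude $\bar\theta_2=\theta_2(0)>\theta_2(1)=0$.

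\textbf{Anticipated obstacle.} There is essentially nothing hard left: the only delicate issue is that strict monotonicity must hold even across the isolated values of $\alpha$ where $r_{i\alpha}$ vanishes (i.e.\ where $\cos(\theta_i)=0$), but this is exactly what Lemma \ref{monotone} already asserts, using the regularity assumptions. I would therefore just invoke Lemma \ref{monotone} as a black box and combine it with the endpoint identities (\ref{tjangles}), (\ref{c10}), (\ref{c22}). Hence both inequalities follow, completing the proof.
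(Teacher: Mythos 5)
Your proof is correct and is precisely the argument the paper intends: the corollary is stated immediately after the sentence ``Recalling (\ref{c10}), (\ref{c22}), Lemma \ref{monotone} implies the following,'' which is exactly your chain of endpoint identities combined with the strict monotonicity of $\theta_1$ and $\theta_2$ on $[0,1]$. Nothing further is needed.
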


\begin{lemma} \label{angle_order} There exist no steady state solutions when $\beta=\pi/2$.
\end{lemma}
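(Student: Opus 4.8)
The plan is to assume, for contradiction, that a steady state solution exists with $\beta=\pi/2$, and to obtain a contradiction at the triple junction by comparing the two exterior surfaces there.

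First I would record what $\beta=\pi/2$ does to the triple-junction angles. Adding the two balance-of-mechanical-forces relations $-\bar\theta_1+\bar\theta_3=\beta$ and $\bar\theta_2-\bar\theta_3=-\pi+\beta$ from (\ref{bc_S4})--(\ref{bc_S5}) (equivalently, using (\ref{l42}) and (\ref{l22})) gives $\bar\theta_1-\bar\theta_2=\pi-2\beta$. Hence for $\beta=\pi/2$ one has $\bar\theta_1=\bar\theta_2$; that is, the inner exterior curve $\Gamma_1$ and the outer exterior curve $\Gamma_2$ leave the common triple-junction point $(\bar r,\bar z)$ with the same tangent angle, so $\sin(\bar\theta_1)=\sin(\bar\theta_2)$.

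Next I would evaluate the constant-mean-curvature first integrals at that shared point. By (\ref{bc_S1}) we have $r_1(1)=\bar r=r_2(0)$, while $\theta_1(1)=\bar\theta_1$ and $\theta_2(0)=\bar\theta_2$. Substituting $\alpha=1$ into (\ref{f1}) and $\alpha=0$ into (\ref{f2}) yields
\[
\lambda\Bigl(\bar r-\frac{a_\ell^2}{\bar r}\Bigr)=\sin(\bar\theta_1)=\sin(\bar\theta_2)=\lambda\Bigl(\bar r-\frac{1}{\bar r}\Bigr).
\]
By Lemma \ref{lambda_neg} we have $\lambda<0$, in particular $\lambda\neq0$, and $\bar r>0$ by (\ref{r0z0}); dividing through forces $a_\ell^2=1$, i.e. $a_\ell=1$. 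This contradicts Lemma \ref{lemma_al}, which asserts $a_\ell\in(0,1)$. Hence no steady state solution can exist when $\beta=\pi/2$.

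The argument is short and involves no estimates; the one point that warrants a check is that every ingredient used — the first integrals (\ref{f1}) and (\ref{f2}) with $C_1=-\lambda a_\ell^2$, $C_2=-\lambda$; the strict bound $a_\ell\in(0,1)$ from Lemma \ref{lemma_al}; and $\lambda\neq0$ from Lemma \ref{lambda_neg} — was derived for the whole range $\beta\in[\pi/2,\pi)$ and so remains valid under the sole hypothesis $\beta=\pi/2$. The only real "idea" is recognizing that $\beta=\pi/2$ collapses the two distinct Herring angle conditions at the triple junction into a single common tangent direction, after which the mismatch between the nodoid parameters $a_\ell$ and $1$ of $\Gamma_1$ and $\Gamma_2$ is immediate.
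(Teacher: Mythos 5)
Your proof is correct, and every ingredient you invoke (the first integrals (\ref{f1}), (\ref{f2}), Lemma \ref{lemma_al}, Lemma \ref{lambda_neg}) is indeed established in the paper for the full range $\beta\in[\pi/2,\pi)$, so nothing breaks at $\beta=\pi/2$. You share the paper's key first step — adding (\ref{bc_S4}) and (\ref{bc_S5}) to see that $\beta=\pi/2$ collapses the two triple-junction angles to $\bar\theta_1=\bar\theta_2$ — but you finish differently. The paper reads the contradiction directly off the sign ordering $-\pi<\bar\theta_1<0<\bar\theta_2$ (its (\ref{E4}), obtained from (\ref{l42}) and Corollary \ref{anglebounds}, the latter resting on the monotonicity Lemma \ref{monotone}), so equality of the two angles is immediately impossible; this is a one-line conclusion. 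You instead evaluate the two nodoid first integrals at the common point $r_1(1)=\bar r=r_2(0)$ and show that equality of $\sin\bar\theta_1$ and $\sin\bar\theta_2$ forces the neck parameters to coincide, $a_\ell^2=1$, contradicting $a_\ell\in(0,1)$. Your route is slightly longer but yields a formally stronger statement: not only can the angles not coincide, even their sines cannot agree at the triple junction, since the two exterior nodoids have distinct parameters $a_\ell<1$ and $1$. Both arguments ultimately lean on the same structural facts ($\lambda<0$ and the first-integral representation of the exterior curves), so the difference is one of presentation rather than substance.
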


\textit{Proof.}
From Corollary \ref{anglebounds} and  (\ref{psi3f}), (\ref{l42}), (\ref{l22}), we get that
\begin{equation}\label{E4}
-\pi < \bar{\theta}_1 < 0<\bar{\theta}_2< \bar{\theta}_3 < \frac{\pi}{2}.
\end{equation}
However, if $\beta=\pi/2,$ then $\bar{\theta}_1=\bar{\theta}_2=\bar{\theta}_3 - \pi/2$ by (\ref{l22}), which contradicts (\ref{E4}).
\hfill$\square$

\begin{lemma}\label{zerothetac}
There exist no steady state solutions when $\theta_c=0$.
\end{lemma}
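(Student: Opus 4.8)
The plan is to show that $\theta_c=0$ is incompatible with the positivity constraint $z_1(\alpha)>0$ for $\alpha\in(0,1]$ in (\ref{0z}): geometrically, if the inner exterior profile $\Gamma_1$ leaves the substrate tangentially (so $\theta_1(0)=0$), then, being the profile of a nodoid with negative mean curvature, it is immediately dragged below the substrate. So I would argue by contradiction, assuming a steady state with $\theta_c=0$ exists and using the analytic description of $\Gamma_1$ already established.

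First I would note that (\ref{c10}) with $\theta_c=0$ gives $\theta_1(0)=0$, hence via (\ref{angles}) and (\ref{const_S}), $z_{1\alpha}(0)=c_1^{1/2}\sin\theta_1(0)=0$. Next I would invoke Lemma \ref{monotone}: $\theta_{1\alpha}(\alpha)<0$ throughout $[0,1]$; it is worth checking that neither this lemma nor the results it rests on (Lemma \ref{lemma_al}, Lemma \ref{lambda_neg}) implicitly assumed $\theta_c\ne0$, which they do not, so they remain available here. Strict monotonicity together with $\theta_1(0)=0$ and $\theta_1(1)=\bar{\theta}_1\in(-\pi,0)$ from (\ref{l42}) forces $\theta_1(\alpha)\in(\bar{\theta}_1,0)\subset(-\pi,0)$ for $\alpha\in(0,1)$, hence $\sin\theta_1(\alpha)<0$ there. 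The conclusion then follows by integration: since $\sqrt{r_{1\alpha}^2+z_{1\alpha}^2}=c_1^{1/2}$ is constant by (\ref{const_S}) and $z_1(0)=0$ by (\ref{bc_S1}),
$$z_1(\alpha)\;=\;\int_0^\alpha z_{1\alpha}(\tilde\alpha)\,d\tilde\alpha\;=\;c_1^{1/2}\!\int_0^\alpha \sin\theta_1(\tilde\alpha)\,d\tilde\alpha\;<\;0,\qquad \alpha\in(0,1),$$
which contradicts (\ref{0z}); therefore no steady state exists with $\theta_c=0$.

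I do not anticipate a real obstacle; the only point requiring a little care is the remark above that Lemma \ref{monotone} (and its prerequisites) still applies when $\theta_c=0$. If one prefers to bypass Lemma \ref{monotone}, an equivalent route is a direct local computation at $\alpha=0$: there $z_{1\alpha}(0)=0$ and $r_{1\alpha\alpha}(0)=0$ by (\ref{V_S3}), so the mean-curvature identity (\ref{H_S}) with (\ref{conH}) collapses to $\lambda=z_{1\alpha\alpha}(0)/(2c_1)$, whence $z_{1\alpha\alpha}(0)=2\lambda c_1<0$ by Lemma \ref{lambda_neg}, and a Taylor expansion gives $z_1(\alpha)<0$ for small $\alpha>0$, again contradicting (\ref{0z}). (Reassuringly, the same computation with $\theta_c=\pi$ instead gives $z_{1\alpha\alpha}(0)>0$, consistent with the continuum of solutions found in \S\ref{asymptotic analysis}.)
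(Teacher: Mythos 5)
Your main argument is correct and is essentially the paper's own proof: both use $\sin(\theta_1(0))=\sin(\theta_c)=0$ together with the strict monotonicity of $\theta_1$ from Lemma \ref{monotone} to conclude $z_{1\alpha}\le 0$ on $[0,1]$, contradicting $z_1(0)=0$ and the positivity of $z_1$ required by (\ref{0z}) and (\ref{r0z0}); you merely spell out the integration step and the applicability of Lemma \ref{monotone} more explicitly. The alternative local computation via $z_{1\alpha\alpha}(0)=2\lambda c_1<0$ is a valid independent check but not needed.
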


\textit{Proof.} Suppose that $\theta_c=0.$ Then $\sin(\theta_1(0))=\sin(\theta_c)=0,$ and from Lemma \ref{monotone} and (\ref{c10})-(\ref{su1})
we get that $z_{1\alpha}(\alpha) \le 0,$ $\alpha \in [0,1],$ yielding a contradiction in (\ref{bc_S1})--(\ref{r0z0}).
\hfill$\square$

\bigskip
Since $\theta_i(\alpha),$  $i=1,2$ are  strictly decreasing functions on the interval $\alpha\in[0,1]$,
we may make a change of variables and use the variable $\theta_i$ for the exterior surface $(r_i, z_i)$ for $i=1,2$, respectively, instead of  $\alpha$.
Explicit expressions for $(r_i(\theta_i), z_i(\theta_i))$, $i=1,2,$ are derived below. We recall that it was shown in   (\ref{cincrease6}) that $\theta_3(\alpha)$ is a strictly
decreasing function of $\alpha$ for $\alpha\in [0,1]$, and explicit formulas for $(r_3(\theta_3), z_3(\theta_3))$ were given in (\ref{caten1}).
Thus, in particular,
\begin{equation} \label{E5}
\theta_1\in[\bar{\theta}_1,\theta_c], \quad \theta_2\in[0,\bar{\theta}_2], \quad \theta_3 \in [\bar{\theta}_3, \pi/2],
\end{equation}
where $\bar{\theta}_1,$ $\bar{\theta}_2,$ $\bar{\theta}_3$ satisfy (\ref{E4}), and in view of Lemma  \ref{angle_order} and Lemma \ref{zerothetac}, we
henceforth assume that
\begin{equation} \label{obetatheta}
\beta \in (\pi/2,\, \pi), \quad \theta_c \in (0, \, \pi].
\end{equation}

\bigskip
To find $r_1(\theta_1),\ r_2(\theta_2),$ since $\lambda<0$ by Lemma \ref{lambda_neg}, we may write (\ref{f1}), (\ref{f2}), respectively, as
\begin{equation}\nonumber
r_1^2-\frac{r_1\sin(\theta_1)}{\lambda}-a_\ell^2=0,\quad
r_2^2-\frac{r_2\sin(\theta_2)}{\lambda}-1=0,
\end{equation}
then solving the above equations for $r_1,r_2>0$ yields
\begin{equation}
r_1(\theta_1)=\frac{\sin(\theta_1)-\sqrt{\sin^2(\theta_1)+4\lambda^2a_\ell^2}}{2\lambda},\quad \theta_1\in[\bar{\theta}_1,\theta_c],  \quad
r_2(\theta_2)=\frac{\sin(\theta_2)-\sqrt{\sin^2(\theta_2)+4\lambda^2}}{2\lambda},\quad \theta_2\in[0,\bar{\theta}_2].\label{rnod}
\end{equation}

Noting that ${z_{i\theta_i}}=\tan(\theta_i){r_{i\theta_i}}$, $i=1,2,$ we obtain from  (\ref{rnod}) that
\begin{equation}\nonumber
z_{1\theta_1}=\frac{1}{2\lambda}\left(\sin(\theta_1)-\frac{\sin^2(\theta_1)}{\sqrt{\sin^2(\theta_1)+4\lambda^2a_\ell^2}}\right),\quad \theta_1\in[\bar{\theta}_1,\theta_c], \quad
z_{2\theta_2}=\frac{1}{2\lambda}\left(\sin(\theta_2)-\frac{\sin^2(\theta_2)}{\sqrt{\sin^2(\theta_2)+4\lambda^2}}\right),\quad\theta_2\in[0,\bar{\theta}_2].
\end{equation}
Solving these equations and taking  (\ref{bc_S1}) into account, we obtain that
\begin{align}
\begin{array}{l}
z_1(\theta_1)=-\frac{1}{2\lambda}\int_{\bar{\theta}_1}^{\theta_1}\left(\frac{\sin^2(x)}{\sqrt{\sin^2(x)+4\lambda^2a_\ell^2}}-\sin(x)\right)dx+\bar{z}, \quad \theta_1\in[\bar{\theta}_1,\theta_c],\\
z_2(\theta_2)=-\frac{1}{2\lambda}\int_{\bar{\theta}_2}^{\theta_2}\left(\frac{\sin^2(x)}{\sqrt{\sin^2(x)+4\lambda^2}}-\sin(x)\right)dx+\bar{z},\quad \theta_2\in[0,\bar{\theta}_2].\label{znod}
\end{array}
\end{align}
The curves in (\ref{rnod})--(\ref{znod})  represent the meridian profiles of nodoids \cite{EHLM},\cite{REFF} with equal mean curvatures,  $\lambda<0$.

\bigskip
Based on our results so far, we may state the following: given $\beta\in (\frac{\pi}{2},\pi)$, $\theta_c\in(0,\pi]$,
if there exists a steady state solution to (\ref{meridian1})--(\ref{ic}), then it may be expressed in terms of the angle variables as
\begin{equation}
\begin{cases}\label{solutions}
&\begin{pmatrix}r_1(\theta_1)\\z_1(\theta_1)\end{pmatrix}=
\begin{pmatrix}\frac{\sin(\theta_1)-\sqrt{\sin^2(\theta_1)+4\lambda^2a_\ell^2}}{2\lambda}\\\frac{-1}{2\lambda}\int_{\bar{\theta}_1}^{\theta_1}{\left(\frac{\sin^2(x)}{\sqrt{\sin^2(x)+4\lambda^2a_\ell^2}}-\sin(x)\right)dx}+\bar{z}\end{pmatrix},\quad \theta_1\in\left[\bar{\theta}_1,\theta_c\right], \\
\\
&\begin{pmatrix}r_2(\theta_2)\\z_2(\theta_2)\end{pmatrix}=\begin{pmatrix}\frac{\sin(\theta_2)-\sqrt{\sin^2(\theta_2)+4\lambda^2}}{2\lambda}\\\frac{-1}{2\lambda}\int_{\bar{\theta}_2}^{\theta_2}\left(\frac{\sin^2(x)}{\sqrt{\sin^2(x)+4\lambda^2}}-\sin(x)\right)dx+\bar{z}\end{pmatrix},\quad \theta_2\in\left[0,\bar{\theta}_2\right],\\
\\
&\begin{pmatrix}r_3(\theta_3)\\z_3(\theta_3)\end{pmatrix}=\begin{pmatrix}\frac{A}{\sin(\theta_3)}\\A\,\log\left(\frac{1+\cos(\theta_3)}{\sin(\theta_3)}\right)\end{pmatrix},\quad \theta_3\in\left[\bar{\theta}_3,\frac{\pi}{2}\right],\\
\\
&\bar{\theta}_1= \bar{\theta}_3-\beta,\quad\bar{\theta}_2 = \bar{\theta}_3-\pi+\beta,\quad \bar{\theta}_3=\arcsin(A/\bar{r}), \quad\bar{z} = A\,\acosh(\bar{r}/A),
\end{cases}
\end{equation}
with the range conventions: $\acosh: [1,\infty) \rightarrow [0,\infty)$, $\arcsin: [-1,1]\rightarrow [-\pi/2, \pi/2],$ and where \color{black} $\bar{\theta}_1,\,\bar{\theta}_2,\,\bar{\theta}_3$ satisfy (\ref{E4}). 
\par The problem formulated in (\ref{meridian1})--(\ref{l4a}), to be satisfied by the steady states, contains four (unknown) endpoints: $(\bar{r}(t), \bar{z}(t)),$  $({r_1^\ast}(t), 0),$ $(1, {z_2^\ast}(t)),$ $({r_3^\ast}(t), 0),$ described by $5$ unknown functions,
in addition to the $2$ physical parameters $(\beta, \,\theta_c)$. The expressions in (\ref{solutions}) for the steady state solutions contain $4$ parameters:  $\bar{r},$
 $A,$   $\lambda$,  $a_\ell,$ \color{black} in addition to  $(\beta,\, \theta_c).$
    Thus we begin with a problem formulation containing $5$ unknown functions, whose dependence on  the $2$ physical parameters, $(\beta,\, \theta_c)$,
     is not  obvious, and  obtain the expressions given by (\ref{solutions}), for the steady state solutions, which  contain $4$ \color{black} unknown parameters,
     whose dependence on   $(\beta,\, \theta_c)$ is not transparent.
     Thus \color{black}  (\ref{solutions}) can be viewed as necessary conditions to be satisfied by steady state solutions. Note that the initial conditions given in (\ref{ic}) have not been considered in deriving (\ref{solutions}). In particular, the total volume  at $t=0$, $\mathbb{V}(0)$, which is given by (\ref{volume}), and the initial total free energy, $\mathbb{E}(0)$, which is indicated in (\ref{etot}) have not been taken into account; in particular, the volume constraint given in Lemma \ref{dvdt} and the decrease (non-increase)  of the total free energy given in Lemma \ref{dedt} have not been considered.  
     Our approach is to explore the set of all possible solutions, leaving the restrictions implied by Lemma \ref{dedt} and Lemma \ref{dvdt} to be considered later. Furthermore, we note that there was some freedom in choosing the $4$ parameters $A,\bar{r},\lambda,a_\ell$, for example we could have included $\bar{z}$ in designating the unknown parameters, rather than prescribing it explicitly in (\ref{solutions}) as we did above. Shortly, however, in Subsection \ref{parameters and constraints} we demonstrate  a reduction to two parametric degrees of freedom, in addition to the physical parameters $\beta,\ \theta_c$,
     and the constraints implied by volume conservation and energy decay, which are discussed briefly further in Subsection \ref{energy volume_ss} and in Section \ref{numerical solutions}.\\
     \par In obtaining (\ref{solutions}),
various parametric constraints were encountered, which need to be satisfied in order to obtain a necessary and sufficient prescription for the possible steady states.  These constraints are now formulated and analyzed.  Let us first note that \color{black}
       \begin{equation} \label{constraints1} 0<A<\bar{r}<1, \quad \lambda <0, \quad 0 < a_{\ell}<1,\quad\quad \rm{(C1)-(C3)}\end{equation}
                 are implied by  Lemma \ref{lemma_G3},   Lemma \ref{lemma_al} and Lemma \ref{lambda_neg}.
     Let us now assume the parameters appearing in (\ref{solutions}) to be given and to satisfy (\ref{solutions})--(\ref{constraints1}), and we obtain necessary conditions that need to be satisfied in order to guarantee that (\ref{solutions}) satisfies \color{black}  (\ref{meridian1})--(\ref{l4a}) for some prescribed choice of  $(\beta,\, \theta_c),$  taking into consideration questions concerning self-intersections and intersections which were postponed earlier.

Let us start by examining \color{black} $\Gamma_3$. Note that $\Gamma_3$  as prescribed in (\ref{solutions}) depends on $A,$  $\bar{r}$, and we are assuming  $A,$  $\bar{r}$ to  satisfy  (\ref{constraints1}). Setting
\begin{equation}
\bar{z}=A\,\acosh\Bigl(\frac{\bar{r}}{A}\Bigr),\quad \bar{z}>0, \quad\quad \bar{\theta}_3=\arcsin(A/\bar{r}),\quad \bar{\theta}_3\in (0,\pi/2),\label{bar3}\end{equation}
in accordance with  (\ref{solutions})--(\ref{constraints1}), it is easy to check that $(r_3(\bar{\theta}_3), z_3(\bar{\theta}_3)) = (\bar{r}, \bar{z}).$ Moreover, since $(r_3(\pi/2), z_3(\pi/2))=(A,0)$, we identify 
\begin{equation}
r_3^\ast=A,
\end{equation} thus obtaining that (\ref{r0z0}) holds for $r_3^\ast$, $\bar{r},$ $\bar{z},$  and
  $(r_3(\pi/2), z_3(\pi/2)) \neq (r_3(\bar{\theta}_3), z_3(\bar{\theta}_3)).$  It is easy to check that  $r_{3 \theta_3}(\theta_3)<0, z_{3 \theta_3}(\theta_3) <0$ \color{black} for $\theta_3 \in (\bar{\theta}_3, \pi/2),$  and $|\Gamma_3|=\int_{\bar{\theta}_3}^{\pi/2} \sqrt{ r_{3 \theta_3}^2(\theta_3) + z_{3 \theta_3}^2(\theta_3)}\, d\theta_3   <\infty.$ So  $\Gamma_3$ can be readily reparametrized by $\alpha \in [0,1]$ in accordance with (\ref{bc_S1}), (\ref{const_S}), (\ref{V_S3}), (\ref{bc_S3}). Following  reparametrization,   $r_{3 \alpha}(\alpha), z_{3 \alpha}(\alpha) >0$ for $\alpha \in (0,1)$, and hence $\Gamma_3$ is non-self-intersecting, $\Gamma_3$ intersects the substrate only at $(r_3^\ast ,0)$, $\Gamma_3$  does not \color{black} intersect the inert cylinder, \color{black} 
 and  the monotonicity of  $(r_3(\alpha), z_3(\alpha))$  implies that (\ref{0r1})-(\ref{0z}), (\ref{endsne})   hold for $\Gamma_3$.
 Also, (\ref{conH})--(\ref{H_S}) hold, as can be verified directly.

  In summary\color{black}, $\Gamma_3$ represents a catenoid, as it describes the meridian profile of an axi-symmetric surface with mean curvature zero which is not planar. Furthermore, (\ref{solutions})--(\ref{constraints1}) can be seen to imply (\ref{AC3a})--(\ref{cincrease6}), in particular \color{black} 
 $$r_{3 \alpha}(\alpha), \, z_{3 \alpha}(\alpha) >0, \quad \theta_{3 \alpha}(\alpha) <0, \quad \alpha \in (0,1],$$
 from which we may conclude that $\Gamma_3$ is bounded from above and at either side, respectively, by the  lines
 \begin{equation} \label{lines3}
 z-\bar{z} = \tan(\bar{\theta}_3)(r-\bar{r}), \quad r=A, \quad r=\bar{r}. \end{equation}

  Next let us examine $\Gamma_2$, which  can be seen from (\ref{solutions}) to depend on $\lambda,$  $A$,  $\bar{r}$, and \color{black}$\beta$. We continue to assume  that (\ref{constraints1})  holds.
  In accordance with (\ref{bc_S1}), (\ref{solutions}), we impose the  constraint
  \begin{equation} \label{constraints2}
   \bar{r}=r_2(\bar{\theta}_2)=\frac{1}{2\lambda} \Biggl( \sin(\bar{\theta}_2)-\sqrt{\sin^2(\bar{\theta}_2)+4\lambda^2}\Biggr).\quad\quad \rm{(C4)}
  \end{equation}
  From the expression for $\bar{\theta}_2$ given in (\ref{solutions}), since $\beta\in\left(\frac{\pi}{2},\pi\right),$ one obtains that  $\bar{\theta}_2\in\left(\bar{\theta}_3-\frac{\pi}{2},\bar{\theta}_3\right)\subset\left(-\frac{\pi}{2},\frac{\pi}{2}\right).$ Thus, since $0<\bar{r}<1,\,\lambda<0$  by (\ref{constraints1}), equation (\ref{constraints2}) implies that $\sin(\bar{\theta}_2)>0$. Hence \begin{equation}0<\bar{\theta}_2=\bar{\theta}_3-\pi+\beta<\bar{\theta}_3.\label{bar2}\end{equation}
  Looking at (\ref{solutions}), and since $\lambda<0$ by (\ref{constraints1}), we note that $r_2(\theta_2 =0)=1$ and $z_2(\bar{\theta}_2)=\bar{z}$, and  we make the identification
  \begin{equation} \label{zstar}
   z_2^\ast=z_2(\theta_2=0)=\frac{-1}{2\lambda}\int_{\bar{\theta}_2}^{\theta_2=0}\Biggl[\frac{\sin^2(x)}{\sqrt{\sin^2(x)+4\lambda^2}}-\sin(x)\Biggr]dx
  +\bar{z},
  \end{equation}
  which implies that  $(r_2(\theta_2=0), z_2(\theta_2=0))=(1,z_2^\ast),$   $(r_2(\bar{\theta}_2), z_2(\bar{\theta}_2)) = (\bar{r}, \bar{z}).$
 Since by (\ref{constraints1}),(\ref{bar3}), and (\ref{bar2}) we have $0<\bar{r}<1,$ $\lambda<0,$ $\bar{z}>0$, and  $0<\bar{\theta}_2<\frac{\pi}{2}$, we obtain by (\ref{zstar}) that $z_2^\ast>0$ in accordance with (\ref{r0z0}), \color{black}
  and we obtain that $(1,z_2^\ast) \neq (\bar{r}, \bar{z}).$  In analogy with the results for $\Gamma_3$,
  it is easy to check that  $r_{2 \theta_2}(\theta_2), z_{2 \theta_2}(\theta_2) <0$ for $\theta_2 \in (0,\, \bar{\theta}_2),$  and $|\Gamma_2|   <\infty.$ So  $\Gamma_2$ can be readily reparametrized by $\alpha \in [0,1]$ in accordance with (\ref{bc_S1}), (\ref{endsne}), (\ref{const_S}), (\ref{V_S3}), (\ref{bc_S3}), (\ref{bc_S5}), with $(r_2(0),\, z_2(0))=(r_3(1),\, z_3(1))$. Following the reparametrization,   $r_{2 \alpha}(\alpha), z_{2 \alpha}(\alpha) >0$ for $\alpha \in (0,1)$, so (\ref{0r1})-(\ref{0z}) hold, \color{black} $\Gamma_2$ is non-self-intersecting, $\Gamma_2 \cap \Gamma_3
  =(\bar{r},\, \bar{z}),$ $\Gamma_2$ doesn't intersect the substrate and $\Gamma_2$ intersects the inert cylinder only at $(1,z_2^\ast)$. \color{black} 
Furthermore $H_2(\alpha)\equiv \lambda$  in accordance with (\ref{conH})--(\ref{H_S}), as can be verified directly, 
  and $\Gamma_2$ describes the meridian profile of a nodoid
 with   constant negative mean curvature  \cite{Del41,EHLM}. Moreover, (\ref{solutions})--(\ref{constraints1}) can be seen to imply (\ref{f2}), and subsequently,
 $r_{2 \alpha}(\alpha)>0,$ $z_{2 \alpha}(\alpha)>0,$ $\theta_{2 \alpha}(\alpha)<0,$ $\alpha \in (0,1),$
 from which we may conclude that $\Gamma_2$ is bounded from above and at either side, respectively, by the  lines \color{black}
 \begin{equation} \label{lines2}
 z-\bar{z} = \tan(\bar{\theta}_3)(r-\bar{r}), \quad  r=\bar{r}, \quad r=1. \end{equation}

Lastly let us examine $\Gamma_1$, which  by (\ref{solutions}) depends on  $A,$ $a_{\ell},$ $\lambda,$ $\bar{r}$,  and \color{black} $\beta$.
Recalling the expression for $\bar{\theta}_1$ given by (\ref{solutions}), from (\ref{bar3}) and since  $\beta\in\left(\frac{\pi}{2},\pi\right),\,\theta_c \in (0, \pi]$, it follows that
\begin{equation} \label{angles1}
-\pi < \bar{\theta}_1=\bar{\theta}_3-\beta < 0 < \theta_c \le  \pi.
\end{equation}
In particular, from (\ref{bar3}), (\ref{bar2}), and (\ref{angles1}), we obtain that (\ref{E4}) is satisfied.\par
 We get from (\ref{solutions}) that $z_1(\bar{\theta}_1)=\bar{z},$
we need to impose the constraints
\begin{equation} \label{constraints3}
    r_1(\bar{\theta}_1)=\bar{r}, \quad  z_1(\theta_c)=0,\quad\quad \rm{(C5)-(C6)}
\end{equation}
  and  we make the identification
\begin{equation} \label{r1star}
r_1^\ast=r_1(\theta_c).
\end{equation}
Note that prior to (\ref{angles1})--(\ref{r1star}), the physical variable $\theta_c$ had only previously appeared in this subsection as the upper bound on the range of $\theta_1$ in (\ref{solutions}). As we shall see in Subsection \ref{parameters and constraints}\color{black}, most of the parametric constraints can be formulated without reference to $\theta_c.$ 
Since \color{black}$\theta_c \in (0, \pi]$, it follows from (\ref{solutions}), (\ref{constraints1}), that $0 < r_1^\ast <1.$ Thus if (\ref{constraints3}) is satisfied,
then  $(r_1(\theta_c), z_1(\theta_c)) \ne (r_1(\bar{\theta}_1), z_1(\bar{\theta}_1)),$ and hence  (\ref{endsne}) holds.
Note that  $(\bar{r},\, \bar{z})$ and $\bar{\theta}_3$ are determined by  $\Gamma_3,$  and the condition  $r_1(\bar{\theta}_1)=\bar{r}$ given by (\ref{constraints3})
implies an algebraic constraint which is easily handled. However the constraint on \color{black} $z_1(\theta_c)$ which appears in (\ref{constraints3}) is
more problematic, and an understanding with regard to the geometry of $\Gamma_1$ is helpful in this context.

In considering the geometry of $\Gamma_1$, it is useful
to consider its natural  extension, $\widetilde{\Gamma}_1:=\{ (\tilde{r}_1(\theta_1),\, \tilde{z}_1(\theta_1)) \, |\, -\pi \le \theta_1 \le \pi \}$, where
\begin{equation}\label{extG1}
\begin{pmatrix}\tilde{r}_1(\theta_1)\\\tilde{z}_1(\theta_1)\end{pmatrix}=
\begin{pmatrix}\frac{\sin(\theta_1)-\sqrt{\sin^2(\theta_1)+4\lambda^2a_\ell^2}}{2\lambda}\\
\frac{-1}{2\lambda}\int_{\bar{\theta}_1}^{\theta_1}{\left(\frac{\sin^2(x)}{\sqrt{\sin^2(x)+4\lambda^2a_\ell^2}}-\sin(x)\right)dx}+\bar{z}\end{pmatrix}, \quad \theta_1\in [-\pi,\,\pi],
\end{equation}
see \cite{MScKG} Fig. 1. Recalling  (\ref{constraints1}),  we get from (\ref{extG1}) that
\begin{eqnarray}
&\tilde{r}_{1 \theta_1}(\theta_1)<0, \quad \theta_1 \in (-\pi/2, \, \pi/2), \quad \tilde{r}_{1 \theta_1}(\pm \pi/2)=0, \quad \tilde{r}_{1 \theta_1}(\theta_1)>0, \quad \theta_1 \in [-\pi, -\pi/2)\cup(\pi/2, \pi], \label{r1d} \\[1ex]
&\tilde{z}_{1\,\theta_1}(\theta_1) >0, \quad
\theta_1 \in (-\pi, \,  0), \quad \tilde{z}_{1 \theta_1}(0)=\tilde{z}_{1 \theta_1}(\pm \pi)=0, \quad \tilde{z}_{1 \theta_1}(\theta_1)< 0, \quad \theta_1 \in (0, \, \pi).\label{z1d}
\end{eqnarray}
These inequalities  readily imply that  the curve $\widetilde{\Gamma}_1$ is topologically equivalent to  two half circles  joined at $\theta_1=0$ and aligned along the line $\tilde{r}_1 = a_{\ell}.$
It also easily follows from (\ref{constraints1}), (\ref{extG1}) that the right hand side is larger than the left; namely, $\tilde{r}_1(-\pi/2)-\tilde{r}_1(0)>\tilde{r}_1(0)-\tilde{r}_1(\pi/2) $ and \color{black} $\tilde{z}_1(0)-\tilde{z}_1(-\pi) > \tilde{z}_1(0)-\tilde{z}_1(\pi)$.
Using the formulas in (\ref{extG1}), 
we get that $\int_{-\pi}^{\pi} \sqrt{ \tilde{r}_{1 \theta_1}^2(\theta_1) + \tilde{z}_{1 \theta_1}^2(\theta_1)}\, d\theta_1   <\infty,$
and hence $\Gamma_1$, which constitutes a subset of $\widetilde{\Gamma}_1$, can be readily reparametrized by $\alpha \in [0,1]$ in accordance with (\ref{bc_S1})\color{black}, (\ref{const_S}),
(\ref{V_S3})--(\ref{bc_S4}).
As mentioned earlier, $\Gamma_1$ describes a portion of a nodary curve, and (\ref{conH}), (\ref{H_S}) can be checked directly to hold from the formulas in (\ref{extG1}) and the parametrization with respect to $\alpha$.
Moreover, (\ref{solutions})--(\ref{constraints1}) can be seen to imply (\ref{f1}), and subsequently,
 $\theta_{1 \alpha}(\alpha)<0,$ $\alpha \in [0,1],$  and $\Gamma_1$ \color{black} describes the meridian profile of a nodoid
 with  negative constant mean curvature  \cite{Del41,EHLM}.

 From  (\ref{r1d})--(\ref{z1d}),  (\ref{solutions})--(\ref{constraints1}), it follows that
  $\widetilde{\Gamma}_1$ is bounded, respectively, from below and on either side when $\bar{\theta}_1 \le \theta_1 \le 0$, by the lines
 \begin{equation} \label{lines3a}
 z=\max\{ \bar{z}, \, \bar{z}+\tan(\bar{\theta}_3)(r-\bar{r})\}, \quad r=\tilde{r}_1(0), \quad r=\tilde{r}_1(\theta_{\max}), \quad \theta_{\max}=\max\{\bar{\theta}_1, \, -\pi/2\},
 \end{equation}
 and when $0 \le {\theta}_1 \le \theta_c$, by the lines
 \begin{equation} \label{lines3b}
 z=\tilde{z}_1(\theta_c) \ge \tilde{z}_1(\pi), \quad r=\tilde{r}_1(\pi/2), \quad r=\tilde{r}_1(0).
 \end{equation}
 Thus in particular, $\tilde{r}_1(\pi/2) \le \tilde{r}_1(\theta_1) \le \tilde{r}_1(-\pi/2)$  for  $\theta_1 \in [\bar{\theta}_1, \theta_c],$ \color{black}
and   (\ref{solutions})--(\ref{constraints1}), (\ref{constraints3}), (\ref{extG1})--(\ref{r1d}) imply that
  $$0< \tilde{r}_1(\pi/2) < \tilde{r}_1(\pm \pi)= \tilde{r}_1(0)=a_{\ell} <  \tilde{r}_1(\bar{\theta}_1)=\bar{r} \le \tilde{r}_1(\theta_{\max}) \le \tilde{r}_1(-\pi/2), \quad 0<a_{\ell} < \bar{r}< 1.$$
 Since $\Gamma_1 = \widetilde{\Gamma}_1|_{\theta_1\in[\bar{\theta}_1, \theta_c]}$, \color{black} these bounds  hold also for $\Gamma_1,$ which guarantee that the hole persists.

 It follows from (\ref{constraints1}), (\ref{extG1}) that if $-1<\lambda<0,$ then $1 < -1/{\lambda} < \tilde{r}_1(-\pi/2):= - (1 + \sqrt{1 + 4 \lambda^2 a_{\ell}^2})/(2\lambda) $.
 Thus, since $r_1(\theta_{\max})=\tilde{r}_1(-\pi/2)$ when $\bar{\theta}_1 \le -\pi/2$,  in order to guarantee that (\ref{0r1}) is satisfied by (\ref{solutions}) in conjunction with the constraints stated up to now,  we need to impose  the additional constraint
 \begin{equation} \label{constraints4}
 {r}_1(-\pi/2)<1 \quad \hbox{\, if \,} \quad \bar{\theta}_1 < -\pi/2, \quad\quad \rm{(C7)}
 \end{equation}
 in order to guarantee that $\Gamma_1$ does not intersect the inert cylinder. \color{black}
 \par Similarly, it follows from (\ref{z1d}) that
 $$\tilde{z}_1(\pi) \le \tilde{z}_1(\theta_c)=0< \tilde{z}_1(\theta_1) < \tilde{z}_1(0), \quad \theta_1 \in (0, \theta_c),\quad\quad
 \tilde{z}_1(-\pi) < \tilde{z}_1(\bar{\theta}_1)=\bar{z} <\tilde{z}_1(\theta_1) < \tilde{z}_1(0), \quad \theta_1 \in (\bar{\theta}_1,0),$$
  where we have made use of (\ref{angles1}), (\ref{constraints3}), and  the definition of $\widetilde{\Gamma}_1$. \color{black}
 Thus in particular, recalling that $\bar{z}>0$ by (\ref{bar3}), we obtain \color{black} $z_1(\theta_1)=\tilde{z}_1(\theta_1)> 0$ for  $\theta_1 \in [\bar{\theta}_1, \theta_c),$ and $z_1(\theta_c)=\tilde{z}_1(\theta_c)=0;$ hence (\ref{0z}) holds and $\Gamma_1$ intersects the substrate only at $(r_1(\theta_c),z_1(\theta_c))=(r^\ast_1,0)$. \color{black}
 The above implies that $\theta_c= \sup\{ \, {\theta}_1 \, | \,  \tilde{z}_1(\theta_1)>0, \; 0 \le \theta_1 \le \pi \, \}$, and using (\ref{solutions}),(\ref{constraints1}) it follows that
 
 \begin{lemma} \label{z1piSufficient} The requirement
 \begin{equation}\label{z1_theta_c}
z_1(\pi) \le 0, 
\end{equation}
 constitutes a necessary and sufficient condition in order to guarantee that  $z_1(\theta_c)=0,$ as required by (\ref{constraints3}), holds for some $\theta_c \in (0,\, \pi].$
  \end{lemma}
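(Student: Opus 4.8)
The plan is to exploit the monotonicity structure of the extended curve $\widetilde{\Gamma}_1$ recorded in (\ref{r1d})--(\ref{z1d}). First note that, since $z_1$ is a priori only defined for $\theta_1\in[\bar{\theta}_1,\theta_c]$, the symbol $z_1(\pi)$ in (\ref{z1_theta_c}) must be read as $\tilde{z}_1(\pi)$; and since $\widetilde{\Gamma}_1$ does not depend on $\theta_c$, the question ``does $z_1(\theta_c)=0$ hold for some $\theta_c\in(0,\pi]$?'' is equivalent to ``does $\tilde{z}_1$ vanish somewhere on $(0,\pi]$?''. I would begin by recording from (\ref{z1d}) that $\tilde{z}_1$ is strictly increasing on $[-\pi,0]$ and strictly decreasing on $[0,\pi]$, with a unique maximum at $\theta_1=0$, and that since $\bar{\theta}_1\in(-\pi,0)$ by (\ref{angles1}) and $\tilde{z}_1(\bar{\theta}_1)=z_1(\bar{\theta}_1)=\bar{z}>0$ by (\ref{solutions}), (\ref{bar3}), the increase on $[\bar{\theta}_1,0]$ forces $\tilde{z}_1(0)>\bar{z}>0$.

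For sufficiency, assume $\tilde{z}_1(\pi)\le 0$. Then $\tilde{z}_1(0)>0\ge\tilde{z}_1(\pi)$, so continuity and the intermediate value theorem yield a $\theta_c\in(0,\pi]$ with $\tilde{z}_1(\theta_c)=0$; strict monotonicity on $[0,\pi]$ makes this $\theta_c$ unique and forces $\tilde{z}_1>0$ on $[0,\theta_c)$, which together with the increase on $[\bar{\theta}_1,0]$ gives $z_1(\theta_1)=\tilde{z}_1(\theta_1)>0$ for $\theta_1\in[\bar{\theta}_1,\theta_c)$, so that $z_1(\theta_c)=0$ exactly as demanded by (C6) in (\ref{constraints3}). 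For necessity I would argue the contrapositive: if $\tilde{z}_1(\pi)>0$, then since $\tilde{z}_1$ increases on $[\bar{\theta}_1,0]$ and decreases on $[0,\pi]$ its minimum over $[\bar{\theta}_1,\pi]$ equals $\min\{\bar{z},\,\tilde{z}_1(\pi)\}>0$, so $\tilde{z}_1$ never vanishes on $(0,\pi]$ and no admissible $\theta_c$ exists. Equivalently, this matches the identification $\theta_c=\sup\{\theta_1\in[0,\pi]:\tilde{z}_1(\theta_1)>0\}$ noted just above: that supremum is a genuine zero of $\tilde{z}_1$ precisely when $\tilde{z}_1(\pi)\le 0$.

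I do not anticipate a genuine obstacle. Once the sign pattern (\ref{z1d}) of $\tilde{z}_{1\theta_1}$ is in hand --- and that computation has already been performed above --- the argument is a short bookkeeping exercise built on the intermediate value theorem and strict monotonicity. The only point needing a little care is the interpretation of $z_1(\pi)$ through the extension $\widetilde{\Gamma}_1$ together with the observation that the existence question is independent of $\theta_c$, which is exactly what legitimizes the reduction to locating a single zero of a monotone function.
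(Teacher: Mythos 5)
Your proposal is correct and follows essentially the same route as the paper: the paper's (largely implicit) argument also rests on the sign pattern (\ref{z1d}) of $\tilde{z}_{1\theta_1}$, the fact that $\tilde{z}_1(\bar{\theta}_1)=\bar{z}>0$ forces $\tilde{z}_1(0)>0$, and the identification $\theta_c=\sup\{\theta_1\in[0,\pi]\,|\,\tilde{z}_1(\theta_1)>0\}$, so that a zero of the decreasing branch on $(0,\pi]$ exists precisely when $\tilde{z}_1(\pi)\le 0$. Your explicit intermediate-value/monotonicity bookkeeping, including the contrapositive for necessity, is exactly the intended reasoning.
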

Thus in particular, we obtain
\begin{cor} \label{z1piSection6} The equality $z_1(\pi)=0$ is achieved if and only if $\theta_c=\pi$.  
\end{cor}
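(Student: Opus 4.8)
The plan is to read off the corollary from Lemma \ref{z1piSufficient} together with the monotonicity properties of the extended curve $\widetilde{\Gamma}_1$ recorded in (\ref{z1d}); no new estimate is required.

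First I would dispose of the implication $\theta_c=\pi \Rightarrow z_1(\pi)=0$. Since $\Gamma_1=\widetilde{\Gamma}_1|_{\theta_1\in[\bar{\theta}_1,\theta_c]}$, one has $z_1(\pi)=\tilde{z}_1(\pi)=\tilde{z}_1(\theta_c)=z_1(\theta_c)$, and the constraint (C6) in (\ref{constraints3}) gives $z_1(\theta_c)=0$; hence $z_1(\pi)=0$.

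For the converse, assume $z_1(\pi)=0$, so in particular $z_1(\pi)\le 0$, and Lemma \ref{z1piSufficient} provides some $\theta_c\in(0,\pi]$ with $z_1(\theta_c)=0$. I would then invoke the characterization $\theta_c=\sup\{\,\theta_1\in[0,\pi] : \tilde{z}_1(\theta_1)>0\,\}$ established just before Lemma \ref{z1piSufficient}. By (\ref{z1d}), $\tilde{z}_{1\,\theta_1}<0$ on $(0,\pi)$, so $\tilde{z}_1$ is strictly decreasing on $[0,\pi]$; and $\tilde{z}_1(0)>\tilde{z}_1(\bar{\theta}_1)=\bar{z}>0$, because $\bar{\theta}_1\in(-\pi,0)$ by (\ref{angles1}), $\tilde{z}_1$ is increasing on $(-\pi,0)$ by (\ref{z1d}), and $\bar{z}>0$ by (\ref{bar3}). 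Combining $\tilde{z}_1(0)>0$, strict monotonicity on $[0,\pi]$, and $\tilde{z}_1(\pi)=0$ yields $\tilde{z}_1(\theta_1)>0$ for every $\theta_1\in[0,\pi)$, so the supremum above equals $\pi$; that is, $\theta_c=\pi$.

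There is no real obstacle here: the argument is a short deduction once Lemma \ref{z1piSufficient} and the supremum characterization of $\theta_c$ are in hand. The only points demanding mild care are using the \emph{strict} decrease of $\tilde{z}_1$ on $[0,\pi]$ (so that vanishing at $\theta_1=\pi$ forces positivity throughout $[0,\pi)$) and checking $\tilde{z}_1(0)>0$, both of which are immediate from (\ref{z1d}), (\ref{angles1}), and (\ref{bar3}).
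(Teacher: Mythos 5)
Your argument is correct and follows exactly the route the paper intends: the forward direction is immediate from constraint (C6), and the converse uses the characterization $\theta_c=\sup\{\theta_1\in[0,\pi]\,:\,\tilde{z}_1(\theta_1)>0\}$ together with the strict monotonicity of $\tilde{z}_1$ on $[0,\pi]$ from (\ref{z1d}), which is precisely what the paper's ``Thus in particular'' is appealing to. No gaps.
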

The above corollary will be used later in Section \ref{asymptotic analysis}.\color{black}
\bigskip
\par From the geometry of the curve $ \widetilde{\Gamma}_1$, it follows that   the curve $\Gamma_1 \subset \widetilde{\Gamma}_1$ is non-self-intersecting.
We require that $\Gamma_1 \cap \Gamma_2 \cap \Gamma_3=(\bar{r}, \, \bar{z})$, and we have seen above that $\Gamma_2 \cap \Gamma_3=(\bar{r}, \, \bar{z})$.
 So it remains to guarantee that there are no additional intersections of $\Gamma_1$ with $\Gamma_2$ or with $\Gamma_3$.
By looking at the bounds given above on $\Gamma_1$ and $\Gamma_2$, we may conclude that  in fact  $\Gamma_1 \cap \Gamma_2=(\bar{r}, \, \bar{z}).$
 With respect to possible additional intersections of $\Gamma_1$ with $\Gamma_3$, as this appears more difficult to treat
 analytically in complete generality,   we may simply add it to set of constraints that need to be satisfied, in order to conclude that (\ref{solutions}) indeed constitutes a steady state solution, for some given set of parameters, namely
 \begin{equation} \label{constraints5}
 \Gamma_1 \cap \Gamma_3 = (\bar{r},\, \bar{z}).\quad\quad \rm{(C8)}
 \end{equation}
In particular, (\ref{constraints5}) guarantees that  $r_1^* < r_3^*$ as required by (\ref{r0z0}). Note however that if \color{black}
\begin{equation} \label{a_ell_A}
a_{\ell} <A,\quad\quad \rm{(C8')}
\end{equation}
then the bounds on $\Gamma_1$ and $\Gamma_3$ given earlier imply that (\ref{constraints5}) holds. Thus (\ref{a_ell_A}) provides a simple sufficient condition which guarantees that (\ref{constraints5}) holds. 
\bigskip
\par In summary from the discussion above we may conclude
\begin{thm}\label{49} Given $\beta\in (\frac{\pi}{2},\pi)$, $\theta_c\in(0,\pi]$, the expression given in (\ref{solutions}) constitutes a steady state solution to the problem formulated in (\ref{meridian1})--(\ref{l4a}), iff the parameters $\bar{r}$, \color{black} $A,$   $\lambda$,  $a_\ell$ satisfy the additional constraints \rm{(C1)}--(C8) given in (\ref{constraints1}), (\ref{constraints2}), (\ref{constraints3}), (\ref{constraints4}), (\ref{constraints5}). Moreover, if $\rm{(C8')}$ \color{black} given in (\ref{a_ell_A}) holds, then (C8) given in (\ref{constraints5}) is implied. \end{thm}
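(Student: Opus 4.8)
The plan is to present Theorem~\ref{49} as the consolidation of the curve-by-curve analysis carried out in this subsection, splitting the argument into the two implications of the ``iff'' and then the ``moreover'' clause. The necessity of the form (\ref{solutions}) itself --- that every steady state, written in the angle variables, has this parametric form with the sole freedom $(\bar r,A,\lambda,a_\ell)$ --- has already been obtained: integrating (\ref{su}) and using the boundary data pins the constants of integration ($C_3=r_3^\ast=A$ by (\ref{AC3}), $C_2=-\lambda$, and $C_1=-\lambda a_\ell^2$ by Lemma~\ref{lemma_al}). So for the forward implication it remains to show that a set of parameters for which (\ref{solutions}) is a steady state of (\ref{meridian1})--(\ref{l4a}) must obey (C1)--(C8). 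Here (C1)--(C3) are exactly Lemma~\ref{lemma_G3}, Lemma~\ref{lemma_al}, and Lemma~\ref{lambda_neg}; (C4) is forced by the triple-junction endpoint requirement $r_2(\bar\theta_2)=\bar r$ in (\ref{bc_S1}); (C5)--(C6) are forced by $r_1(\bar\theta_1)=\bar r$ and by $\Gamma_1$ meeting the substrate, $z_1(\theta_c)=0$; (C7) is forced because, when $\bar\theta_1<-\pi/2$, the nodary arc bulges out to $r=\tilde r_1(-\pi/2)$, so (\ref{0r1}) and the requirement that $\Gamma_1$ not touch the cylinder demand $r_1(-\pi/2)<1$; and (C8) is forced because the only admissible intersection of $\Gamma_1$ with $\Gamma_3$ is the triple junction.

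For the converse I would assume (C1)--(C8) and verify directly that (\ref{solutions}) solves (\ref{meridian1})--(\ref{l4a}), treating $\Gamma_3$, then $\Gamma_2$, then $\Gamma_1$ as in the paragraphs preceding the theorem. For each curve: (C1)--(C3) guarantee the explicit formulas are well defined; each curve has finite length with non-vanishing tangent, so it can be reparametrized by normalized arc length and (\ref{const_S}), (\ref{V_S3}) hold; $H_3\equiv 0$ and $H_1\equiv H_2\equiv\lambda$ are checked from the formulas, so (\ref{conH}), (\ref{H_S}), the chemical-potential continuity (\ref{10bc}), and the zero-flux conditions (\ref{bca}) hold (the last because $H_{i\alpha}\equiv 0$); the endpoints match via (C4)--(C6) and the identifications $r_3^\ast=A$, $r_1^\ast=r_1(\theta_c)$, $z_2^\ast=z_2(0)$; the Herring angle relations (\ref{bc_S3})--(\ref{bc_S5}) hold by the prescription of $\bar\theta_1,\bar\theta_2,\bar\theta_3$ in (\ref{solutions}); the sign and range conditions (\ref{0r1})--(\ref{endsne}) follow from the monotonicity (\ref{r1d})--(\ref{z1d}) together with (C7); and the intersection bookkeeping (no self-intersections; $\Gamma_2\cap\Gamma_3=\Gamma_1\cap\Gamma_2=(\bar r,\bar z)$) follows by comparing the bounding lines (\ref{lines3}), (\ref{lines2}), (\ref{lines3a})--(\ref{lines3b}), while the one remaining intersection requirement $\Gamma_1\cap\Gamma_3=(\bar r,\bar z)$ is exactly (C8).

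For the ``moreover'' clause, assuming $a_\ell<A$ I would argue that (\ref{lines3}) confines $\Gamma_3$ to $\{A\le r\le\bar r,\ z\le\bar z\}$, with $z=\bar z$ attained only at $(\bar r,\bar z)$; the portion of $\Gamma_1$ with $\bar\theta_1\le\theta_1\le 0$ lies in $\{z\ge\bar z\}$ by (\ref{lines3a}) and thus meets $\Gamma_3$ only at the triple junction, while the portion with $0\le\theta_1\le\theta_c$ lies in $\{r\le a_\ell\}\subset\{r<A\}$ by (\ref{lines3b}) and (C8'), which is disjoint from $\Gamma_3$. Hence (C8) follows from (C8'). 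The only genuinely non-mechanical point --- and the reason (C8) is retained as a hypothesis rather than derived in general --- is controlling the possible second intersection of the nodary arc $\Gamma_1$ with the catenoid $\Gamma_3$ when $a_\ell\ge A$; the bounding-line estimates yield (C8') as a clean sufficient condition but not much beyond it, and a sharper criterion would require a finer comparison of the two explicit profiles. Everything else is routine tracking of the formulas in (\ref{solutions}) and of the monotonicity and bounding-line facts already in hand.
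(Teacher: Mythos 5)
Your proposal is correct and follows essentially the same route as the paper, which establishes Theorem \ref{49} precisely "from the discussion above": the curve-by-curve verification of $\Gamma_3$, $\Gamma_2$, $\Gamma_1$, the identification of (C1)--(C8) as the exact obstructions, and the bounding-line argument (\ref{lines3}), (\ref{lines3a})--(\ref{lines3b}) showing that $a_\ell<A$ forces $\Gamma_1\cap\Gamma_3=(\bar r,\bar z)$. Your reorganization into necessity, sufficiency, and the "moreover" clause is a faithful and slightly more explicit packaging of the same argument.
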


\subsection{Parameters and Constraints}\label{parameters and constraints}

\par
In Subsection \ref{analytic solutions}, we demonstrated that for given values of the physical parameters, $\beta\in (\frac{\pi}{2},\pi)$ and $\theta_c\in(0,\pi],$ the set of steady states could be prescribed via a set of analytic expressions and constraints, which could be formulated in terms of $4$ parameters $\bar{r}$, $A,$   $\lambda$,  $a_\ell$, see Theorem \ref{49}. Dealing with $4$ parameters in addition to $\beta$ and $\theta_c$ and a variety of constraints is clearly somewhat awkward. Accordingly in this subsection we find it convenient to focus on $2$ parameters, $A$ and $\sigma,$ where $\sigma$ denotes the arclength of $\Gamma_3.$ We then demonstrate that the $4$ parameters listed above can all be expressed in terms of $A,\, \sigma$ and $\beta,$ and afterwards we indicate how the parametric constraints (C1)--(C5),(C7),$\rm{(C8')}$ given in (\ref{constraints1}), (\ref{constraints2}), (\ref{constraints3}), (\ref{constraints4}), (\ref{a_ell_A}) can be formulated in terms of $A,\, \sigma$ and $\beta.$ Thereafter, we consider the constraint (C6) given in (\ref{constraints3}) which we formulate in terms of the parameters $A,\, \sigma,\, \beta$ and $\theta_c$, and then we similarly formulate the constraint (\ref{z1_theta_c}) in terms of the parameters $A,\, \sigma$ and $\beta$. Finally the parametric expressions for (C1)--(C5),(C7),$\rm{(C8')}$ and (\ref{z1_theta_c}) allow us to define an \textit{admissible region}, $\Lambda(A,\sigma,\beta)$, such that each element of $\Lambda(A,\sigma,\beta)$  prescribes a unique steady state solution given by (\ref{solutions}) for some uniquely defined $\theta_c \in (0, \pi]$.\color{black}
\bigskip
\par Let us now consider the parameters $A$ and $\sigma.$ It follows from (\ref{constraints1}) that we need only to consider
\begin{equation}
A\in(0,1).\label{imp1}
\end{equation}
 Recalling that $\bar{r} \in (A,1)$ by (\ref{constraints1}), we obtain
 from (\ref{star}) that
\begin{equation}
\sigma=\int_{r_3(0)}^{r_3(1)}\sqrt{1+\left(\frac{dz_3}{dr_3}\right)^2}dr_3=\sqrt{\bar{r}^2-A^2},\label{sigma}
\end{equation}
with the following constraint on $\sigma$
\begin{equation}
0<\sigma<\sqrt{1-A^2}.\label{imp2}
\end{equation}

\bigskip
Given $A,\ \sigma$ satisfying (\ref{imp1}),(\ref{imp2}),  and given $\beta\in (\frac{\pi}{2},\pi)$, $\theta_c\in(0,\pi]$,
 we now demonstrate how the $3$ parameters $\bar{r}$, $\lambda,$ $a_\ell,$ as well as the angles $\bar{\theta}_1,$ $\bar{\theta}_2,$ $\bar{\theta}_3,$ and $\bar{z}$, \color{black} can  be expressed in terms of $A$, $\sigma$,  $\beta$, $\theta_c$.
\bigskip\par\noindent
Let us begin with the parameters pertaining to the triple junction, namely: $\bar{r},$ $\bar{z},$ $\bar{\theta}_1,$ $\bar{\theta}_2,$ $\bar{\theta}_3$.
 Using (\ref{constraints1}), (\ref{bar3}), and (\ref{sigma}), we obtain for the triple junction coordinates $\bar{r},\ \bar{z}$ that
  \begin{equation}
\bar{r}=\sqrt{\sigma^2+A^2},\quad\quad \bar{z}=A\,\acosh\left(\frac{\sqrt{\sigma^2+A^2}}{A}\right)\label{rzbar}.
\end{equation}
With regard to the angles at the triple junction $\bar{\theta}_1,$ $\bar{\theta}_2,$ $\bar{\theta}_3$, we get
from (\ref{bar3}), (\ref{rzbar}),  that
\begin{equation}
\bar{\theta}_3=\arcsin\left(\frac{A}{\sqrt{\sigma^2+A^2}}\right),\quad \bar{\theta}_3\in\left(0,\frac{\pi}{2}\right).\label{psi3}
\end{equation}
From  (\ref{bar2}), (\ref{psi3}), we now obtain
\begin{equation}
\bar{\theta}_2=\bar{\theta}_3+\beta-\pi=\arcsin\left(\frac{A}{\sqrt{\sigma^2+A^2}}\right)+\beta-\pi,\quad \sin(\bar{\theta}_2)=\frac{-A\,\cos(\beta)-\sigma\sin(\beta)}{\sqrt{\sigma^2+A^2}},\quad 0<\bar{\theta}_2<\bar{\theta}_3.\label{psi2}
\end{equation}
From (\ref{angles1}), (\ref{psi3}), we get that
\begin{equation}
\bar{\theta}_1=\bar{\theta}_3-\beta=\arcsin\left(\frac{A}{\sqrt{\sigma^2+A^2}}\right)-\beta,\quad \sin(\bar{\theta}_1)=\frac{A\,\cos(\beta)-\sigma\sin(\beta)}{\sqrt{\sigma ^2+A^2}},\quad \bar{\theta}_1\in(-\pi,0).\label{psi1}
\end{equation}

\bigskip\par\noindent

With regard to $\lambda$ and $a_l,$ we first obtain an expression for $\lambda.$ Recalling  (\ref{constraints2}), (\ref{rzbar}) \color{black}we get that
 \begin{equation}
\sqrt{A^2+\sigma^2}=\frac{\sin(\bar{\theta}_2)-\sqrt{\sin^2(\bar{\theta}_2)+4\lambda^2}}{2\lambda},\nonumber
\end{equation}
then using (\ref{constraints1}) and (\ref{psi2}), 
\begin{equation}
\lambda=\frac{A\,\cos(\beta)+\sigma\sin(\beta)}{1-A^2-\sigma^2}.\label{lambda}
\end{equation}
 \color{black} Finally to obtain an expression for $a_\ell,$  we recall (\ref{solutions}) and (\ref{constraints3}). Then (\ref{rzbar}) \color{black} implies that
\begin{equation}
\sqrt{A^2+\sigma^2}=\frac{\sin(\bar{\theta}_1)-\sqrt{\sin^2(\bar{\theta}_1)+4\lambda^2a_\ell^2}}{2\lambda},\nonumber
\end{equation}
and using (\ref{constraints1}), (\ref{psi1}), and (\ref{lambda})  we obtain that 
\begin{equation}
a_\ell=\sqrt{\frac{(2A^2+2\sigma^2-1)A\,\cos(\beta)+\sigma\sin(\beta)}{A\,\cos(\beta)+\sigma\sin(\beta)}}.\label{al}
\end{equation}

\bigskip
\par Next, we formulate the constraints (C1)--(C5),(C7) given in (\ref{constraints1}), (\ref{constraints2}), (\ref{constraints3}), (\ref{constraints4}), as well as the sufficient constraint $\rm{(C8')}$ given in (\ref{a_ell_A}), in terms of $A,\ \sigma,$ $\beta$. Afterwards we return to consider the constraint (C6) given in (\ref{constraints3}) which depends on $A,\ \sigma,$ $\beta,\ \theta_c.$\color{black}
\bigskip 
\par We start by considering (C1)--(C3) given in (\ref{constraints1}). From (\ref{imp1}), (\ref{imp2}), (\ref{rzbar}),
\begin{equation}
0<A<\bar{r}=\sqrt{\sigma^2+A^2}<1.\label{constraint1a}
\end{equation}
From (\ref{lambda}), (\ref{constraint1a}) the constraint $\lambda<0,$ can be formulated as
$$A\cos(\beta)+\sigma\sin(\beta)<0.$$ Since $A^2+\sigma^2<1$ by (\ref{constraint1a}), using (\ref{al}), the constraint $0<a_l<1$ can be formulated as $$(2A^2+2\sigma^2-1)A\,\cos(\beta)+\sigma\sin(\beta)<0,$$ or equivalently,
\begin{equation}
\tan(\beta)>\frac{A}{\sigma}(1-2A^2-2\sigma^2).\label{imp4}
\end{equation}
Note that (\ref{constraint1a})--(\ref{imp4}) imply in particular that $\tan(\beta)>-\frac{A}{\sigma}$. Hence, the constraint $A\,\cos(\beta)+\sigma\sin(\beta)<0$ is redundant. 
\bigskip
\par Let us consider (C4) given in (\ref{constraints2}). Under the assumption that (C1)--(C3) are satisfied, and hence that (\ref{constraint1a})--(\ref{imp4}) hold, (C4) implies the parametric representation of $\lambda$ given in (\ref{lambda}). Note that if (\ref{lambda}),(\ref{constraint1a}),(\ref{imp4}) are satisfied, then the constraint (C4) is also satisfied.
\par Similarly, the constraint (C5) in (\ref{constraints3}) is already satisfied, since (\ref{al})--(\ref{imp4}) imply that $r_1(\bar{\theta}_1)=\bar{r}$.
\bigskip 
\par Next, let us consider (C7) given in (\ref{constraints4}). Using (\ref{psi1}) (see \cite{MScKG} for details), the inequality $\bar{\theta}_1<-\frac{\pi}{2}$ can be formulated as
$$\cos(\beta)<-\frac{A}{\sqrt{\sigma^2+A^2}},$$
then using (\ref{solutions}),(\ref{lambda})--(\ref{imp4}), the constraint $r_1(\theta_1=-\frac{\pi}{2})<1$ can be formulated \cite{MScKG} as $$\cos(\beta)<-\frac{1}{2A}.$$ Thus (C7) given in (\ref{constraints4}) can be expressed as
\begin{equation}
\cos(\beta)<-\frac{1}{2A}\quad \text{if}\quad \cos(\beta)<-\frac{A}{\sqrt{\sigma^2+A^2}}.\label{imp5}
\end{equation}
\\While the constraint (C8) given in (\ref{constraints5})
is not straightforward to formulate as a parametric constraint, using (\ref{al})--(\ref{imp4}), the constraint  $\rm{(C8')}$ given in (\ref{a_ell_A}) may be reformulated \cite{MScKG} as\color{black}
\begin{equation}
\tan(\beta)<\frac{A(1-A^2-2\sigma^2)}{\sigma(1-A^2)}.\label{imp6}
\end{equation}

\smallskip
Finally we consider the constraint (C6) given in (\ref{constraints3}), namely $z_1(\theta_c)=0.$ 
Let us note that by (\ref{solutions}), it can be expressed as
\begin{equation}\label{value_thetac}
z_1(\theta_c)= \bar{z} - \frac{1}{2\lambda} \int_{\bar{\theta}_1}^{\theta_c}{\Biggl[\frac{\sin^2(x)}{\sqrt{\sin^2(x)+4\lambda^2a_\ell^2}}-\sin(x)\Biggr]dx}=0.
\end{equation}   
Since $\bar{z},\bar{\theta}_1$, and the parameters $\lambda,a_\ell$ have all been expressed above in terms of $(A,\sigma,\beta)$, (\ref{value_thetac}) implies a parametric constraint on $(A,\sigma,\beta,\theta_c)$. Recalling Lemma \ref{z1piSufficient}, we obtain that (\ref{value_thetac}) holds for some $\theta_c\in(0,\pi]$ if and only if (\ref{z1_theta_c}) is satisfied, namely $z_1(\pi)\le 0.$ Using (\ref{solutions}), since  $\bar{z},\bar{\theta}_1,\lambda,a_\ell$  have been expressed above in terms of $(A,\sigma,\beta)$, (\ref{z1_theta_c}) implies a parametric constraint on $(A,\sigma,\beta)$.\color{black}
\bigskip
\par Collecting the constraints (C1)--(C5),(C7), $\rm{(C8')}$ formulated in terms of $A,\,\sigma$ and $\beta$ as given in (\ref{constraint1a})--(\ref{imp6}), together with the constraint (\ref{z1_theta_c}), we obtain a set of sufficient conditions for a given $(A, \sigma, \beta)\in \mathbb{R}^2\times(\frac{\pi}{2},\pi)$ to prescribe a unique steady state solution given in (\ref{solutions}) to the problem formulated in (\ref{meridian1})--(\ref{l4a}). In fact, all the conditions above but  $\rm{(C8')}$ are also necessary. Thus, in order to satisfy the conditions listed above we first restrict our set of parameters $(A,\sigma,\beta)$ to the domain $(A, \sigma,\beta)\in \Omega_0\times(\frac{\pi}{2},\pi),$ where $\Omega_0:=\{ (A, \sigma)\,|\, 0<A<1, \, 0<\sigma<\sqrt{1-A^2}\},$ which will be used in Section \ref{asymptotic analysis}. Then the resulting set of parameters is further restricted to the \textit{admissible $A,\sigma,\beta$  region}, $\Lambda=\Lambda(A, \sigma,\beta)$, which is defined as the set of all the parameters $(A, \sigma,\beta)\in \Omega_0\times(\frac{\pi}{2},\pi)$ satisfying the following additional constraints:\color{black}
\begin{equation}\label{constraints}
\begin{cases}
&\tan(\beta)>\frac{A}{\sigma}(1-2A^2-2\sigma^2),\\[1ex]
&\cos(\beta)<-\frac{1}{2A} \quad\hbox{if}\quad \cos(\beta)<-\frac{A}{\sqrt{\sigma^2+A^2}},\\[1ex]
&z_1(\pi)\le 0,\\[1ex]
&\tan(\beta)<\frac{A(1-A^2-2\sigma^2)}{\sigma(1-A^2)}.
\end{cases}
\end{equation}	
Hence, given $(A,\sigma,\beta) \in \Lambda(A, \sigma,\beta),$ there is a unique correspondence with a steady state solution, (\ref{solutions}), where $\theta_c \in (0,\pi]$ is uniquely determined by (\ref{value_thetac}) in accordance with (\ref{z1d}) and Lemma \ref{z1piSufficient},  and where the parameters  in (\ref{solutions}), (\ref{value_thetac}) are given in (\ref{rzbar})--(\ref{al}).
\color{black}
\smallskip
\par The  results above imply the following
\begin{thm} \label{oneone}
	Given  $(\beta,\theta_c) \in (\frac{\pi}{2}, \pi)\times (0, \pi]$, each set of parameters  $(A,\sigma,\beta,\theta_c)$  satisfying  $(A,\sigma,\beta) \in \Lambda(A, \sigma,\beta)$ and (\ref{value_thetac}), prescribes a subset of all possible steady state solutions to (\ref{meridian1})--(\ref{l4a}).
\end{thm}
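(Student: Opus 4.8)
The plan is to read off Theorem~\ref{oneone} from the equivalences already established in Subsections~\ref{analytic solutions} and~\ref{parameters and constraints}; the statement is in essence a repackaging of those results, so the proof is mostly a matter of carefully tracking which constraints have been absorbed into the definition of $\Lambda(A,\sigma,\beta)$ and which one still involves $\theta_c$. First I would recall Theorem~\ref{49}: for fixed $(\beta,\theta_c)\in(\frac{\pi}{2},\pi)\times(0,\pi]$, the profile~(\ref{solutions}) is a bona fide steady state of~(\ref{meridian1})--(\ref{l4a}) precisely when the parameters $\bar r,A,\lambda,a_\ell$ satisfy (C1)--(C8), and that (C8$'$) implies (C8). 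Hence it suffices to produce, for each admissible $(A,\sigma,\beta,\theta_c)$, values of $\bar r,\lambda,a_\ell$ for which (C1)--(C5),(C7),(C8$'$) hold and for which the induced contact angle of $\Gamma_1$ equals the prescribed $\theta_c$.

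Next I would invoke the closed-form expressions~(\ref{rzbar})--(\ref{al}): the derivations preceding them show that $\bar r,\bar z,\bar\theta_1,\bar\theta_2,\bar\theta_3,\lambda,a_\ell$ are forced to be exactly these functions of $(A,\sigma,\beta)$ once (C4), (C5) and the triple-junction relations are imposed, so passing to the variables $(A,\sigma,\beta)$ loses nothing. I would then quote the reformulations~(\ref{constraint1a})--(\ref{imp6}), which recast (C1)--(C3), (C7) and (C8$'$) as the inequalities appearing in~(\ref{constraints}) together with $(A,\sigma)\in\Omega_0$, while (C4) and (C5) become automatic once~(\ref{lambda})--(\ref{al}) are used. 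Consequently $(A,\sigma,\beta)\in\Lambda(A,\sigma,\beta)$ already guarantees (C1)--(C5),(C7),(C8$'$), hence also (C8) by Theorem~\ref{49}; the only constraint not yet accounted for is (C6), namely $z_1(\theta_c)=0$.

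For (C6) I would appeal to the monotonicity in~(\ref{z1d}): since $\tilde z_{1\theta_1}<0$ on $(0,\pi)$, the map $\theta_1\mapsto z_1(\theta_1)=\tilde z_1(\theta_1)$ is strictly decreasing on $[0,\pi]$, with $z_1(0)>\bar z>0$ by~(\ref{z1d}) and~(\ref{bar3}), and $z_1(\pi)\le 0$ built into $\Lambda$. The intermediate value theorem then gives a \emph{unique} $\theta_c\in(0,\pi]$ solving~(\ref{value_thetac}), equal to $\pi$ exactly when $z_1(\pi)=0$ by Corollary~\ref{z1piSection6} (this is the content of Lemma~\ref{z1piSufficient}). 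Thus, with the target value of $\theta_c$ fixed, imposing simultaneously $(A,\sigma,\beta)\in\Lambda(A,\sigma,\beta)$ and~(\ref{value_thetac}) selects exactly those triples in $\Lambda$ whose induced contact angle equals the prescribed $\theta_c$, and each such triple yields, through~(\ref{solutions}) with parameters~(\ref{rzbar})--(\ref{al}), a steady state for that $(\beta,\theta_c)$; as $(A,\sigma)$ ranges over the admissible set these sweep out a collection of steady states.

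Finally, I would explain why the conclusion is ``a subset of'' and not ``all'' steady states: since (C8$'$) is sufficient but not necessary for (C8), a steady state with $a_\ell\ge A$ that nonetheless meets (C8) is not produced by the above scheme. Identifying this as the only source of incompleteness --- and thereby confirming that the constructed family is indeed a (generally proper) subset of the full solution set --- is the one step that requires a deliberate remark rather than a computation; everything else in the proof is the assembly of the facts cited above.
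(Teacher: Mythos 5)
Your proposal is correct and follows essentially the same route as the paper, which proves Theorem \ref{oneone} simply by assembling the preceding results: Theorem \ref{49}, the parametric reduction (\ref{rzbar})--(\ref{al}), the reformulation of (C1)--(C5),(C7),(C8$'$) as the conditions defining $\Lambda$, and the unique determination of $\theta_c$ via (\ref{z1d}) and Lemma \ref{z1piSufficient}. Your closing remark pinpointing (C8$'$) versus (C8) as the reason the theorem only claims ``a subset'' matches the paper's own observation that all conditions except (C8$'$) are also necessary.
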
 \color{black}

\begin{thm} \label{onetwo}
	Given  $\beta \in (\frac{\pi}{2}, \pi)$, each set of parameters  $(A,\sigma,\beta) \in \Lambda(A, \sigma,\beta)$ prescribes a subset of all possible steady state solutions to (\ref{meridian1})--(\ref{l4a}) for some  values of  $\theta_c\in(0,\pi]$.
\end{thm}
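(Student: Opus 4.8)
The plan is to derive Theorem~\ref{onetwo} as an essentially immediate consequence of Theorem~\ref{oneone}, once one observes that the definition of the admissible region $\Lambda(A,\sigma,\beta)$ has been arranged precisely so that a compatible contact angle always exists. Concretely, I would fix $\beta\in(\pi/2,\pi)$ and an arbitrary $(A,\sigma,\beta)\in\Lambda(A,\sigma,\beta)$. Membership in $\Lambda$ forces $(A,\sigma)\in\Omega_0$, so the bounds (\ref{imp1})--(\ref{imp2}) hold and the parameters $\bar r,\bar z,\bar\theta_1,\bar\theta_2,\bar\theta_3,\lambda,a_\ell$ are well defined through (\ref{rzbar})--(\ref{al}); moreover all four lines of (\ref{constraints}) are in force, in particular the third, $z_1(\pi)\le 0$, where $z_1$ is the function appearing in (\ref{solutions}) built from these parameters.

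Next I would produce $\theta_c$. By Lemma~\ref{z1piSufficient}, the inequality $z_1(\pi)\le 0$ is necessary and sufficient for the existence of some $\theta_c\in(0,\pi]$ with $z_1(\theta_c)=0$, i.e.\ with the constraint (\ref{value_thetac}) satisfied. This $\theta_c$ is unique: on $[0,\pi]$ the map $\theta_1\mapsto z_1(\theta_1)$ is strictly decreasing wherever it is nonnegative (since $\tilde z_{1\,\theta_1}(\theta_1)<0$ on $(0,\pi)$ by (\ref{z1d})), so it has exactly one zero, and because $\bar\theta_1<0$ by (\ref{angles1})--(\ref{psi1}) this zero lies in $(0,\pi]$ and in particular exceeds $\bar\theta_1$, so the range $[\bar\theta_1,\theta_c]$ over which $\theta_1$ varies in (\ref{solutions}) is consistent. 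Thus the quadruple $(A,\sigma,\beta,\theta_c)$ satisfies both $(A,\sigma,\beta)\in\Lambda(A,\sigma,\beta)$ and (\ref{value_thetac}).

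Finally I would invoke Theorem~\ref{oneone} for the pair $(\beta,\theta_c)\in(\pi/2,\pi)\times(0,\pi]$: since $(A,\sigma,\beta,\theta_c)$ meets its hypotheses, it prescribes a subset of all possible steady state solutions to (\ref{meridian1})--(\ref{l4a}). As $(A,\sigma,\beta)\in\Lambda(A,\sigma,\beta)$ was arbitrary, this is exactly the claim of Theorem~\ref{onetwo}. (Alternatively, one could bypass Theorem~\ref{oneone} and apply Theorem~\ref{49} directly, checking (C1)--(C5),(C7) via (\ref{constraint1a})--(\ref{imp5}) — noting that $\lambda<0$ comes for free from $\tan\beta>-A/\sigma$, and that (C4),(C5) are automatic because (\ref{lambda}),(\ref{al}) were obtained by imposing exactly those equations — deducing (C8) from (C8')$=$(\ref{imp6}), and supplying (C6) via the argument of the previous paragraph.) Since all analytic content has already been packaged into Theorem~\ref{49} and Lemma~\ref{z1piSufficient}, I do not anticipate a genuine obstacle; the only point demanding care is the bookkeeping around $\theta_c$ — verifying that the value extracted from $z_1(\pi)\le 0$ really lies in $(0,\pi]$ and is consistent with every earlier appearance of $\theta_c$ (the range of $\theta_1$, the identification $r_1^\ast=r_1(\theta_c)\in(0,1)$, and the intersection and self-intersection statements for $\Gamma_1$) — all of which were already settled in Subsection~\ref{analytic solutions}.
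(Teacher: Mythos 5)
Your proposal is correct and follows essentially the same route as the paper, which derives Theorem \ref{onetwo} directly from the preceding discussion: membership in $\Lambda(A,\sigma,\beta)$ supplies (C1)--(C5), (C7), and (C8$'$) via (\ref{constraint1a})--(\ref{imp6}), while the constraint $z_1(\pi)\le 0$ together with Lemma \ref{z1piSufficient} and the monotonicity in (\ref{z1d}) yields a unique $\theta_c\in(0,\pi]$ satisfying (\ref{value_thetac}), so that Theorem \ref{49} (equivalently Theorem \ref{oneone}) applies. Your extra bookkeeping on the uniqueness and admissibility of the extracted $\theta_c$ is consistent with the paper's remark that $\theta_c$ is ``uniquely determined by (\ref{value_thetac}) in accordance with (\ref{z1d}) and Lemma \ref{z1piSufficient}.''
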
 
Below we list the functional dependencies for the parameters needed in defining a solution, given $(A,\sigma,\beta)\in\Lambda(A,\sigma,\beta),$

\begin{equation}\label{parameters}
\begin{cases}
&\bar{r}=\sqrt{\sigma^2+A^2},\quad 0<\bar{r}<1,\quad \quad \bar{z}=A\,\acosh\left(\frac{\sqrt{\sigma^2+A^2}}{A}\right),\quad \bar{z}>0,\\\\
&\bar{\theta}_3=\arcsin\left(\frac{A}{\sqrt{\sigma^2+A^2}}\right),\quad \bar{\theta}_1=\bar{\theta}_3-\beta,\quad \bar{\theta}_2= \bar{\theta}_3 +\beta-\pi,\quad\quad 0<\bar{\theta}_3<\frac{\pi}{2},\; -\pi<\bar{\theta}_1<0,\;0<\bar{\theta}_2<\bar{\theta}_3,\\\\
&\lambda=\frac{A\,\cos(\beta)+\sigma\,\sin(\beta)}{1-A^2-\sigma^2},\quad \lambda<0,\quad\quad a_\ell=\sqrt{\frac{(2A^2+2\sigma^2-1)A\,\cos(\beta)+\sigma\,\sin(\beta)}{A\,\cos(\beta)+\sigma\,\sin(\beta)}},\quad 0<a_\ell<1.
\end{cases}
\end{equation}
In the above,  the range conventions: $\acosh: [1,\infty) \rightarrow [0,\infty)$, $\arcsin: [-1,1]\rightarrow [-\pi/2, \pi/2],$ are being assumed. Accordingly, the functions $\bar{z}$, $\bar{\theta}_1,$ $\bar{\theta}_2,$ $\bar{\theta}_3,$  listed in (\ref{parameters}), are all well-defined and belong to the ranges indicated for $(A,\sigma,\beta) \in \Lambda(A,\sigma,\beta).$\color{black}\smallskip
\par Finally we remark that it is easy to show that the necessary and sufficient conditions stated in Theorem \ref{49} may be reformulated in a more parametric manner as follows
\begin{thm} \label{onethree}
	Given  $(\beta,\theta_c) \in (\frac{\pi}{2}, \pi)\times (0, \pi]$, each set of parameters  $(A,\sigma)\in\Omega_0$ prescribes a steady state solution to (\ref{meridian1})--(\ref{l4a}) given by (\ref{solutions}) for $(\beta,\theta_c)$ iff (\ref{parameters}) holds and
	\begin{equation}\label{constraintsnessuf}
	\begin{cases}
	&\tan(\beta)>\frac{A}{\sigma}(1-2A^2-2\sigma^2),\\[1ex]
	&\cos(\beta)<-\frac{1}{2A} \quad\hbox{if}\quad \cos(\beta)<-\frac{A}{\sqrt{\sigma^2+A^2}},\\[1ex]
	&z_1(\theta_c)=0,\\[1ex]
	&\Gamma_1 \cap \Gamma_3 = (\bar{r},\, \bar{z}).
	\end{cases}
	\end{equation}	
	
\end{thm}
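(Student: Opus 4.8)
The plan is to obtain Theorem~\ref{onethree} as a direct corollary of Theorem~\ref{49} by reparametrizing, for a fixed $\beta\in(\frac{\pi}{2},\pi)$, the solution family in terms of $(A,\sigma)$ in place of $(\bar r,A,\lambda,a_\ell)$, and then translating each of the constraints (C1)--(C8) into the $(A,\sigma,\beta,\theta_c)$ language. Most of this translation has already been carried out in Subsection~\ref{parameters and constraints}, so the proof mainly assembles those computations and checks that the resulting implications are genuine equivalences.

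First I would verify that, for fixed $\beta$, the passage $(\bar r,A,\lambda,a_\ell)\leftrightarrow(A,\sigma)$ is a bijection between the relevant parameter sets. Constraint (C1), namely $0<A<\bar r<1$, makes $\sigma:=\sqrt{\bar r^2-A^2}$ a well-defined positive number with $\bar r=\sqrt{\sigma^2+A^2}$, and $\bar r<1$ is equivalent to $\sigma<\sqrt{1-A^2}$; thus (C1) is precisely the statement $(A,\sigma)\in\Omega_0$ together with $0<\bar r<1$ as recorded in~(\ref{parameters}). Conversely, from $(A,\sigma)\in\Omega_0$ the triple-junction data $\bar r,\bar z,\bar\theta_1,\bar\theta_2,\bar\theta_3$ are the quantities displayed in~(\ref{parameters}), i.e.\ equations (\ref{rzbar})--(\ref{psi1}). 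Next, under (C1)--(C3), I would show that (C4) forces $\lambda$ to equal the expression in~(\ref{lambda}): substituting $\bar r=\sqrt{\sigma^2+A^2}$ and $\sin\bar\theta_2$ from~(\ref{psi2}) turns (C4) into a quadratic in $\lambda$ whose unique root with $\lambda<0$ is~(\ref{lambda}), and the computation is reversible; likewise (C5) forces $a_\ell$ to equal the positive root~(\ref{al}). Hence ``(C1)--(C5) hold'' is equivalent to ``$(A,\sigma)\in\Omega_0$ and (\ref{parameters}) holds'', with the observations (as in Subsection~\ref{parameters and constraints}) that $\lambda<0$ in (C2) is redundant and that $0<a_\ell<1$ in (C3) is equivalent to the first inequality of~(\ref{constraintsnessuf}), $\tan\beta>\frac{A}{\sigma}(1-2A^2-2\sigma^2)$.

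It then remains to rewrite (C6), (C7), (C8). For (C7) I would reproduce the computation of Subsection~\ref{parameters and constraints}: via~(\ref{psi1}) the hypothesis $\bar\theta_1<-\pi/2$ becomes $\cos\beta<-A/\sqrt{\sigma^2+A^2}$, and via~(\ref{solutions}) together with the formulas just obtained for $\lambda,a_\ell$ the conclusion $r_1(-\pi/2)<1$ becomes $\cos\beta<-1/(2A)$; this is the second line of~(\ref{constraintsnessuf}). The conditions (C6), namely $z_1(\theta_c)=0$, and (C8), namely $\Gamma_1\cap\Gamma_3=(\bar r,\bar z)$, are carried over verbatim as the third and fourth lines of~(\ref{constraintsnessuf}); here one notes that once~(\ref{parameters}) holds the curve $\Gamma_1$ (and hence the function $z_1$) appearing in these conditions is precisely the nodary arc given by~(\ref{solutions}) with all of its parameters expressed through $(A,\sigma,\beta)$, so (C6) and (C8) are now honest constraints on $(A,\sigma,\beta,\theta_c)$. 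Collecting these equivalences and invoking Theorem~\ref{49} yields the stated ``iff''.

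The step demanding the most care is checking that the algebraic reformulations of (C3), (C4), (C5), (C7) are two-sided rather than one-way: that in each quadratic the sign of the root is correctly pinned down by $\lambda<0$ and by positivity of the $r_i$, and that the radicand defining $a_\ell$ in~(\ref{al}) is nonnegative exactly when $0<a_\ell<1$. The one condition that genuinely resists an algebraic description is (C8): only the sufficient condition (C8$'$), equivalently~(\ref{imp6}), is algebraic, so (C8) must be retained in geometric form, and it is precisely the use of (C8) rather than (C8$'$) that keeps the characterization in Theorem~\ref{onethree} necessary as well as sufficient.
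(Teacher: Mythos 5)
Your proposal is correct and follows essentially the same route the paper intends: the paper states Theorem \ref{onethree} as an easy reformulation of Theorem \ref{49} via the parametric translations already carried out in Subsection \ref{parameters and constraints}, and your argument is precisely that translation made explicit, including the key observations that $\lambda<0$ is redundant given the translated (C3), that (C4)--(C5) pin down $\lambda$ and $a_\ell$ as in (\ref{parameters}), and that retaining (C8) in geometric form (rather than the merely sufficient $\rm{(C8')}$) is what preserves necessity.
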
 \color{black}

\subsection{The Energy and Volume at Steady State}\label{energy volume_ss}
\hfill
\par It \color{black}follows from Lemma \ref{dedt} that $\frac{d}{dt}\mathbb{E} \le 0$  for $t > 0$, and $\frac{d}{dt}\mathbb{E} = 0$ \color{black} at steady state. Using (\ref{etot}), (\ref{aex}) and introducing the variables $\theta_i,\ i=1,2,3$,   the total free energy at steady state
may be expressed as
\begin{equation}
\mathbb{E}=\pi\gamma_{grs}+2\pi \gamma_{ex}\left(\int_{\bar{\theta}_1}^{\theta_c}{r_1\sqrt{r_{1\theta_1}^2+z_{1\theta_1}^2}d\theta_1}+\int_{0}^{\bar{\theta}_2}{r_2\sqrt{r_{2\theta_2}^2+z_{2\theta_2}^2}d\theta_2}\right)+
\pi\gamma_{ex} r_1^{2}(\theta_c)\cos(\theta_c)+ 2 \pi\gamma_{gb}\int_{\bar{\theta}_3}^{\frac{\pi}{2}}{r_3\sqrt{r_{3\theta_3}^2+z_{3\theta_3}^2}d\theta_3}. \label{ee}
\end{equation}
For    $\mathbb{E}_{eff}=(\mathbb{E}-\pi\gamma_{grs})/({\pi\gamma_{ex}})$, the effective energy,    we obtain that  at steady state,
\begin{equation}
\mathbb{E}_{eff}=2\left(\int_{\bar{\theta}_1}^{\theta_c}{r_1\sqrt{r_{1\theta_1}^2+z_{1\theta_1}^2}d\theta_1}+\int_{0}^{\bar{\theta}_2}{r_2\sqrt{r_{2\theta_2}^2+z_{2\theta_2}^2}d\theta_2}\right)+
\cos(\theta_c)r_1^{2}(\theta_c)+2m\int_{\bar{\theta}_3}^{\frac{\pi}{2}}{r_3\sqrt{r_{3\theta_3}^2+z_{3\theta_3}^2}d\theta_3},\label{eeff}
\end{equation}
where $m:={\gamma_{gb}}/{\gamma_{ex}}.$
It also follows easily from Lemma \ref{dedt} that $\frac{d}{dt}\mathbb{E}_{eff} \le 0$  for $t > 0$, and $\frac{d}{dt}\mathbb{E}_{eff} = 0$  at steady state.
\par According to Lemma \ref{dvdt}, $\frac{d}{dt}\mathbb{V} = 0$ and hence $\mathbb{V}(t)=\mathbb{V}(0)$ for $t\ge 0$. Therefore, the volume  at steady state is determined by the volume of the initial conditions. Using (\ref{volume}) and the variables $\theta_i,\ i=1,2$, we obtain that at steady state,
\begin{equation}
\mathbb{V}=2\pi\left(\int_{\theta_c}^{\bar{\theta}_1}\ z_1r_1r_{1\theta_1}d\theta_1+\int_{\bar{\theta}_2}^{0}\ z_2r_2r_{2\theta_2}d\theta_2\right).\label{volume2}
\end{equation}
\par Note that the energy decay and volume conservation imply additional constraints to the list of necessary and sufficient conditions, given by (\ref{constraints}), for steady state solutions to (\ref{meridian1})--(\ref{l4a}) arising from prescribed initial conditions, see (\ref{ic}). However, we shall not undertake a complete study of the implications of the energy and volume constraints in the present study.

\section{Numerical Steady State Solutions}\label{numerical solutions}

\vspace{1mm}
In Subsection \ref{analytic solutions}, a set of parametric necessary and sufficient conditions, $\rm{(C1)-(C8)}$,  were derived, which prescribe the parameters used in (\ref{solutions}) corresponding to steady  state solutions to (\ref{meridian1})--(\ref{l4a}) for given values of  $\beta\in\left(\frac{\pi}{2},\pi\right),\theta_c\in(0,\pi]$ which are determined by some initial conditions (\ref{ic}).  Afterwards, a somewhat simpler to formulate set of sufficient parametric conditions, $\rm{(C1)-(C7), (C8')}$ were derived, which could be expressed  analytically via condition (\ref{value_thetac}) and the set of four conditions in (\ref{constraints}) to be satisfied by  $(A, \sigma)\in \Omega_0$. Using MATLAB, we calculated $A-\sigma$ curves corresponding to steady states for various values of $(\beta,\theta_c)$ based on the sufficient conditions, although this may imply that we overlooked some steady states. See Fig. \ref{SigmaAcombined}.
We then focused on the case of the physically realistic values $\beta=1.72,\ \theta_c=11\pi/18,$ and identified an $A-\sigma$ curve corresponding to those values based on the sufficient conditions, which is portrayed in Fig. \ref{VolumeSigmaA}a. For the three points indicated on the $A-\sigma$ curve in Fig. \ref{VolumeSigmaA}a, the corresponding steady states and their volumes were calculated based on the formulas in \S\ref{stationary}, see Fig. \ref{VolumeSigmaA}b. For details of the numerical procedures, see \cite{MScKG}.
\begin{figure}[H]
	\centering
	\includegraphics[width=.9\textwidth]{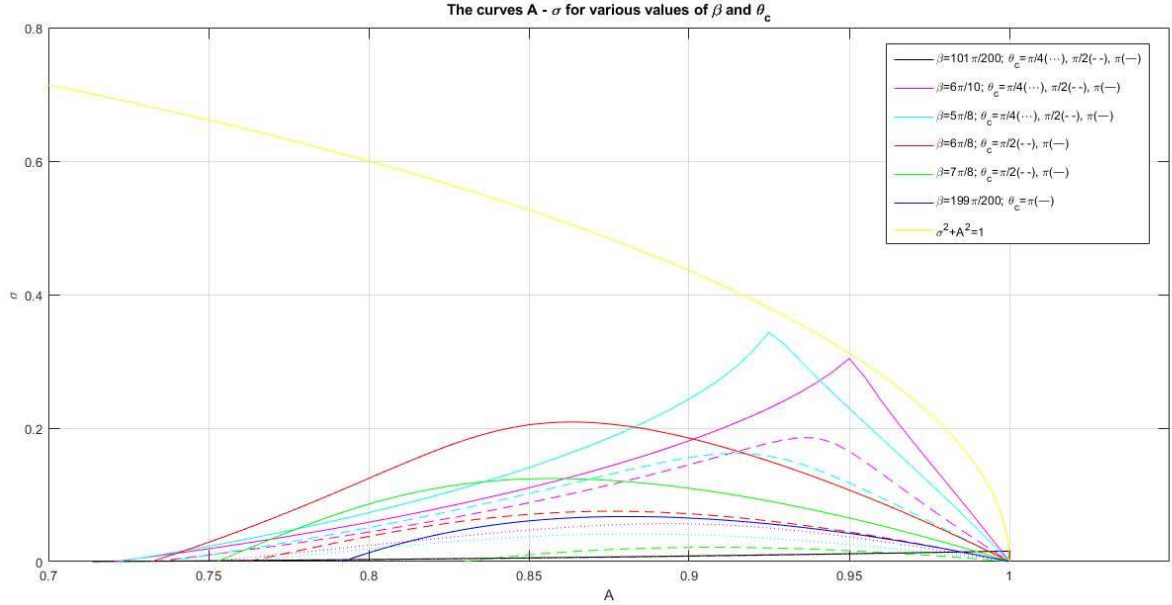}
	\caption{The $A-\sigma$ curves which correspond to the steady state solutions which were obtained for various values of $\beta$ and $\theta_c$. These curves appear to be  continuous and
		describable as a graph of a function $\sigma=\sigma(A; \beta, \theta_c)$ with $\sigma(A;\beta,\theta_c)\rightarrow 0$ as $A\rightarrow 1^- .$  For a given value of $\beta\in(\frac{\pi}{2},\pi),$ the $A-\sigma$ curves corresponding to different $\theta_c$ values do not intersect;  more specifically, the curves corresponding to smaller values of $\theta_c$ are enclosed by the curves corresponding to larger values of $\theta_c.$ Moreover, the $A-\sigma$ curves appear to become more singular as  $\beta$ decreases or as  $\theta_c$ increases.}\label{SigmaAcombined}
\end{figure}
\noindent

\begin{figure}[H]
	\centering
	\begin{minipage}[h]{0.44\textwidth}
		\centering
		\textbf{a}
		\includegraphics[width=.9\textwidth]{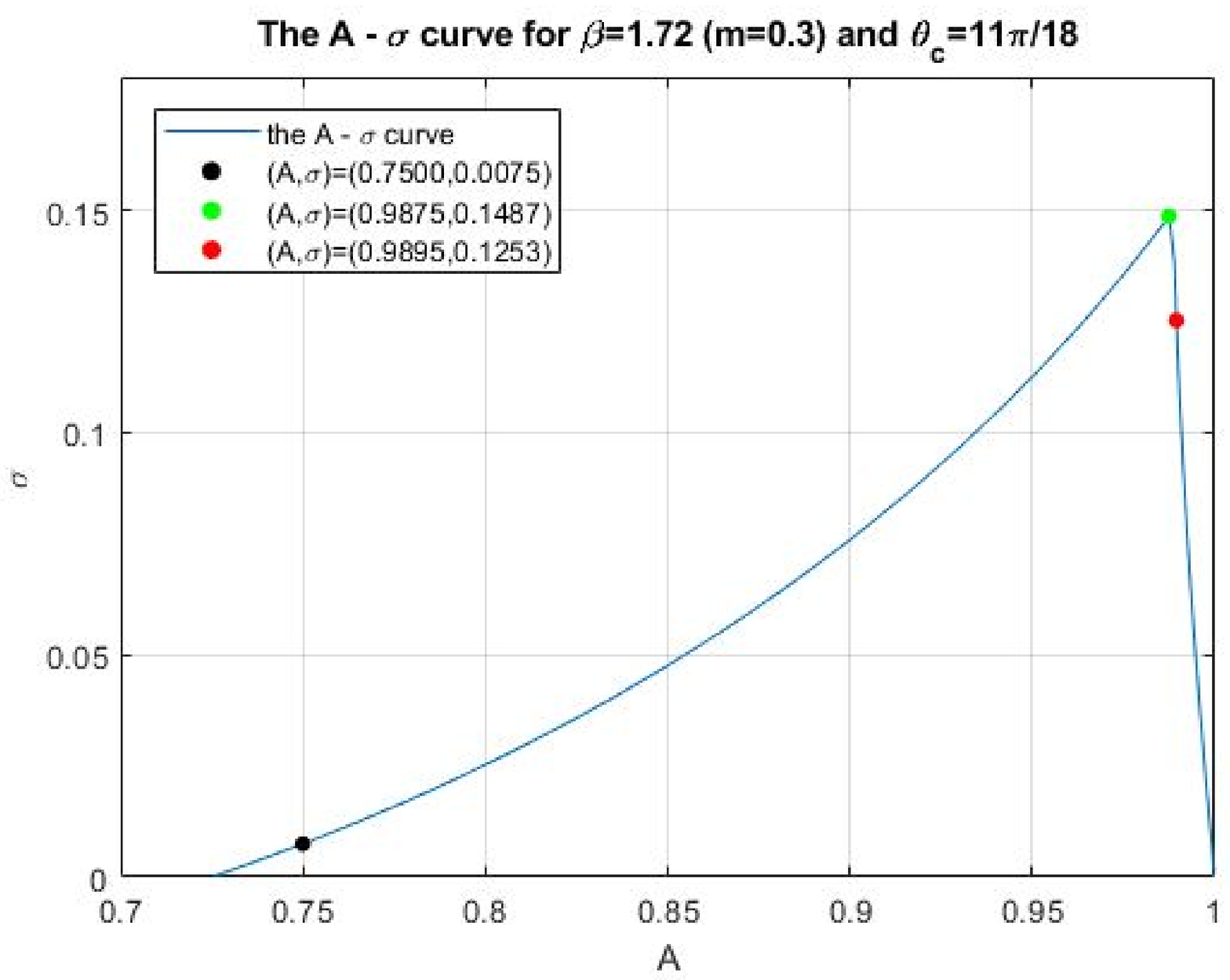}
	\end{minipage}
	\hfill
	\begin{minipage}[h]{0.44\textwidth}
		\centering
		\textbf{b}
		\includegraphics[width=.9\textwidth]{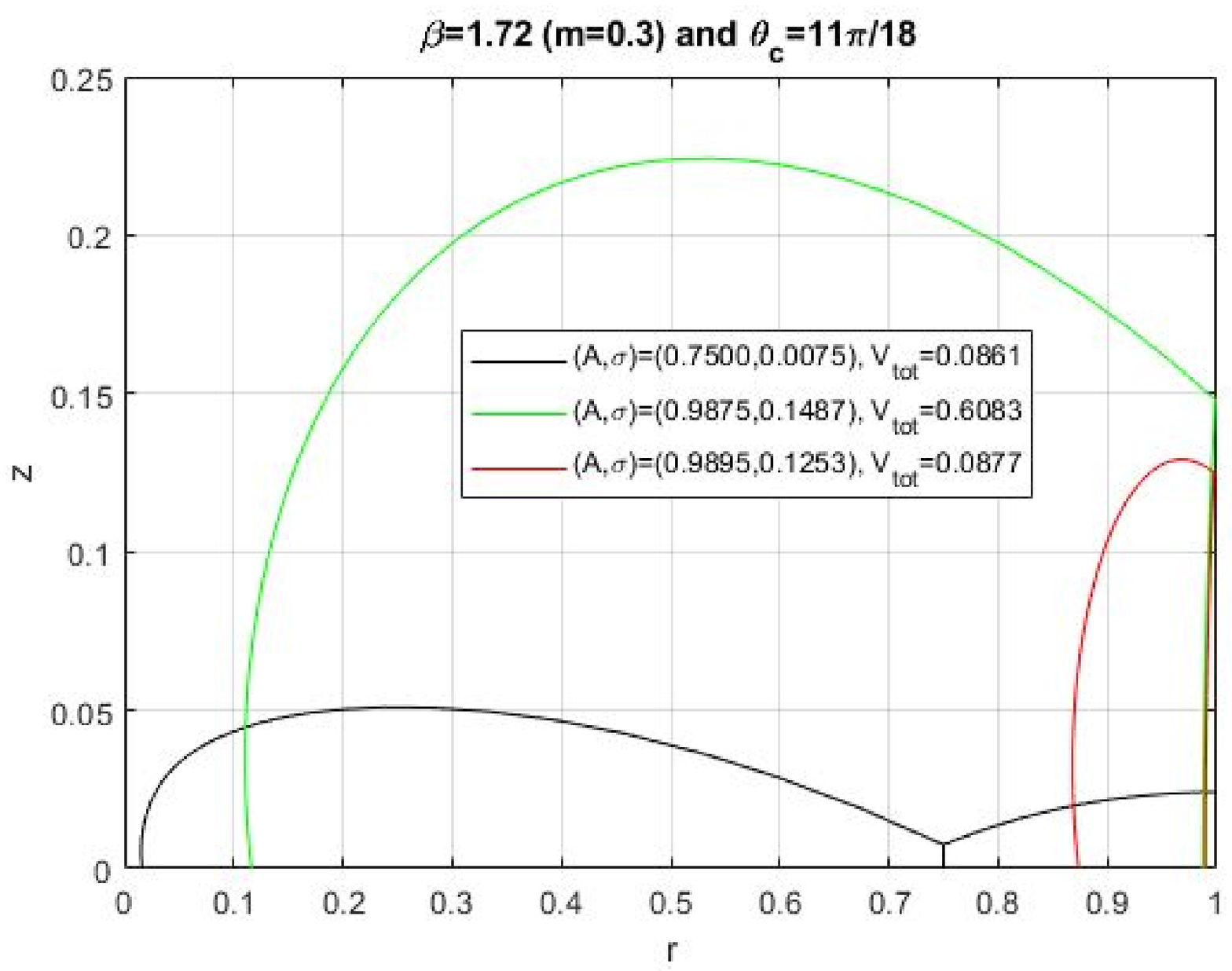}
	\end{minipage}
	\caption{Here $\beta=1.72$ ($m=0.3$) and $\theta_c=\frac{11\pi}{18}$, which are physically realistic values for metallic films on ceramic substrates. a) The $A-\sigma$ curve corresponding to the steady states.
			b) A combined plot of the calculated profiles where the black, green and red profile colors indicate that their respective $(A, \sigma)$ values are in accordance with the color coding used in a), and $\rm{V_{tot}}$ is the total volume $\mathbb{V}$ enclosed by the corresponding steady state solution.}\label{VolumeSigmaA}
\end{figure}

\newpage
\bigskip

\color{black}\section{Asymptotic analysis}\label{asymptotic analysis}

In looking at Fig. \ref{SigmaAcombined},  it can be seen that
for all values of $(\beta, \theta_c)$   considered, the set of possible steady states can be parametrized  as  curves $(A, \sigma)$ in $\Omega_0$, which
depend on $(\beta, \theta_c).$  More specifically, these parametric curves all appear to be describable as graphs of functions $\sigma=\sigma(A; \beta, \theta_c)$ and to satisfy
$\lim_{A \rightarrow 1^-} \sigma(A; \beta, \theta_c)=0$. In this section we provide an analytical proof of this result for the case of $\theta_c=\pi$ in a neighborhood of $(A,\sigma)=(1,0)$;  namely,
 \begin{thm} \label{implicit}
	For any  $\beta \in (\frac{\pi}{2}, \pi) $ and for $\theta_c=\pi$,  the set of steady state solutions to (\ref{meridian1})--(\ref{l4a}) can be parametrized in $\Omega_0$ by a continuous function $\sigma=\sigma(A; \beta, \theta_c=\pi)$ in the
	neighborhood of the point $(A,\sigma)=(1,0),$ where $(A, \sigma(A; \beta,\theta_c=\pi), \beta)$  satisfies (\ref{constraints}).  Moreover,  $\lim_{A \rightarrow 1^-} \sigma(A;\beta,\theta_c=\pi)=0$.
\end{thm}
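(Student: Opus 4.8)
\noindent\textit{Proof strategy.} The plan is to realize the steady states with $\theta_c=\pi$ near $(A,\sigma)=(1,0)$ as the zero set of a single scalar function and to apply the implicit function theorem. Fix $\beta\in(\pi/2,\pi)$. By Corollary \ref{z1piSection6}, together with Theorem \ref{onetwo} and the fact that the contact angle attached to a point of $\Lambda$ is the unique $\theta_c\in(0,\pi]$ with $z_1(\theta_c)=0$ (Lemma \ref{z1piSufficient}), a pair $(A,\sigma)\in\Omega_0$ with $(A,\sigma,\beta)\in\Lambda$ corresponds to a steady state with $\theta_c=\pi$ precisely when $z_1(\pi)=0$; conversely, by Theorem \ref{onethree}, any steady state with $\theta_c=\pi$ has parameters $(A,\sigma)\in\Omega_0$ obeying (\ref{parameters}) and, in particular, $z_1(\theta_c)=z_1(\pi)=0$. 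Thus I would study the function $G(A,\sigma):=z_1(\pi)$ built from (\ref{solutions}), (\ref{parameters}), and solve $G=0$ near $(1,0)$.

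The crux is that $G$, although it looks singular at $(1,0)$ --- where $\lambda\to-\infty$, $a_\ell\to1$ and the inner nodoid $\Gamma_1$ degenerates --- in fact extends real-analytically to a full open neighborhood of $(1,0)$ in $\mathbb{R}^2$. The reason is that $z_1(\pi)$ depends on $\lambda$ only through $b:=-1/(2\lambda)\to0$: using $4\lambda^2a_\ell^2=a_\ell^2/b^2$ one rewrites
\begin{equation}\nonumber
z_1(\pi)=\bar{z}+\int_{\bar{\theta}_1}^{\pi}\left(\frac{b^2\sin^2(x)}{\sqrt{b^2\sin^2(x)+a_\ell^2}}-b\sin(x)\right)\dd x ,
\end{equation}
and, by (\ref{parameters}), the quantities $b$, $a_\ell$, $\bar{z}$ are real-analytic in $(A,\sigma)$ near $(1,0)$, with $b(1,0)=0$, $a_\ell(1,0)=1$, $\bar{z}(1,0)=0$; the one delicate point, $\bar{\theta}_1=\bar{\theta}_3-\beta$, is handled by noting $\bar{\theta}_3=\pi/2-\arctan(\sigma/A)$, which is analytic near $(1,0)$, unlike the form $\bar{\theta}_3=\arcsin(A/\bar{r})$. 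Hence $G$ is analytic near $(1,0)$ with $G(1,0)=0$, and since every term other than $\bar{z}$ carries a factor $b$ or $b^2$, while $\partial_\sigma b(1,0)=0$, $\partial_A b(1,0)=1/\cos\beta$ and $\bar{\theta}_1(1,0)=\pi/2-\beta$, one computes $\partial_\sigma G(1,0)=\partial_\sigma\bar{z}(1,0)=1\neq0$ and $\partial_A G(1,0)=-\big(1+\sin\beta\big)\,\partial_A b(1,0)=-(1+\sin\beta)/\cos\beta=(1+\sin\beta)/|\cos\beta|>0$. The implicit function theorem then yields $A_0<1$ and a real-analytic $\sigma=\sigma(A;\beta,\pi)$ on $(A_0,1]$ with $\sigma(1;\beta,\pi)=0$, $G(A,\sigma(A;\beta,\pi))=0$, and $\sigma'(1;\beta,\pi)=-\partial_AG(1,0)/\partial_\sigma G(1,0)=-(1+\sin\beta)/|\cos\beta|<0$, so that $\sigma(A;\beta,\pi)>0$ for $A\in(A_0,1)$ after shrinking $A_0$, and $\lim_{A\to1^-}\sigma(A;\beta,\pi)=0$.

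It then remains to place the curve in $\Omega_0$ and to check that $(A,\sigma(A;\beta,\pi),\beta)$ satisfies all of (\ref{constraints}) for $A$ near $1^-$, so that Theorem \ref{onetwo} delivers genuine steady states. Since $\sigma(A;\beta,\pi)=O(1-A)=o(\sqrt{1-A^2})$, the point $(A,\sigma(A;\beta,\pi))$ lies in $\Omega_0$; writing $t=1-A$ and $\sigma\sim c\,t$ with $c=(1+\sin\beta)/|\cos\beta|$ and $1-A^2\sim2t$, one sees that the third condition, $z_1(\pi)\le0$, holds with equality by construction, the first, $\tan(\beta)>\frac{A}{\sigma}(1-2A^2-2\sigma^2)$, holds because its right side $\sim-1/(ct)\to-\infty$ while $\tan(\beta)$ is finite, the fourth, $\tan(\beta)<\frac{A(1-A^2-2\sigma^2)}{\sigma(1-A^2)}$, holds because its right side $\sim1/(ct)\to+\infty$, and the second is vacuous for small $t$ because its hypothesis $\cos(\beta)<-A/\sqrt{\sigma^2+A^2}$ fails once $-A/\sqrt{\sigma^2+A^2}$ is closer to $-1$ than $\cos(\beta)\in(-1,0)$. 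Hence $(A,\sigma(A;\beta,\pi),\beta)\in\Lambda$, so Theorem \ref{onetwo} gives a steady state, and since $z_1(\pi)=0$ on the curve, Corollary \ref{z1piSection6} makes its contact angle $\theta_c=\pi$. Finally, the uniqueness in the implicit function theorem, together with the converse direction (any steady state with $\theta_c=\pi$ has $G=0$ at its parameters, by Theorem \ref{onethree}), shows that the graph of $\sigma(\cdot;\beta,\pi)$ is exactly the set of steady-state parameters with $\theta_c=\pi$ in a neighborhood of $(1,0)$, which is the assertion of the theorem.

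I expect the first of these steps to be the main obstacle: recognising that the degeneracy at $(A,\sigma)=(1,0)$ --- the inner nodoid collapsing as its mean curvature $\lambda$ blows up --- is only apparent, and verifying that after the substitution $b=-1/(2\lambda)$ and the $\arctan$-representation of $\bar{\theta}_3$ the map $(A,\sigma)\mapsto z_1(\pi)$ is smooth (indeed analytic) up to and across $(1,0)$, with $\partial_\sigma z_1(\pi)\neq0$ there. Once this is established, the remainder is the implicit function theorem and the routine asymptotics of the constraints (\ref{constraints}).
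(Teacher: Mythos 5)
Your proposal is correct, and while the overall skeleton coincides with the paper's (reduce the $\theta_c=\pi$ steady states to the scalar equation $z_1(\pi)=0$, solve it by the implicit function theorem, then check the remaining constraints in (\ref{constraints}) asymptotically), the way you verify the implicit-function-theorem hypotheses is genuinely different and noticeably cleaner. The paper never touches the corner $(A,\sigma)=(1,0)$: it works inside the punctured region $\Omega_{\tilde\epsilon}$, rewrites $z_1(\pi)$ (rescaled as $G=-2\lambda k\,z_1(\pi)$) in terms of Legendre elliptic integrals, proves existence of a zero by showing $G<0$ as $\sigma\downarrow 0$ and $G>0$ as $\sigma\uparrow\sqrt{1-A^2}$, and proves uniqueness by a long asymptotic computation yielding $G_\sigma=1+O(\tilde\epsilon^{1/2})$ and $G_A=\frac{1+\sin\beta}{-\cos\beta}+O(\tilde\epsilon^{1/2})$. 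You instead observe that the substitution $b=-1/(2\lambda)$ removes the apparent degeneracy, so that $z_1(\pi)$ extends analytically across $(1,0)$, and you apply the implicit function theorem at that point itself; your exact values $\partial_\sigma G(1,0)=1$ and $\partial_A G(1,0)=(1+\sin\beta)/|\cos\beta|$ agree with the paper's leading-order asymptotics and reproduce its expansion $\sigma=\frac{1+\sin\beta}{-\cos\beta}(1-A)+O((1-A)^2)$. What your route buys is the avoidance of the elliptic-integral machinery and of the separate intermediate-value/monotonicity argument; what the paper's route buys is uniform control of $G_\sigma$ throughout $\Omega_{\tilde\epsilon}$ rather than only in the rectangle supplied by the implicit function theorem (which, as you implicitly use, does cover $\Omega_0$ near $(1,0)$ since $\sqrt{1-A^2}\to 0$ there). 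Your handling of the converse inclusion via Theorem \ref{onethree} also sidesteps the paper's geometric remark that (C8) implies (C8$'$) when $\theta_c=\pi$. One small point you should make explicit: $\bar z=A\acosh(\sqrt{\sigma^2+A^2}/A)$ needs the same desingularization as $\bar\theta_3$ (e.g.\ $\bar z=A\,\mathrm{arcsinh}(\sigma/A)$), since $\acosh$ is not smooth at $1$; you assert its analyticity but only justify the analogous claim for $\bar\theta_3$.
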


\begin{proof}
	Let us now focus on  a small neighborhood of $(A,\sigma)=(1,0)$ within $\Omega_0$. More specifically,
		given $\beta\in \left(\frac{\pi}{2},\pi\right)$ and $0<\tilde{\epsilon}\ll 1$, let us define the following neighborhood of $(A,\sigma)=(1,0)$,
		\begin{equation}
		\Omega_{\tilde{\epsilon}}:=\{(A,\sigma) \,| \,  1-\tilde{\epsilon}<A< 1,\, 0<\sigma<\sqrt{1-A^2} \},\label{omega}
		\end{equation}
		where by definition, $\Omega_{\tilde{\epsilon}} \subset \Omega_0$.
According to Theorem \ref{oneone} if $(A,\sigma) \in \Omega_{\tilde{\epsilon}}$, and if the constraints in (\ref{constraints}) are satisfied for $(A,\sigma;\beta,\theta_c=\pi)$, with the third inequality is satisfied as an equality, then $(A,\sigma;\beta,\theta_c=\pi)$ uniquely prescribe a steady state solution to (\ref{meridian1})--(\ref{l4a}). Moreover, by considering the specific geometry of the steady state solutions in the special case of $\theta_c=\pi$, the necessary and sufficient condition $\rm{(C8)}$ is readily seen to imply $\rm{(C8')}$. In other words, the fourth constraint in (\ref{constraints}), which is a sufficient condition for satisfying (\ref{constraints5}), is also necessary. Thus in $\Omega_{\tilde{\epsilon}}$, the constraints in (\ref{constraints}) for $(A,\sigma;\beta,\theta_c=\pi)$ with the third inequality satisfied as an equality are both necessary and sufficient. Accordingly, given $\beta\in\left(\frac{\pi}{2},\pi\right)$, then  $(A,\sigma;\beta,\theta_c=\pi)$ prescribe a steady state solution for $(A,\sigma) \in \Omega_{\tilde{\epsilon}}$ if and only if the following necessary and sufficient conditions are satisfied
	\begin{eqnarray}
	{\rm{(I)}}\ \tan(\beta)>\frac{A}{\sigma}(1-2A^2-2\sigma^2),&\quad\quad&{\rm{(II)}}\ \cos(\beta)<-\frac{1}{2A} \quad\hbox{if}\quad \cos(\beta)<-\frac{A}{\sqrt{\sigma^2+A^2}},\nonumber\\
	{\rm{(III)}}\  z_1(\pi)=0,&\quad\quad&{\rm{(IV)}}\ \tan(\beta)<\frac{A(1-A^2-2\sigma^2)}{\sigma(1-A^2)}.\nonumber
	\end{eqnarray}

\begin{claim}\label{claim1} If  $\tilde{\epsilon}>0$ is sufficiently small, then the conditions $\rm{(I), (II)}$ are satisfied within $\Omega_{\tilde{\epsilon}}$.
\end{claim}
\begin{proof}[Proof of Claim \ref{claim1}]
Clearly condition (II) is satisfied if
\begin{equation}
2A^2>\sqrt{\sigma^2+A^2}.\label{22}
\end{equation}
Since $\sqrt{\sigma^2+A^2}<1$,
we obtain that if $2(1-\tilde{\epsilon})^2 >1$, then (\ref{22}) is satisfied within $\Omega_{\tilde{\epsilon}}$. Therefore, we require
\begin{equation}
\tilde{\epsilon}<1-{1}/{\sqrt{2}}.\label{222}
\end{equation}
Furthermore, since  $0<\sigma<\sqrt{2\tilde{\epsilon}} \ll 1$ for $(A,\sigma)\in\Omega_{\tilde{\epsilon}}$, we obtain
$$\frac{A}{\sigma}(2A^2+2\sigma^2-1)>\frac{1-\tilde{\epsilon}}{\sqrt{2\tilde{\epsilon}}}(2(1-\tilde{\epsilon})^2-1)>\frac{1-8\tilde{\epsilon}}{2\sqrt{\tilde{\epsilon}}}.$$
Hence condition (I) is satisfied if $0<\tilde{\epsilon} < 1- 1/\sqrt{2}$ is taken sufficiently small so that
\begin{equation} 
-\frac{1-8\tilde{\epsilon}}{2\sqrt{\tilde{\epsilon}}} < \tan(\beta).\label{11}
\end{equation}
\end{proof}
\bigskip
Note that the upper bound on $\tilde{\epsilon}$  in (\ref{11}) depends on our choice of $\beta\in (\pi/2,\pi)$, and in particular $\tilde{\epsilon}<\frac{1}{8}.$
  
\begin{claim}\label{claim2} If  $\tilde{\epsilon}>0$ is sufficiently small, then condition $\rm{(III)}$ holds within $\Omega_{\tilde{\epsilon}}$ if and only if $(A,\sigma)$ belong to a locus of points describable as follows
\begin{equation}\label{SolutionCurve}
\sigma(A;\beta,\theta_c=\pi)=\Bigl[\frac{\sin(\beta)+1}{-\cos(\beta)}\Bigr](1-A) + O( (1-A)^2).
\end{equation}

\end{claim}
\begin{proof}[Proof of Claim \ref{claim2}]
Let us now set
\begin{equation}\label{defGG}
G(A,\sigma;\beta,\theta_c=\pi):=-2\lambda kz_1(\pi),
\end{equation}
where $k$, defined in (\ref{kk'}) in the Appendix, is positive. From the definition of $\lambda$ in (\ref{parameters}) and condition (I), it follows that $\lambda k< 0$ for $(A,\sigma)\in \Omega_{\tilde{\epsilon}}$. Thus, condition (III) is satisfied if and only if $G$ vanishes. 

We proceed now to analyze the locus where $G(A,\sigma;\beta,\theta_c=\pi)=0$
for $(A,\sigma)\in\Omega_{\tilde{\epsilon}}$ with $\tilde{\epsilon}>0$  sufficiently small. From (\ref{defGG}), and using the expression derived for $z_1(\pi)$ given in (\ref{Gpi}) in the Appendix, we obtain
	\begin{equation}
	G(A,\sigma;\beta,\theta_c=\pi)=k'^2\left(F\left(\bar{\theta}_1+\frac{\pi}{2},k\right)-3K(k)\right)-\left(E\left(\bar{\theta}_1+\frac{\pi}{2},k\right)-3E(k)\right)-2\lambda k \bar{z}-k\cos(\bar{\theta}_1)-k.\label{implicitG}
	\end{equation}
	Using (\ref{parameters}), (\ref{incomplete}), (\ref{complete}), (\ref{k}), (\ref{kk}) we notice that for $(A,\sigma)\in \Omega_{\tilde{\epsilon}}$ the terms:$$\bar{z},\ \bar{\theta}_1,\ \lambda,\ k,\ k',\ F\left(\bar{\theta}_1+\frac{\pi}{2},k\right),\ E\left(\bar{\theta}_1+\frac{\pi}{2},k\right),\ K(k),\ E(k)$$
	are continuously differentiable functions of $A,\ \sigma$ and hence $G$ is also continuously differentiable function of $A,\ \sigma$ in this neighborhood.\\

\bigskip
We first evaluate $G$ along the lower limit of $\sigma$ within $\Omega_{\tilde{\epsilon}}$, namely when $\sigma \downarrow 0$ for $(A,\sigma)\in \Omega_{\tilde{\epsilon}}$.
It follows from (\ref{parameters}),(\ref{k}) that in this limit,
$$\bar{\theta}_1+ \frac{\pi}{2}=\pi-\beta,\quad \bar{z}=0,\quad k=\frac{1-A^2}{\sqrt{(1-A^2)^2 - 8A^2 \cos^2(\beta)(1-A^2)+ 4 A^2 \cos^2(\beta)}},$$
$$\lambda k=\frac{A \cos(\beta)}{\sqrt{(1-A^2)^2 - 8A^2 \cos^2(\beta)(1-A^2)+ 4 A^2 \cos^2(\beta)}}.$$
Since $A= 1 + O(\tilde{\epsilon})$, $0<1-A^2<2\tilde{\epsilon} + O(\tilde{\epsilon}^2),$ and by the definition of $k'$ in (\ref{kk'}) we obtain 
$$0< k < \frac{\tilde{\epsilon}}{-\cos(\beta)}+ O({\tilde{\epsilon}}^2), \quad \lambda k = -\frac{1}{2}+ O(\tilde{\epsilon}),\quad  k'^2=1 +O({\tilde{\epsilon}}^2).$$
Substituting the above estimates into the expression for $G(A,\sigma;\beta,\theta_c=\pi)$ given in (\ref{implicitG}), and using  (\ref{incomplete2})--(\ref{complete2}), it is easy to check that
$G=  k(-1- \sin(\beta)) + O(k^2)<0$ when $\tilde{\epsilon}$ is sufficiently small, along the lower limit of $\sigma$ within $\Omega_{\tilde{\epsilon}}$.

\bigskip
	We now continue by evaluating $G$ along the upper limit of $\sigma$ within $\Omega_{\tilde{\epsilon}}$, namely when $\sigma\uparrow \sqrt{1-A^2}$ for $(A,\sigma)\in \Omega_{\tilde{\epsilon}}$. From (\ref{parameters}),(\ref{k}),(\ref{kk}) we obtain in this limit
	\begin{align}\label{Glow1}
	\begin{array}{l}
		\bar{\theta}_1+\frac{\pi}{2} \rightarrow \arcsin(A)-\beta+\frac{\pi}{2},\quad \bar{z} \rightarrow A\,\acosh\left(\frac{1}{A}\right), \quad\\[1.5ex]
 k \rightarrow 0, \quad
	k'^2 \rightarrow 1, \quad
	\lambda k \rightarrow -\frac{1}{2},\quad
		k\cos(\bar{\theta}_1) \rightarrow 0.
	\end{array}
	\end{align}
	From (\ref{incomplete2}), (\ref{complete2}) we obtain that
	\begin{equation}\label{Glow2}
	\underset{k\rightarrow 0}{\lim}K(k)=	\underset{k\rightarrow 0}{\lim}E(k)=\frac{\pi}{2},\quad\underset{k\rightarrow 0}{\lim}F(\bar{\theta}_1+\frac{\pi}{2},k)=	\underset{k\rightarrow 0}{\lim}E(\bar{\theta}_1+\frac{\pi}{2},k)=\bar{\theta}_1+\frac{\pi}{2}.
	\end{equation}
	Hence by (\ref{implicitG}), (\ref{Glow1}), (\ref{Glow2}),
	\begin{equation}\label{positiveG}
	\underset{\sigma\rightarrow \sqrt{1-A^2}}{\lim}G=A\,\acosh\left(\frac{1}{A}\right)>0.
	\end{equation}

From the discussion above it follows that within $\Omega_{\tilde{\epsilon}}$, for any $A$ there exists $\sigma$ such that
$G(A,\sigma;\beta,\theta_c=\pi)=0$. However,
in order to show that such $\sigma$ is unique, namely $\sigma=\sigma(A;\beta,\theta_c=\pi)$ is describable as a function of $A$,  
we must refine our argument.  If we can show that
\begin{equation} \label{Gsigma}
G_{\sigma}(A,\sigma;\beta,\theta_c=\pi)>0,\quad (A,\sigma)\in \Omega_{\tilde{\epsilon}},
\end{equation}
 then uniqueness is guaranteed. Since, as noted above, $G$ is a continuously differentiable
 function of $A,\ \sigma$ within $\Omega_{\tilde{\epsilon}}$ and there exists $(A,\sigma)\in \Omega_{\tilde{\epsilon}}$ such that
 $G(A,\sigma;\beta,\theta_c=\pi)=0$, it follows that if (\ref{Gsigma}) is satisfied, then implicit function theorem can be used in conjunction with (\ref{Gsigma}) to imply the existence of a unique continuously differentiable function  $\sigma(A;\beta,\theta_c=\pi)$ for $(A,\sigma)\in\Omega_{\tilde{\epsilon}}$ such that  $G(A,\sigma(A;\beta,\theta_c=\pi);\beta,\theta_c=\pi)=0$, which is bounded from above and below by the curves $\sigma=\sqrt{1-A^2}$ and $\sigma=0$. These results allow us to demonstrate that
\begin{equation} \label{sigma1-A}
\sigma = \frac{G_A}{G_{\sigma}}(1-A)+ O( (1-A)^2).
\end{equation}

  \bigskip

  From (\ref{implicitG}) we obtain
  that
  $$G_{\sigma}= [(1-k^2)F_{\phi}(\phi,k) - E_{\phi}(\phi,k)] \phi_{\sigma} + [-2k F(\phi,k) + (1-k^2)F_k(\phi,k) -E_k(\phi,k)]k_{\sigma}\\[1.5ex]$$
$$ -3[-2k K(k) + (1-k^2) K'(k) - E'(k)] k_{\sigma}  - [2 \lambda k \bar{z} + k \cos(\bar{\theta}_1) + k]_{\sigma},$$
 where $\phi = \bar{\theta}_1 +\frac{\pi}{2},$  and similarly that
$$G_{A}= [(1-k^2)F_{\phi}(\phi,k) - E_{\phi}(\phi,k)] \phi_{A} + [-2k F(\phi,k) + (1-k^2)F_k(\phi,k) -E_k(\phi,k)]k_{A}\\[1.5ex]$$
$$ -3[-2k K(k) + (1-k^2) K'(k) - E'(k)] k_{A}  - [2 \lambda k \bar{z} + k \cos(\bar{\theta}_1) + k]_{A}.$$

  From \cite[\S19.2, \S19.4]{Functions},
  \begin{equation} \label{completek}
  K'(k)=\frac{1}{k (k')^2}[E(k) - (k')^2 K(k)], \quad E'(k)=\frac{1}{k}[E(k)-K(k)],
  \end{equation}
  \begin{equation} \label{incompletephi}
  F_\phi(\phi,k)=\frac{1}{\sqrt{1-k^2 \sin^2 \phi}}, \quad E_\phi(\phi,k)=\sqrt{1-k^2 \sin^2 \phi},
  \end{equation}
  \begin{equation} \label{incompletek}
  F_k(\phi,k)=\frac{1}{k (k')^2}[E(\phi,k) - (k')^2 F(\phi,k)]-\frac{k \sin( \phi)\cos(\phi)}{(k')^2\sqrt{1-k^2 \sin^2(\phi)}}, \quad E_k(\phi,k)=\frac{1}{k}[E(\phi,k)-F(\phi,k)].
  \end{equation}

Using (\ref{completek})--(\ref{incompletek}) and since $\phi = \bar{\theta}_1 +\frac{\pi}{2}$, we obtain that
$$G_{\sigma}= - \frac{k^2 \sin^2(\bar{\theta}_1)}{\sqrt{1 - k^2 \cos^2(\bar{\theta}_1)}} (\bar{\theta}_1)_{\sigma} -k \Biggl[F(\bar{\theta}_1 +\frac{\pi}{2},k) - \frac{ \sin(\bar{\theta}_1) \cos(\bar{\theta}_1)}{\sqrt{1-k^2 \cos^2(\bar{\theta}_1)}}-3K(k)\Biggr]  k_{\sigma}
 - [2 \lambda k \bar{z} +  k \cos(\bar{\theta}_1) + k]_{\sigma},$$
    $$G_{A}= - \frac{k^2 \sin^2(\bar{\theta}_1)}{\sqrt{1 - k^2 \cos^2(\bar{\theta}_1)}} (\bar{\theta}_1)_{A} -k \Biggl[F(\bar{\theta}_1 +\frac{\pi}{2},k) - \frac{ \sin(\bar{\theta}_1) \cos(\bar{\theta}_1)}{\sqrt{1-k^2 \cos^2(\bar{\theta}_1)}}-3K(k)\Biggr]  k_{A}  - [2 \lambda k \bar{z} +  k \cos(\bar{\theta}_1) + k]_{A}.$$

\bigskip
We now wish to show that $k$ is small and to estimate its size. From (\ref{parameters}),(\ref{kk'}) we obtain that 
\begin{equation} \label{Psi}
\frac{1}{k^2}=\Psi(A,\sigma;\beta)=1 - 8 A \lambda\cos(\beta) + 4 \lambda^2.
\end{equation}
Hence
\begin{equation} \label{ksigma}
\Bigl(\frac{1}{k^2}\Bigr)_\sigma = \Psi_\lambda \lambda_{\sigma} = 8 (\lambda-A \cos(\beta)) \lambda_\sigma.
\end{equation}
Since $\tilde{\epsilon}$ is assumed to be sufficiently small so that condition (I) is satisfied within $\Omega_{\tilde{\epsilon}}$, we obtain
\begin{equation} \label{k2}
 \Psi_\lambda  = 8\left(\frac{(A^2+\sigma^2)A\cos(\beta)+\sigma\sin(\beta)}{1-A^2-\sigma^2}\right)<8A\cos(\beta)<0.
\end{equation}
Note further that
 \begin{equation} \label{k_sigma}
 \lambda_\sigma=\frac{\cos(\beta)( 2 \sigma A + \tan(\beta)(1 - A^2 -\sigma^2 + 2 \sigma^2))}{(1 -A^2 -\sigma^2)^2}.
 \end{equation}
 Thus, by evaluating $\lambda_\sigma$ in the limits $\sigma \downarrow 0,$ $\sigma \uparrow \sqrt{1-A^2}$ we obtain 
 $$\lim_{\sigma \downarrow 0} \lambda_\sigma=\frac{\sin(\beta)}{(1-A^2)} >0, \quad \lim_{\sigma \uparrow \sqrt{1-A^2}} \lambda_\sigma =\lim_{\sigma \uparrow \sqrt{1-A^2}}\Biggl[ \frac{2 \sigma (\sigma \sin(\beta)+A \cos(\beta))}{(1-A^2 -\sigma^2)^2} +\frac{\sin(\beta)}{(1-A^2 -\sigma^2)}\Biggr]<0,$$
which implies that $\lambda_\sigma$ changes sign as $\sigma$ varies for fixed $A$.

\smallskip
Note that it also follows from (\ref{Psi}) that
\begin{equation} \label{kA}
\Bigl(\frac{1}{k^2}\Bigr)_A = \Psi_\lambda \lambda_{A} - 8 \lambda\cos(\beta) = 8 (\lambda-A \cos(\beta)) \lambda_A  - 8\lambda \cos(\beta),
\end{equation}
where
\begin{equation}\label{XAl}
\lambda_A = \frac{\cos(\beta)((1+ A^2 -\sigma^2) + 2 \sigma A \tan(\beta))}{(1-A^2 -\sigma^2)^2}<\frac{ \cos(\beta)((1+ A^2 -\sigma^2 ) + 2  A^2 (1-2A^2 -2 \sigma^2))}{(1-A^2 -\sigma^2)^2}
= \frac{\cos(\beta)(1  + 4 A^2)}{1-A^2 -\sigma^2}<0.
\end{equation}
Combining (\ref{k2}),(\ref{kA}),(\ref{XAl}) yields
that
$$\Bigl(\frac{1}{k^2}\Bigr)_A> \frac{ 8A\cos^2(\beta)(1  + 4 A^2)}{1-A^2 -\sigma^2}-\frac{8(A \cos^2(\beta) +\sigma \cos(\beta)\sin(\beta))}{1-A^2 -\sigma^2}=
\frac{ 32  A^3 \cos^2(\beta)-8\sigma \cos(\beta)\sin(\beta)}{1-A^2 -\sigma^2}>0.$$
 So $0<k(A,\sigma;\beta)<k(A_0,\sigma;\beta),$ where $A_0:=1-\tilde{\epsilon}$.
 Thus, by (\ref{Psi}),(\ref{k2}) we obtain 
 \begin{equation} \label{kboundm}
  0<k(A,\sigma;\beta) < k(A_0,\sigma;\beta)\le k(A_0,\sigma_{\max};\beta), \quad \sigma_{\max}: =\argmax_{0 < \sigma < \sqrt{1-A_0^2}}\lambda(A_0,\sigma;\beta).
  \end{equation}
  Let us define 
  \begin{equation}\nonumber \bar{\lambda}_{\max}:=\frac{A_0 \cos(\beta) +\sqrt{1-A_0^2}\sin(\beta)}{1-A_0^2},
  \end{equation} 
  which, clearly, satisfies $\bar{\lambda}_{\max}  > \lambda(A_0,\sigma_{\max};\beta).$ Thus, by defining $\tilde{k}(A,\lambda(A,\sigma;\beta)):=k(A,\sigma;\beta),$ and following the evaluations given in \cite[\S1.3]{MScKG} we obtain that by requiring 
  \begin{equation} \label{Xbound}
  \tan(\beta)-1>-\frac{(1-8\tilde{\epsilon})}{2\sqrt{\tilde{\epsilon}}},\ \tilde{\epsilon} < 1+\frac{1}{2\cos(\beta)}\quad\text{if}\quad \beta\in\left(\frac{2\pi}{3},\pi\right),
  \end{equation}
  we can extend the domain of $\tilde{k}(A_0;\lambda)$ to all $\lambda\le \bar{\lambda}_{\max},$  and in this domain $\tilde{k}(A_0,\lambda)>0,$ $\tilde{k}_\lambda(A_0,\lambda)>0,$ thus yielding the following upper bound:
  \begin{equation} \label{kbound}
  0<k(A,\sigma;\beta)<\tilde{k}(A_0,\bar{\lambda}_{\max}) = \frac{\tilde{\epsilon}}{-\cos(\beta)}+O(\tilde{\epsilon}^{3/2}).
  \end{equation}

\bigskip\par\noindent
Thus, by making use of the expansions (\ref{incomplete2})--(\ref{complete2}) we obtain the following expansions for $G_\sigma,\,G_A:$
$$G_{\sigma}= \frac{- k^2 \sin^2(\bar{\theta}_1)}{\sqrt{1 - k^2 \cos^2(\bar{\theta}_1)}} (\bar{\theta}_1)_{\sigma} + \Bigl[ [-\bar{\theta}_1 + \pi]k +O(k^3) + \frac{k \sin(\bar{\theta}_1) \cos(\bar{\theta}_1)}{\sqrt{1-k^2 \cos^2(\bar{\theta}_1)}}\Bigr]k_{\sigma}
  - [2 \lambda k \bar{z} + k \cos(\bar{\theta}_1) + k]_{\sigma},$$
  and similarly that
$$G_{A}=  \frac{-k^2 \sin^2(\bar{\theta}_1)}{\sqrt{1 - k^2 \cos^2(\bar{\theta}_1)}} (\bar{\theta}_1)_{A} + \Bigl[ [-\bar{\theta}_1 + \pi]k +O(k^3) + \frac{k \sin(\bar{\theta}_1) \cos(\bar{\theta}_1)}{\sqrt{1-k^2 \cos^2(\bar{\theta}_1)}}\Bigr]  k_{A}
  - [2 \lambda k \bar{z} + k \cos(\bar{\theta}_1) + k]_{A}.$$
   Simplifying,
  $$G_{\sigma}=[ k \sin(\bar{\theta}_1) +O(k^2) ] (\bar{\theta}_1)_{\sigma} + [ -2\lambda\bar{z} -[\cos(\bar{\theta}_1)+1]+O(k)]k_{\sigma}
  - 2k [\lambda \bar{z}]_{\sigma},$$
  and similarly
  $$G_{A}=  [ k \sin(\bar{\theta}_1)  +O(k^2) ] (\bar{\theta}_1)_{A} + [-2\lambda\bar{z} -[\cos(\bar{\theta}_1)+1]+ O(k)]  k_{A}
  -2k [\lambda\bar{z}]_{A}.$$

\bigskip\par
To evaluate $G_A$ and $G_\sigma$ to leading order, we proceed to evaluate the various contributions.

\bigskip\par
Recalling that $\bar{\theta}_1=\bar{\theta}_3-\beta$, where $\bar{\theta}_3=\arcsin\Bigl(\frac{A}{\sqrt{\sigma^2 + A^2}}\Bigr)$, with $\bar{\theta}_3\in (0, \frac{\pi}{2}),$
we find that
$$1+O(\tilde{\epsilon}) = \frac{A}{\sqrt{\sigma^2 + A^2}}= \sin(\bar{\theta}_3)=\sin( \frac{\pi}{2} +( \bar{\theta}_3-\frac{\pi}{2}))= 1 -\frac{1}{2} ( \bar{\theta}_3-\frac{\pi}{2})^2 + O( ( \bar{\theta}_3-\frac{\pi}{2})^4),$$
and hence that
$$\bar{\theta}_3=\frac{\pi}{2} +O(\tilde{\epsilon}^{1/2}), \quad \bar{\theta}_1=\frac{\pi}{2}-\beta +O(\tilde{\epsilon}^{1/2}),$$
$$\sin(\bar{\theta}_1)=\frac{A  \cos(\beta)-  \sigma \sin(\beta)}{\sqrt{\sigma^2+A^2}} =\cos(\beta) +O(\tilde{\epsilon}^{1/2}),\quad
\cos(\bar{\theta}_1)=\frac{\sigma  \cos(\beta) + A \sin(\beta)}{\sqrt{\sigma^2+A^2}}=\sin(\beta) +O(\tilde{\epsilon}^{1/2}),$$
and that
$$(\bar{\theta}_1)_\sigma= \frac{\sqrt{\sigma^2 + A^2}}{\sigma} \cdot \frac{-\sigma A}{(\sigma^2 + A^2)^{3/2}}= \frac{-A}{\sigma^2 + A^2}=-1 +O(\tilde{\epsilon}),\quad
 (\bar{\theta}_1)_A= \frac{\sqrt{\sigma^2 + A^2}}{\sigma} \cdot \frac{\sigma^2}{(\sigma^2 + A^2)^{3/2}}= \frac{\sigma}{\sigma^2 + A^2}=O(\tilde{\epsilon}^{1/2}).$$

\bigskip\bigskip\par\noindent Using the above,
%
$$G_{\sigma}=-k \cos(\beta) +O({\tilde{\epsilon}}^{3/2})  + [ -2\lambda\bar{z} - (\cos(\bar{\theta}_1)+1)+O(k)]k_{\sigma}
  -2k [\lambda\bar{z}]_{\sigma},$$
$$G_{A}=  O(\tilde{\epsilon}^{3/2})+ [-2\lambda\bar{z}  - (\cos(\bar{\theta}_1)+1)+O(k)]  k_{A}
   -2k [\lambda\bar{z}]_{A}.$$
Note that we may also write the above as
$$G_{\sigma}=-k \cos(\beta) +O({\tilde{\epsilon}}^{3/2})  - 2\bar{z} [ \lambda k ]_{\sigma}  -[1+ \sin(\beta) + O({\tilde{\epsilon}}^{1/2})]k_{\sigma}   - 2k \lambda \bar{z}_{\sigma},$$
$$G_{A}=  O({\tilde{\epsilon}}^{3/2}) -  2\bar{z} [\lambda k]_{A}   -[1+ \sin(\beta) + O({\tilde{\epsilon}}^{1/2})]k_{A}   - 2k \lambda\bar{z}_{A}.$$

\bigskip\par Recalling that $\bar{z}=A\,\acosh({\sqrt{\sigma^2 + A^2}}/{A})$, and that $\acosh'(x)=\frac{1}{\sqrt{x^2-1}}$ for $x > 1$, we get that
$$\bar{z}_\sigma =\frac{A}{\sqrt{\sigma^2 + A^2}}, \quad \bar{z}_A=\acosh\Bigl( \frac{\sqrt{\sigma^2 + A^2}}{A}\Bigr) - \frac{\sigma}{\sqrt{\sigma^2 + A^2}}.$$
From the above
$$0<\bar{z} =A\,\acosh\Bigl(\frac{\sqrt{\sigma^2 + A^2}}{A}\Bigr)<  A\,\acosh\Bigl(\frac{1}{A}\Bigr)<  \acosh\Bigl(\frac{1}{A_0}\Bigr)=:y, \quad A_0=1-\tilde{\epsilon} < A <1.$$
And noting that as $0<\tilde{\epsilon} \ll 1,$  $A_0=1-\tilde{\epsilon},$ and $0<y \ll 1$, 
$$1 + \frac{1}{2}y^2+O(y^4) = \cosh(y)=(A_0)^{-1}=1 + \tilde{\epsilon} + O(\tilde{\epsilon}^2) \quad\Rightarrow \quad y= \sqrt{ 2 \tilde{\epsilon}} +  O(\tilde{\epsilon}^{3/2}),$$
and hence
$$\bar{z}= O(\tilde{\epsilon}^{1/2}).$$
Similarly $1-\tilde{\epsilon}=A_0 <A< \bar{z}_\sigma < 1$ implies that $$\quad \bar{z}_\sigma=1 + O(\tilde{\epsilon}).$$ 
Also it is easy to check that $\bar{z}_{A\, \sigma}>0,\ \left(\acosh(A^{-1})- \sqrt{1-A^2}\right)_A<0$ and hence
$$0 < \bar{z}_A < \acosh(A^{-1})- \sqrt{1-A^2} < \acosh(A_0^{-1})- \sqrt{1-A_0^2} \quad\Rightarrow\quad 0<\bar{z}_A < \sqrt{ 2 \tilde{\epsilon}}- \sqrt{ 2 \tilde{\epsilon}}+ O(\tilde{\epsilon}^{3/2})=O(\tilde{\epsilon}^{3/2}).$$

\bigskip\par\noindent
We now turn to estimating $\lambda k$ and its derivatives, to allow estimation of $G_A$, $G_\sigma$ above.
Note that in terms of the notation above,
\begin{equation} \label{Xk2}
(\lambda k)^{-2} = \lambda^{-2} \Psi(A, \sigma;\beta), \quad \Psi =1 - 8 A \lambda\cos(\beta) + 4 \lambda^2.
\end{equation}
Since from (\ref{parameters})
\begin{equation}\label{lambda-1small}
0<\left|\frac{1}{\lambda}\right|=\frac{1-A^2-\sigma^2}{-A\cos(\beta)-\sigma\sin(\beta)}=O(\tilde{\epsilon}),
\end{equation}
using (\ref{Xk2}) we obtain,
\begin{equation} \label{Xk0}
\lambda k=-\frac{1}{2}\Bigl[ 1 + O(\lambda^{-1})\Bigr]=-\frac{1}{2}[1+ O(\tilde{\epsilon})].
\end{equation}
Using the estimates for $\bar{z}_\sigma$, $\bar{z}_A$, as well as (\ref{Xk0}), in $G_\sigma$, $G_A$ yields
$$G_{\sigma}=-k \cos(\beta) +O({\tilde{\epsilon}}^{3/2})  - 2\bar{z} [ \lambda k ]_{\sigma}   -[1+ \sin(\beta) + O({\tilde{\epsilon}}^{1/2})]k_{\sigma}   + 1+O(\tilde{\epsilon}),$$
$$G_{A}=    O({\tilde{\epsilon}}^{3/2})  - 2\bar{z} [\lambda k]_{A}  -[1+ \sin(\beta) + O({\tilde{\epsilon}}^{1/2})]k_{A}    + O({\tilde{\epsilon}}^{3/2}).$$

Also from (\ref{Xk2}), we get that
\begin{equation} \label{lk2X}
((\lambda k)^{-2})_\lambda = - 2 \lambda^{-3} \Psi + \lambda^{-2} \Psi_\lambda = -2\lambda^{-2}(\lambda^{-1} -4 A \cos{\beta}).
\end{equation}
Using (\ref{lk2X})
\begin{equation} \label{lk2sigma}
((\lambda k)^{-2})_\sigma = -2\lambda^{-2}(\lambda^{-1} -4 A \cos{\beta}) \lambda_\sigma,
\end{equation}
\begin{equation} \label{lk2A}
((\lambda k)^{-2})_A = -2\lambda^{-2}(\lambda^{-1} -4 A \cos{\beta}) \lambda_A + \lambda^{-2}\Psi_A=  -2\lambda^{-2}(\lambda^{-1} -4 A \cos{\beta}) \lambda_A - 8 \cos(\beta) \lambda^{-1}.
\end{equation}

\par\noindent
Estimates for $k_\sigma$, $k_A$, $(\lambda k)_\sigma$,  $(\lambda k)_A$ can be obtained by
$$k_\sigma = - \frac{1}{2} k^3 (k^{-2})_\sigma, \quad k_A = -\frac{1}{2} k^3 (k^{-2})_A,\quad 0<k<1,$$
$$(\lambda k)_\sigma=-\frac{1}{2} (\lambda k)^3 ((\lambda k)^{-2})_\sigma,\quad (\lambda k)_A=-\frac{1}{2} (\lambda k)^3 ((\lambda k)^{-2})_A,\quad \lambda k<0.$$

Thus, using (\ref{parameters}), (\ref{ksigma})--(\ref{kA}), and (\ref{Xk0}) we obtain:
$$k_\sigma =\frac{-4(\cos(\beta)+O({\tilde{\epsilon}}^{1/2}))O({\tilde{\epsilon}}^{1/2})}{(4\cos^2(\beta)+O({\tilde{\epsilon}}^{1/2}))^{3/2}}=O({\tilde{\epsilon}}^{1/2}),$$
$$k_A = \frac{-8\cos^2(\beta)+O({\tilde{\epsilon}}^{1/2})}{(4\cos^2(\beta)+O({\tilde{\epsilon}}^{1/2}))^{3/2}}+O({\tilde{\epsilon}}^{2})=\frac{1}{\cos(\beta)}(1+ O({\tilde{\epsilon}}^{1/2})).$$ 
And from (\ref{lambda-1small})--(\ref{lk2A})
$$(\lambda k)_\sigma = \left(\frac{1}{16}+O(\tilde{\epsilon})\right)\left(8A\,\cos(\beta)-\frac{2}{\lambda}\right)\left(\frac{\sin(\beta)(1 - A^2+\sigma^2) +2 \sigma A \cos(\beta)}{(A\,\cos(\beta)+\sigma\,\sin(\beta))^2}\right)=\left(\frac{1}{2}\cos(\beta)+O(\tilde{\epsilon})\right)O({\tilde{\epsilon}}^{1/2})=O({\tilde{\epsilon}}^{1/2}),$$
$$(\lambda k)_A = \left(\frac{1}{16}+O(\tilde{\epsilon})\right)\left(\left(8A\,\cos(\beta)-\frac{2}{\lambda}\right)\left(\frac{\cos(\beta)(1 + A^2-\sigma^2) +2 \sigma A \sin(\beta)}{(A\,\cos(\beta)+\sigma\,\sin(\beta))^2}\right)-8\cos(\beta)\frac{1}{\lambda}\right)=$$$$\left(\frac{1}{16}+O(\tilde{\epsilon})\right)\left(\frac{16\cos^2(\beta)}{\cos^2(\beta)}+O(\tilde{\epsilon}^{1/2})\right)=1+ O({\tilde{\epsilon}}^{1/2}).$$
\color{black} from which we may conclude
$$G_\sigma = 1 +  O({\tilde{\epsilon}}^{1/2}),\quad  G_A =\frac{\sin(\beta)+1}{-\cos(\beta)} + O({\tilde{\epsilon}}^{1/2}),$$
which implies that there is a unique solution $\sigma=\sigma(A;\beta,\theta_c=\pi)$ for each $1-\tilde{\epsilon} < A<1$. Since the upper and lower bounds noted earlier imply that $\lim_{A \rightarrow 1^-} \sigma(A; \beta, \theta_c=\pi)=0$, by the implicit function theorem it follows that 
$$
\sigma=\Bigl[\frac{\sin(\beta)+1}{-\cos(\beta)} + O({\tilde{\epsilon}}^{1/2}) \Bigr](1-A).$$ 

	Given the result above, one may conclude that $\sigma=O(1-A)$ within $\Omega_{\tilde{\epsilon}}$. Then, using (\ref{parameters}) and by reevaluating the various contributions in $G_\sigma,\,G_A$, one may obtain the more accurate estimates $$G_\sigma=1+O(1-A),\quad\quad G_A=\frac{\sin(\beta)+1}{-\cos(\beta)}+O(1-A),$$ which imply (\ref{SolutionCurve}).
\end{proof}
\begin{claim}\label{claim3}
If  $\tilde{\epsilon}>0$ is sufficiently small, then the locus of points described by (\ref{SolutionCurve}) satisfies condition $\rm{(IV)}$ within $\Omega_{\tilde{\epsilon}}$.
\end{claim}
\begin{proof}[Proof of Claim \ref{claim3}]
Note that since $\tan(\beta)<0,$ the following would be a sufficient requirement 
$$1-A^2\ge 2\sigma^2=2\Bigl[\frac{\sin(\beta)+1}{\cos(\beta)}\Bigr]^2(1-A)^2+O((1-A)^3).$$ Since for each $\beta\in\left(\frac{\pi}{2},\pi\right)$ we can find small enough $0<\tilde{\epsilon}\ll 1$ such that $0<\sigma(A;\beta,\theta_c=\pi)<\frac{2}{-\cos(\beta)}(1-A)$ for all $A\in(1-\tilde{\epsilon},1)$, and such that $\tilde{\epsilon}<\frac{2\cos^2(\beta)}{\cos^2(\beta)+8}$, we obtain
$$1+A>2-\tilde{\epsilon}>\frac{8}{\cos^2(\beta)}\tilde{\epsilon}> \frac{2\sigma^2}{1-A}.$$ Thus condition (IV) is satisfied by the locus of points given in (\ref{SolutionCurve}) within  $\Omega_{\tilde{\epsilon}}$.
\end{proof}
\end{proof}
\color{black}

\section{Conclusions} While our study  of the steady states is so far not yet exhaustive, we have established a parametric framework which allows for their complete characterization.   Clearly many dynamic questions  lie ahead which are of  concern to materials scientists, such as the role of the size and shape of grains which border on or lie in close proximity to holes \cite{Thompson2012}. In particular, many analytic stability questions can be readily formulated \cite{Kohsaka,Wetton}, such as the stability
of the steady states with respect to non-axi-symmetric perturbations.

\bigskip\par\noindent\textbf{Acknowledgments:} The authors acknowledge the support of the Israel Science Foundation (Grant \#1200/16).

\section{Appendix}
\subsection{Legendre's Integrals}\label{Legendre}

In \S\ref{asymptotic analysis} we made use of various  results regarding \textbf{Legendre elliptic} integrals which are given below. These results can be found, for example, in \cite[\S19.1, \S19.2]{Functions}.

\smallskip
Given an argument $\phi \in \mathbb{C}$ and given a modulus $k\in\mathbb{C}$ such that $1-\sin^2(\phi)\in \mathbb{C}\backslash (-\infty,0]$ and \\$1-k^2\sin^2(\phi)\in \mathbb{C}\backslash (-\infty,0]$, except that one of them may be $0$, then
\begin{equation}
F(\phi,k):=\int_{0}^{\sin(\phi)}{\frac{dt}{\sqrt{1-t^2}\sqrt{1-k^2t^2}}},\quad
E(\phi,k):=\int_{0}^{\sin(\phi)}{\frac{\sqrt{1-k^2t^2}}{\sqrt{1-t^2}}dt,}\label{incomplete}
\end{equation}
correspond to Legendre's incomplete integrals of the first and second kinds, respectively.
Setting $\phi=\frac{\pi}{2}$ in (\ref{incomplete}) yields Legendre's complete integrals of the first and second kinds, respectively,
\begin{equation}
K(k):=F(\frac{\pi}{2},k),\quad
E(k):=E(\frac{\pi}{2},k).\label{complete}
\end{equation}
The more general definition can be seen in \cite[\S19.2]{Functions}.
\smallskip

\smallskip

\smallskip
\color{black} Asymptotic evaluations of the incomplete and complete Legendre elliptic integrals are well known from the literature.
\smallskip
Using \cite[\S2.1, \S2.2, \S2.3]{Expansion}, we can write the asymptotic expansions of $F(\phi,k),\ E(\phi,k)$ for $0<k\ll 1,\ -\frac{\pi}{2}<\phi<\frac{\pi}{2}$ as follows, 
\begin{align}\label{incomplete2}
\begin{array}{l}
F(\phi,k)=\phi+\frac{\phi-\sin(\phi)\cos(\phi)}{4}k^2+O(k^4)\\\\
E(\phi,k)=\phi-\frac{\phi-\sin(\phi)\cos(\phi)}{4}k^2+O(k^4)\\\\
F(\phi,k)-E(\phi,k)=\frac{\phi-\sin(\phi)\cos(\phi)}{2}k^2+O(k^4).
\end{array}
\end{align}
\smallskip
In addition, using \cite[\S19.5.1, \S19.5.2]{Functions}, we obtain 
asymptotic expansions for $0<k\ll 1,$ 
\begin{align}\label{complete2}
\begin{array}{l}
K(k)=\frac{\pi}{2}+\frac{\pi}{8}k^2+O(k^4)\\\\
E(k)=\frac{\pi}{2}-\frac{\pi}{8}k^2+O(k^4)\\\\
K(k)-E(k)=\frac{\pi}{4} k^2+O(k^4).
\end{array}
\end{align}

\bigskip
\subsection{Expressing $z_1(\theta_1)$ in terms of Legendre's Integrals}\label{LegendreZ1}
Now, we shall express $z_1(\theta_1)$ for $\theta_1\in(0,\pi]$ in terms of the Legendre's integrals defined above. Let
\begin{equation}
k:=\frac{1}{\sqrt{1+4\lambda^2 a_\ell^2}},\quad k':=\sqrt{1-k^2}.\label{kk'}
\end{equation}
From (\ref{lambda})--(\ref{al}) and (\ref{kk'})
\begin{equation}
k=\frac{1-A^2-\sigma^2}{\sqrt{(1-A^2-\sigma^2)^2+4(A\,\cos(\beta)+\sigma\sin(\beta))((2A^2+2\sigma^2-1)A\,\cos(\beta)+\sigma\sin(\beta))}},\quad 0<k<1,\label{k}
\end{equation}
\begin{equation}
k'=\frac{\sqrt{4(A\,\cos(\beta)+
		\sigma\sin(\beta))((2A^2+2\sigma^2-1)A\,\cos(\beta)+\sigma\sin(\beta))}}{\sqrt{(1-A^2-\sigma^2)^2
		+4(A\,\cos(\beta)+\sigma\sin(\beta))((2A^2+2\sigma^2-1)A\,\cos(\beta)+\sigma\sin(\beta))}},\quad 0<k'<1.\label{kk}
\end{equation}

By (\ref{solutions}) we obtain for $\theta_1\in(0,\pi]$
\begin{eqnarray}\label{z1thetac}
&z_1(\theta_1)= \bar{z} - \frac{1}{2\lambda} \int_{\bar{\theta}_1}^{\theta_1}{\Biggl[\frac{\sin^2(x)}{\sqrt{\sin^2(x)+4\lambda^2a_\ell^2}}-\sin(x)\Biggr]dx}\\
& =- \frac{1}{2\lambda}\left( \int_{\bar{\theta}_1}^{0}{\Biggl[\frac{\sin^2(x)}{\sqrt{\sin^2(x)+4\lambda^2a_\ell^2}}\Biggr]dx}+\int_{0}^{\theta_1}{\Biggl[\frac{\sin^2(x)}{\sqrt{\sin^2(x)+4\lambda^2a_\ell^2}}\Biggr]dx}+\cos(\theta_1)-\cos(\bar{\theta}_1)-2\lambda \bar{z}\right)\nonumber.
\end{eqnarray}
Noting the definitions of $k$ and $k'$ given in (\ref{kk'}), the definitions of  $F(\phi,k)$, $E(\phi,k)$  given in (\ref{incomplete}), and the definitions of $K(k)$, $E(k)$  given in (\ref{complete}), and recalling that $\bar{\theta}_1\in(-\pi,0)\subset[-\pi,0],$
$$\int_{\bar{\theta}_1}^{0}{\frac{\sin^2(x)}{\sqrt{\sin^2(x)+4\lambda^2a_\ell^2}}\,dx}
\underset{\cos(x)\rightarrow{t}}{=}\int_{\cos(\bar{\theta}_1)}^{1}{\frac{1-t^2}{\sqrt{(1-t^2)^2+(1-t^2)(4\lambda^2a_\ell^2)}}\,dt}=
k\int_{\cos(\bar{\theta}_1)}^{1}{\frac{1-t^2}{\sqrt{(1-t^2)(1-k^2t^2)}}\,dt}$$
$$\quad =-\frac{k'^2}{k}\int_{\cos(\bar{\theta}_1)}^{1}{\frac{dt}{\sqrt{(1-t^2)(1-k^2t^2)}}}+
\frac{1}{k}\int_{\cos(\bar{\theta}_1)}^{1}{\frac{\sqrt{1-k^2t^2}}{\sqrt{1-t^2}}dt}
=\frac{k'^2}{k}\left(F\left(\bar{\theta}_1+\frac{\pi}{2},k\right)-K(k)\right)-\frac{1}{k}\left(E\left(\bar{\theta}_1+\frac{\pi}{2},k\right)-E(k)\right).
$$
Similarly, since $-\theta_1\in[-\pi,0)\subset[-\pi,0],$
$$\int_{0}^{\theta_1}{\frac{\sin^2(x)}{\sqrt{\sin^2(x)+4\lambda^2a_\ell^2}}\,dx} \underset{x\rightarrow{-y}}{=}\int_{-\theta_1}^{0}{\frac{\sin^2(y)}{\sqrt{\sin^2(y)+4\lambda^2a_\ell^2}}\,dy}=\frac{k'^2}{k}\left(F\left(\frac{\pi}{2}-\theta_1,k\right)-K(k)\right)-\frac{1}{k}\left(E\left(\frac{\pi}{2}-\theta_1,k\right)-E(k)\right).$$
Hence (\ref{z1thetac}) implies that for $\theta_1\in(0,\pi]$
\begin{align}
\begin{array}{l}
z_1(\theta_1)=\frac{-1}{2\lambda}\Bigg(\frac{k'^2}{k}\Bigg(F\left(\frac{\pi}{2}-\theta_1,k\right)+F\left(\bar{\theta}_1+\frac{\pi}{2},k\right)-2K(k)\Bigg)\\-\frac{1}{k}\Bigg(E\left(\frac{\pi}{2}-\theta_1,k\right)+E\left(\bar{\theta}_1+\frac{\pi}{2},k\right)-2E(k)\Bigg)+\cos(\theta_1)-\cos(\bar{\theta}_1)-2\lambda \bar{z}\Bigg).\label{GG}
\end{array}
\end{align}
And in particular for $\theta_1=\pi$
\begin{align}
\begin{array}{l}
z_1(\pi)=\frac{-1}{2\lambda}\left(\frac{k'^2}{k}\Bigg(F\left(\bar{\theta}_1+\frac{\pi}{2},k\right)-3K(k)\Bigg)-\frac{1}{k}\Bigg(E\left(\bar{\theta}_1+\frac{\pi}{2},k\right)-3E(k)\Bigg)-1-\cos(\bar{\theta}_1)-2\lambda \bar{z}\right).\label{Gpi}
\end{array}
\end{align}\color{black}


\bibliographystyle{abbrv}
\bibliography{KG_ANC_YV_FINAL}

\end{document}